\documentclass{article}

\usepackage[a4paper,top=2.54cm,bottom=2.54cm,left=3.17cm,right=3.17cm,%
            includehead,includefoot]{geometry}

\usepackage{amsmath,amssymb,amsfonts,amsthm}
\usepackage{mathrsfs}
\usepackage{graphicx}
\usepackage{subfigure}
\usepackage{float}
\usepackage{xcolor}
\usepackage[numbers,square,sort&compress]{natbib}
\usepackage{hyperref}
  \hypersetup{colorlinks,citecolor=blue,linkcolor=blue,breaklinks=true}
\usepackage{booktabs}
\usepackage{colortbl}
\usepackage{caption}
\usepackage{enumitem}
\usepackage{amsmath}
\numberwithin{equation}{section}
\usepackage{epstopdf}
\usepackage{algorithm}
\usepackage{algpseudocode}
\usepackage{array}
\usepackage{bbding}
\usepackage{placeins}
\usepackage{fancyhdr} 
\usepackage{fancyvrb} 
\usepackage{longtable}
\usepackage{listings}
\usepackage{rotating,rotfloat} 
\usepackage{yhmath} 
\newtheorem{conjecture}{Conjecture}

\usepackage{fancyhdr}
\allowdisplaybreaks

\newtheorem{theorem}{Theorem}[section]
\newtheorem{lemma}{Lemma}[section]
\newtheorem{corollary}[theorem]{Corollary}
\newtheorem{proposition}[theorem]{Proposition}
\newtheorem{definition}{Definition}[section]
\newtheorem{remark}{Remark}[section]

\newcommand{\bbm}{\begin{bmatrix}}
\newcommand{\ebm}{\end{bmatrix}}

\begin{document}

	\title{Well-posedness and asymptotic behavior of solutions to a second order nonlocal parabolic MEMS equation}

 \author{Yufei Wei\thanks{School of Mathematical Sciences, East China Normal University, Shanghai 200241,   P.R. China. Email: \texttt{51255500047@stu.ecnu.edu.cn}. },\,
 Yanyan Zhang\thanks{Corresponding author. School of Mathematical Sciences,  Key Laboratory of MEA(Ministry of Education) and Shanghai Key Laboratory of PMMP,  East China Normal University, Shanghai 200241, China. Email: \texttt{yyzhang@math.ecnu.edu.cn}. Y. Zhang is sponsored by
  NSFC [No.12271505] and
  STCSM  [No.22DZ2229014].}}
\date{}

\maketitle
\begin{abstract}
We consider a second-order nonlocal parabolic MEMS equation with Dirichlet boundary conditions:
\[
u_t-\Delta u=\frac{\lambda}{(1-u)^2\bigl(1+\int_\Omega\frac{1}{1-u}\,dx\bigr)^2},\quad x\in\Omega,\ t>0,
\]
where \(\Omega\subset\mathbb{R}^N\) \((1\le N\le3)\) is a bounded smooth domain and \(\lambda>0\). Using operator semigroups and the contraction mapping principle, we prove local existence and give a quenching criterion. Under suitable smallness conditions on \(\lambda\) and the initial data, global existence and exponential convergence to the minimal steady state are obtained. Assuming the global solution stays uniformly away from the singularity \(u=1\), we show that the system forms a gradient system. By establishing analyticity of the energy and a Lojasiewicz--Simon inequality, we prove that the solution converges to a steady state with either exponential or algebraic rate depending on the Lojasiewicz exponent. Numerical experiments in 1D and 2D illustrate the results and support conjectures on the \(\lambda\)-dichotomy.

\noindent\textbf{Keywords:} MEMS equation; nonlocal parabolic equation; global existence; asymptotic behavior; Lojasiewicz–Simon inequality; convergence rate; numerical simulation.
\end{abstract}

\noindent{\it Mathematics Subject Classification (2020):} 35B40, 35K55, 35A01, 35B44, 35R09.
\section{Introduction}

\subsection{Research background}

\qquad Micro-Electro-Mechanical Systems (MEMS) are devices that integrate miniature mechanical structures, micro-sensors, micro-actuators, electronic circuits, and other devices. Typically ranging in size from micrometers to millimeters, MEMS offer advantages such as compact dimensions, low energy consumption, high integration, and high sensitivity.

The development history of MEMS can be traced back to the 1950s. In 1959, physicist Richard Feynman proposed the possibility of manipulating and controlling objects at a minuscule scale. In 1964, Westinghouse produced the first batch of mass-produced MEMS devices, known as resonant gate transistors. In 1982, Kurt Petersen published a seminal paper titled "Silicon as a Mechanical Material", which extensively discussed the material properties of silicon and etching data, greatly advancing the field of MEMS.

Today, amid the rapid growth of digitalization, the Internet of Things (IoT), and artificial intelligence (AI) technologies, MEMS is facing unprecedented opportunities for development. MEMS plays an indispensable role in cutting-edge fields such as industrial inspection, autonomous driving, and smart healthcare. Investigating the mathematical models of MEMS will facilitate an in-depth exploration of their properties, provide a robust theoretical foundation for MEMS advancement, and propel MEMS-related technologies toward more advanced levels.

\begin{figure}[H]
    \centering
    \includegraphics[width=10cm,height=6cm]{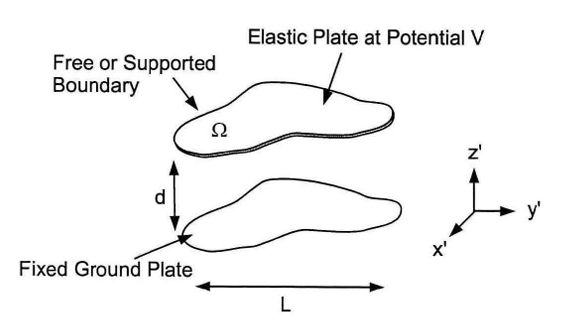}
    \caption{MEMS device(\cite{ref7})}\label{tu1}
\end{figure}

A simple MEMS device is shown in the Figure \ref{tu1}. The upper part consists of an elastic membrane coated with a metallic conductor, while the bottom features a parallel rigid plate. When a voltage is applied, the membrane deflects toward the bottom due to electrostatic forces. When the applied voltage exceeds a certain critical value, the elastic membrane is pulled in and touches the bottom plate, tneh the system loses its stable state. This phenomenon is called quenching.

The mathematical model describing this physical process is given by (\cite{ref7}):
\begin{equation*}
    \begin{cases}
        \rho A u_{tt}+a u_t=T \Delta u-B \Delta^{2} u-\frac{C^{2} V^{2}}{(d+u)^{2}} ,\quad&   x\in \Omega,\ t>0,\\
        -1<u(x, t) \le 0 ,\quad&  x\in \Omega,\ t>0,\\
        u(x, t)=B \frac{\partial u}{\partial \eta}(x, t)=0 ,\quad &  x\in \partial \Omega,\ t>0,\\
        u(x, 0)=u_0 ,\quad u_{t}(x, 0) \ge 0,\quad &  x \in \Omega,
    \end{cases}
\end{equation*}
where $\Omega\subset\mathbb{R}^{N}$ is a bounded domain with smooth boundary, $\rho$ is the membrane density, $A$ is the membrane thickness, $a$ is the damping coefficient, $T$ is the tension constant, $B$ represents the bending energy, $C$ and $V$ denote capacitance and voltage respectively, $d$ is the distance between the membrane and the bottom plate, and $u$ represents the membrane displacement.

\begin{figure}[H]
    \centering
    \includegraphics[width=10cm,height=6cm]{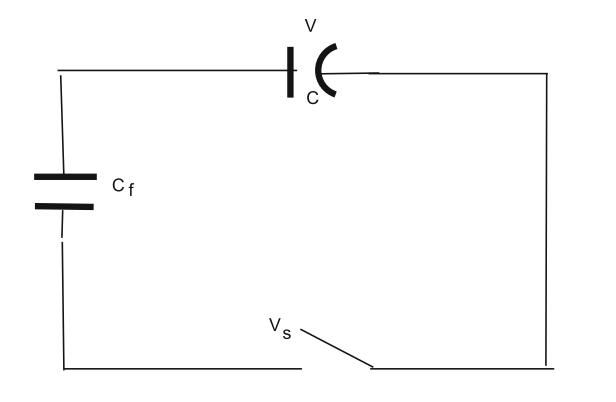}
    \caption{capacitive control circuit (\cite{ref15})}
    \label{tu2}
\end{figure}

In 1997, in order to improve the stability of the device, Seeger and Crary proposed adding a series capacitor to the circuit containing the MEMS device to control the voltage (as shown in Figure \ref{tu2}). According to Kirchhoff's law, we have
$$V=\frac{V_s}{1+\frac{C}{C_f}},$$
where $V$ is the voltage across the MEMS device, $V_s$ is the supply voltage, $C$ is the capacitance of the MEMS device, and $C_f$ is the capacitance of the fixed series capacitor. Note that the capacitance $C$ of the MEMS device increases as the gap between the membrane and the bottom plate decreases, which in turn causes the voltage $V$ to decrease, thereby enhancing system stability. In this case, the equation is modified as follows:
$$
 \rho A u_{tt}+a u_t=T \Delta u-B \Delta^{2} u-\frac{\lambda}{(d+u)^{2}\left(1+\alpha \int_{\Omega} \frac{1}{d+u} d x\right)^{2}},
$$
where $\lambda = C^2 V_s^2$ is the voltage parameter. Note that due to the addition of the series capacitor, the nonlocal term $\frac{1}{(1+\alpha\int_{\Omega}\frac{1}{d+u}\,\mathrm{d}x)^2}$ appears in the equation, which depends on the global value of $u$.

To simplify the problem, we introduce dimensionless variables and adjust parameters to normalize the elastic coefficient $T$, damping coefficient $a$, and gap distance $d$. By neglecting membrane rigidity and thickness (i.e. $A = B = 0$), we obtain the equation:
\begin{equation}
    \noindent
    \begin{cases}
        u_{t}-\Delta u=\frac{\lambda}{(1-u)^2(1+\alpha\int_{\Omega}\frac{1}{1-u}\,\mathrm{d}x)^2},&x\in\Omega,t>0\\
        u=0,&x\in\partial\Omega,t>0\\
        u(x,0)=u_0(x),&x\in\Omega
    \end{cases}\label{1.1}
\end{equation}\\
where $\lambda,\alpha>0$,\ $\Omega\subset\mathbb{R}^{N}$. Without loss of generality, we assume $\alpha = 1$. Equation (\ref{1.1}) is the second order nonlocal parabolic MEMS equation studied in this paper.

If $\alpha = 0$, it becomes the second order local parabolic MEMS equation:
\begin{equation}
    \begin{cases}
        u_{t}-\Delta u=\frac{\mu}{(1-u)^2},&x\in\Omega,t>0\\
        u=0,&x\in\partial\Omega,t>0\\
        u(x,0)=u_0(x),&x\in\Omega
    \end{cases}\label{1.2}
\end{equation}

This paper will study the well-posedness and asymptotic behavior of solutions to equation (\ref{1.1}) through contraction mapping principle, operator semigroup theory, gradient system theory, analyticity theory, and the Lojasiewicz-Simon method, with intuitive demonstrations provided by numerical experiments.

\subsection{Research status}

\qquad At present, for local parabolic MEMS equation (\ref{1.2}) and its corresponding steady-state equation:
\begin{equation}
    \begin{cases}
        -\Delta \phi=\frac{\mu}{(1-\phi)^2},&x\in\Omega\\
        0 \le \phi<1,&x\in\Omega\\
        \phi=0,&x\in\partial\Omega
    \end{cases}\label{1.3}
\end{equation}
the structure and asymptotic behavior of the solution are relatively clear (\cite{ref7}\cite{ref12}\cite{ref15}). For example, the conclusion in \cite{ref7} is that: for $u_0=0$, there is a critical voltage $\mu^{*}>0$ such that when $\mu<\mu^{*}$, equation (\ref{1.2}) has a unique solution $u(x,t)$ and $u$ converges monotonically increasing to the minimal classical solution of the steady-state equation (\ref{1.3}) as $t\to +\infty$; when $\mu>\mu^{*}$, the solution of equation (\ref{1.2}) quenches in finite time, and the steady-state equation (\ref{1.3}) has no solution.

For nonlocal parabolic MEMS equation (\ref{1.1}) and its corresponding steady-state equation:
\begin{equation}
    \begin{cases}
        -\Delta \phi=\frac{\lambda}{(1-\phi)^2(1+\int_{\Omega}\frac{1}{1-\phi}\,\mathrm{d}x)^2},&x\in\Omega\\
        0 \le \phi<1,&x\in\Omega\\
        \phi=0,&x\in\partial\Omega
    \end{cases}\label{1.4}
\end{equation}
in the one-dimensional case ($N=1$, $\Omega=[-1,1]$), reference \cite{ref1} concludes by ODE methods that: there exists $\lambda^{*} >0$ such that when $\lambda<\lambda^{*}$, the steady-state equation (\ref{1.4}) has two solutions; when $\lambda=\lambda^{*}$, (\ref{1.4}) has a unique solution; and when $\lambda>\lambda^{*}$, (\ref{1.4}) has no solution. It also states that: when $0<\lambda<9\mu^{*}$ ($\mu^{*}$ is the critical voltage for the local equation) and $u_0=0$, equation (\ref{1.1}) has a global solution $u$, which converges to the minimal classical solution of the steady-state equation (\ref{1.4}) as $t\to +\infty$.

Reference \cite{ref2} continues the research of \cite{ref1} in higher dimensions ($N\ge 2$). Using the lower-upper solution method, it proves the local existence and uniqueness of solutions to equation (\ref{1.1}) and obtains the following results: when $0<\lambda<(1+|\Omega|)^2\mu^{*}$ ($\mu^{*}$ is the critical voltage for the local equation), the steady-state equation (\ref{1.4}) has classical solutions, and let $\omega_{\lambda}$ denote the minimal classical solution among them, if $u_0\le \omega_{\lambda}$, equation (\ref{1.1}) has a global solution $u$ that converges to $\omega_{\lambda}$ as $t\to +\infty$. Additionally, reference \cite{ref2} obtains some results concerning quenching: assuming $|\Omega|\le \frac{1}{2}$, then for any given $\lambda>0$, there exists initial data $u_0$ such that the solution of equation (\ref{1.1}) quenches in finite time. Reference \cite{ref12} proves that when $|\Omega|<1$ and $u_0=0$, there exists $\lambda$ sufficiently large such that the solution of equation (\ref{1.1}) quenches in finite time.

Particularly, when $\Omega=B_1(0)\subset\mathbb{R}^{N}$, the solutions of the steady-state equation (\ref{1.4}) are radially symmetric. Reference \cite{ref2} studies the solution structure of the steady-state equation (\ref{1.4}) in this case. Reference \cite{ref5} proves: for radially symmetric and monotonically decreasing initial data $u_0(r)$, when $\lambda>\lambda^{*}$, the solution of equation (\ref{1.1}) quenches in finite time and touches 1 only at the point $r=0$, where $\lambda^{*}=\sup\{\lambda>0\mid \text{equation (\ref{1.4}) has a classical solution}\}$.

In fact, for the second order local MEMS equation, the comparison principle holds, leading to favorable properties. However, for the second order nonlocal MEMS equation, the comparison principle fails, which creates difficulties in the analysis. Some conclusions in the aforementioned literature are established by connecting to the local equation by $\mu=\frac{\lambda}{(1+\int_{\Omega}\frac{1}{1-u}\,\mathrm{d}x)^2}$, but this only allows for a rough estimate of the range of $\lambda$. Therefore, this paper attempts to study the nonlocal MEMS equation using the Lojasiewicz-Simon method, which does not rely on the comparison principle.

References \cite{ref13}, \cite{ref9}, \cite{ref16}, \cite{ref8} all introduce the main content of the Lojasiewicz-Simon method and apply it to different equations. For example, reference \cite{ref16} studies the nonlocal biharmonic equation with Navier boundary conditions
$$u_{tt}+u_{t}+\Delta^2 u=\left(\beta \int_{\Omega}|\nabla u|^2\,\mathrm{d}x+\gamma\right)\Delta u+\frac{\lambda}{(1-u)^\sigma\left(1+\alpha\int_{\Omega}\dfrac{1}{(1-u)^{\sigma-1}}\,\mathrm{d}x\right)^\sigma},$$
proving that when the initial data and initial energy satisfy certain conditions, the equation has a global solution $u$ in $H^2\cap H_0^1$, and uses the Lojasiewicz-Simon method to prove that the global solution $u$ converges to a steady-state solution in $H^2\cap H_0^1$.

Currently, the well-posedness and convergence problem for solutions of the second order nonlocal parabolic MEMS equation (\ref{1.1}) remains unresolved. This paper will first construct a suitable function space, and with the help of operator semigroup theory and contraction mapping principle, we will obtain the local existence of the solution to equation (\ref{1.1}) in $H^2\cap H_0^1$, and obtain the global existence of the solution to equation (\ref{1.1}) under certain conditions. Then, based on the establishment of local existence, this paper will use the Lojasiewicz-Simon method to prove that: if a global solution exists and this global solution maintains a uniform distance from 1 with respect to time $t$, then the global solution converges to a steady-state solution in $H^2\cap H_0^1$.

\subsection{Main conclusion}

\qquad This paper considers the second order nonlocal parabolic MEMS equation (\ref{1.1}) with Dirichlet boundary conditions. First, we establish local existence of solutions using operator semigroup theory and contraction mapping principle.
\begin{theorem}\label{local existence}
Let $\Omega\subset\mathbb{R}^{N}$ and $u_0 \in H^{2}\cap H_{0}^{1}(\Omega)$, $\|u_{0}\|_{L^{\infty}(\Omega)} \le 1-2 \delta$, where $\delta \in\left(0, \frac{1}{2}\right)$, and $u_0$ is continuous on $\bar{\Omega}$. Then for any $\lambda>0$, there exists $\widetilde{T}>0$ depending on $\Omega$, $\delta$, and $\lambda$ such that when $0<T< \widetilde{T}$, equation (\ref{1.1}) has a unique solution $u(t, x)$ satisfying
$$
u \in C\left([0, T]; H^{2} \cap H_{0}^{1}(\Omega)\right) \cap C^{1}\left([0, T]; L^{2}(\Omega)\right).
$$
Moreover, there exists $T^{*}>0$ such that $[0,T^{*})$ is the maximal existence interval, and either $T^{*}=\infty$, or $T^{*}<\infty$ with
$$
\sup_{(t, x) \in [0, T^{*}) \times \Omega} u(t, x)=1.
$$
\label{local soultion}
\end{theorem}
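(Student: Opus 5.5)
We will treat \eqref{1.1} as an abstract semilinear evolution equation $u_t+Au=F(u)$ in $X=L^2(\Omega)$. Here $A=-\Delta$ with domain $D(A)=H^2\cap H_0^1(\Omega)$, and the nonlinearity is
\[
F(u)=\frac{\lambda}{(1-u)^2\bigl(1+\int_\Omega\frac{1}{1-u}\,dx\bigr)^2}.
\]
The operator $-A$ generates an analytic contraction semigroup $e^{-tA}$ on $L^2$, and its restriction to $D(A)$, equipped with the graph norm $\|Au\|_{L^2}$, is again a $C_0$ semigroup. Since $1\le N\le 3$, we have the embedding $H^2\cap H_0^1\hookrightarrow C(\bar\Omega)$ with $\|v\|_{L^\infty}\le C_\Omega\|Av\|_{L^2}$. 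This embedding is what lets us control the singularity pointwise.

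\textbf{Function space and contraction.} For $M\ge 2\|Au_0\|_{L^2}$ we set
\[
\mathcal{K}_T=\bigl\{u\in C([0,T];H^2\cap H_0^1):\ \|u\|_{L^\infty((0,T)\times\Omega)}\le 1-\delta,\ \sup_t\|Au(t)\|_{L^2}\le M\bigr\},
\]
with the metric of $C([0,T];H^2\cap H_0^1)$. On $\mathcal K_T$ we define the mild-solution map
\[
\Phi(u)(t)=e^{-tA}u_0+\int_0^te^{-(t-s)A}F(u(s))\,ds .
\]
On $\{|u|\le1-\delta\}$ we have $1-u\ge\delta$ and $1+\int_\Omega\frac{1}{1-u}\ge 1$. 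Hence $F(u)$ is bounded by $\lambda\delta^{-2}$, and $F$ is Lipschitz in $L^\infty$ with constant $C(\lambda,\delta,|\Omega|)$, with the nonlocal factor handled by the mean value theorem. The weakness is that $F(u)$ only lies in $L^2$ (in fact in $H^1$), not in $H^2$, so the Duhamel integral cannot be bounded naively in $D(A)$. This is the main obstacle.

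\textbf{The main obstacle: smoothing of the Duhamel term.} I will handle it with the analytic-semigroup estimate $\|Ae^{-tA}\|\le C/t$, which alone is not integrable, together with extra regularity of $F(u)$. I would use $F(u)\in H^1$, with
\[
\|\nabla F(u)\|_{L^2}\le 2\lambda\delta^{-3}\|\nabla u\|_{L^2}\le C\,M,
\]
and then the fractional bound
\[
\|Ae^{-tA}f\|_{L^2}=\|A^{1/2}e^{-tA}A^{1/2}f\|_{L^2}\le Ct^{-1/2}\|f\|_{H_0^1}.
\]
There is a boundary issue, since $F(u)\ne0$ on $\partial\Omega$, so $F(u)\notin H_0^1$. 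I would fix it by using $D(A^{\theta})=H^{2\theta}$ for $\theta<1/4$, which gives $\|Ae^{-tA}f\|\le Ct^{-1+\theta}\|f\|_{H^{2\theta}}$. This is integrable, and it yields
\[
\Bigl\|A\int_0^te^{-(t-s)A}F\,ds\Bigr\|_{L^2}\le C T^{\theta}\sup_s\|F(u(s))\|_{H^{2\theta}} .
\]
The difference $F(u)-F(v)$ is estimated in $H^{2\theta}$ (or in $H^1$) by $C\|u-v\|_{H^2}$, using the $L^\infty$ control. The $L^\infty$ constraint is preserved for small $T$ because:
\begin{itemize}
\item $\|e^{-tA}u_0-u_0\|_{L^\infty}\le C\|A(e^{-tA}u_0-u_0)\|_{L^2}\to0$, by strong continuity on $D(A)$, and this accounts for the dependence on $u_0$;
\item the Duhamel term is $O(T^\theta)$ in $H^2$, hence also in $L^\infty$.
\end{itemize}
Choosing $\widetilde T$ small, $\Phi$ is a contraction of $\mathcal K_T$, which gives a unique fixed point.

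\textbf{Regularity, uniqueness and continuation.} Hölder continuity of $t\mapsto F(u(t))$ in $L^2$, which follows from the Lipschitz bound, combined with standard analytic semigroup theory (Pazy) upgrades the mild solution to a strong solution in $C([0,T];D(A))\cap C^1([0,T];L^2)$. Uniqueness in that class follows from Gronwall applied in $L^2$ as long as $u$ stays below $1$. For the maximal interval, we define $T^*$ as the supremum of existence times and restart the construction from $u(t_0)$. If $T^*<\infty$ and $\sup u\le 1-2\delta'<1$, then the existence time depends only on $\delta'$, $\lambda$ and a bound on $\|Au\|$. A Gronwall argument with the same $t^{-1+\theta}$ kernel shows that $\|Au(t)\|$ stays bounded on $[0,T^*)$ whenever $u\le 1-\delta'$. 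The lower bound $u\ge0$ when $u_0\ge 0$, or more generally $u\ge-C$ since $F>0$ and the minimum principle applies, keeps the $L^\infty$ control two-sided. So the solution extends past $T^*$, a contradiction; this yields the dichotomy $\sup_{[0,T^*)\times\Omega}u=1$.
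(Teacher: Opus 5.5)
Your argument is correct in substance, but it takes a genuinely different route from the paper.

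\textbf{The paper's route.} The paper truncates the nonlinearity ($g_\delta$, $f_\delta$) so that it becomes bounded and globally Lipschitz on $L^2$. It then runs the contraction in the crude space $C([0,T];L^2(\Omega))$, with a time $T_0$ depending only on $\lambda,\delta,|\Omega|$ and no smoothing estimates. Next it quotes Lemma \ref{classic exists 2} to upgrade the solution to $C([0,T];H^2\cap H_0^1)\cap C^1([0,T];L^2)$. Finally it uses $H^2\hookrightarrow L^\infty$ and continuity in $t$ to see that the truncation is inactive for $t<T_1$. The maximal interval comes from Zorn's lemma plus a restart-and-glue argument.

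\textbf{Your route.} You put the constraint $|u|\le 1-\delta$ into the fixed-point set and contract directly in $C([0,T];D(A))$. The price is analytic smoothing and the identification $D(A^\theta)=H^{2\theta}$ for $\theta<1/4$. Your observation that $F(u)\notin H_0^1$, because its trace is $\lambda(1+\int_\Omega\frac{1}{1-u}\,dx)^{-2}\neq0$, is exactly the right subtlety. What this buys you:
\begin{itemize}
\item The $H^2$-regularity and the pointwise bound come out of the contraction itself. The paper's use of Lemma \ref{classic exists 2} is shaky, since $f_\delta$ is not $C^1$ on $L^2$ (its profile has a kink at $u=1-\delta$). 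The H\"older-forcing theorem for analytic semigroups that you invoke is the tool actually needed there.
\item You get an explicit continuation criterion in terms of $\|Au\|_{L^2}$ and $\sup u$, which your singular Gronwall argument closes. It is even simpler to use the energy identity, which gives $\frac12\|\nabla u(t)\|_{L^2}^2\le E(u_0)$.
\item You get uniqueness in the full solution class by $L^2$-Gronwall.
\end{itemize}

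\textbf{A step that needs repair (the paper has the same defect).} Your $\widetilde T$ depends on $u_0$, through $M$ and through the modulus of continuity of $t\mapsto e^{-tA}Au_0$. The statement, however, claims dependence on $\Omega,\delta,\lambda$ only. The paper's $T_1$ has the same problem, and so does its restart time $T_3$, which it asserts depends only on $\sigma,\lambda,\Omega$.

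In particular, your claim that the restart time depends only on $\delta'$, $\lambda$ and a bound on $\|Au\|$ does not follow from strong continuity on $D(A)$. There are two fixes:
\begin{itemize}
\item Use $\|(e^{-tA}-I)v\|_{L^\infty}\le C\|(e^{-tA}-I)A^{\beta}v\|_{L^2}\le Ct^{1-\beta}\|Av\|_{L^2}$ with $N/4<\beta<1$.
\item More simply, use positivity. Since $0<F(u)\le\lambda\delta^{-2}$ whenever $u\le 1-\delta$, and the Dirichlet heat semigroup is positive with $e^{-tA}1\le 1$, you get $-(1-2\delta)\le\Phi(u)(t)\le 1-2\delta+\lambda\delta^{-2}t$. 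Hence the $L^\infty$ constraint is preserved for $T\le\delta^3/\lambda$, uniformly in $u_0$.
\end{itemize}
Apply the same bound to the maximal solution, bootstrapping $u\le 1-2\delta+\lambda\delta^{-2}t$. Together with your blow-up alternative this gives $T^*>\delta^3/\lambda$. So $\widetilde T=\delta^3/\lambda$ works, and the dependence stated in the theorem is recovered.

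\textbf{A small correction.} Choose $M>\|Au_0\|_{L^2}$ strictly, e.g.\ $M=2\|Au_0\|_{L^2}+1$. For $u_0=0$ your condition allows $M=0$, and then $\mathcal K_T=\{0\}$ is not invariant.
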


We will utilize the quenching results obtained in Theorem \ref{local existence}, through a series of estimates, further establish the global existence of solutions under certain conditions.

\begin{theorem}\label{global existence}
Let $\Omega\subset\mathbb{R}^{N}$. Assume $u_{0}\in H^{2}\cap H_{0}^{1}(\Omega)$ is continuous on $\bar{\Omega}$ and satisfies: for some $\delta \in\left(0, \frac{1}{2}\right)$,
\begin{equation}
0 \le u_{0}(x) \le 1-2 \delta ,\quad 0 \le w_{\lambda}(x) \le 1-2 \delta,\label{029}
\end{equation}
where $w_{\lambda}$ is the minimal steady-state solution of (\ref{1.4}). Then there exist positive constants $\theta$ and $\lambda_{0}$ depending on $\Omega$, $N$, $\delta$ such that if additionally
\begin{equation}
\left\|u_{0}-w_{\lambda}\right\|_{H^{2}(\Omega)} \le \theta,\quad \lambda \le \lambda_{0} , \label{030}
\end{equation}
equation (\ref{1.1}) has a unique global solution satisfying:
\begin{equation}
u \in C\left([0, \infty), H^{2}(\Omega) \cap H_{0}^{1}(\Omega)\right) \cap C^{1}\left([0, \infty), L^{2}(\Omega)\right),\label{031}
\end{equation}
and there exists $\alpha > 0$ depending on $\Omega$, $N$, $\delta$ such that:
\begin{equation}
\left\|u(t)-w_{\lambda}\right\|_{H^{2}(\Omega)} \le C e^{-\alpha t}.\label{032}
\end{equation}
\end{theorem}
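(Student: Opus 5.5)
We write $u=w_\lambda+v$ and work with the perturbation $v$. Subtracting the steady-state equation (\ref{1.4}) from (\ref{1.1}) gives
\[
v_t-\Delta v=F(w_\lambda+v)-F(w_\lambda),\qquad v|_{\partial\Omega}=0,\qquad v(0)=u_0-w_\lambda,
\]
where $F(u)=\lambda(1-u)^{-2}\bigl(1+\int_\Omega\frac{dx}{1-u}\bigr)^{-2}$. By Theorem \ref{local existence}, applied with $\|u_0\|_{L^\infty}\le 1-2\delta$, there is a maximal solution on $[0,T^*)$. The strategy is a continuation (bootstrap) argument. Set
\[
T_1=\sup\{t<T^*:\ \|v(s)\|_{L^\infty}\le\delta\ \text{for } s\in[0,t]\}.
\]
On $[0,T_1)$ we have $u\le 1-2\delta+\delta=1-\delta$, so $F$ and its Fréchet derivative are uniformly controlled. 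Since $N\le3$, the embedding $H^2\hookrightarrow C(\bar\Omega)$ holds with constant $C_S$. The aim is therefore an exponentially decaying $H^2$ bound on $v$, which forces $\|v\|_{L^\infty}<\delta$, hence $T_1=T^*$. The quenching alternative in Theorem \ref{local existence} then gives $T^*=\infty$.

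\textbf{Linearization and smallness of $\lambda$.} For $\|u\|_{L^\infty},\|w_\lambda\|_{L^\infty}\le 1-\delta$, the mean value theorem gives
\[
F(w_\lambda+v)-F(w_\lambda)=\lambda\bigl(a(x,t)v+b(t)\textstyle\int_\Omega m(x,t)\,v\,dx\bigr),
\]
with $|a|,|b|,|m|\le C(\delta,\Omega)$: the local part has derivative $2(1-u)^{-3}(\dots)^{-2}$, and the nonlocal factor contributes a term bounded by $C\int|v|$. Consequently
\[
\|F(u)-F(w_\lambda)\|_{L^2}\le \lambda K_\delta\|v\|_{L^2}.
\]
Choose $\lambda_0$ with $\lambda_0K_\delta\le \lambda_1/2$, where $\lambda_1$ is the first Dirichlet eigenvalue. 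Testing the $v$-equation with $v$ yields
\[
\tfrac12\tfrac{d}{dt}\|v\|_2^2+\lambda_1\|v\|_2^2\le \tfrac{\lambda_1}{2}\|v\|_2^2,
\]
so $\|v\|_2$ decays like $e^{-\lambda_1t/2}$. Testing with $-\Delta v$ gives the same decay for $\|\nabla v\|_2$.

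\textbf{Lifting to $H^2$.} Testing with $\Delta^2 v$ requires $F(u)-F(w_\lambda)\in H_0^1$, which is awkward. Instead, differentiate the equation in time, following the regularity class $C^1([0,T];L^2)$ of Theorem \ref{local existence}, justified by difference quotients. Then $z=v_t$ satisfies
\[
z_t-\Delta z=DF(u)z,\qquad \|DF(u)z\|_2\le\lambda K_\delta\|z\|_2,
\]
so $\|z(t)\|_2\le\|z(0)\|_2e^{-\lambda_1t/2}$. Here
\[
\|z(0)\|_2=\|\Delta(u_0-w_\lambda)+F(u_0)-F(w_\lambda)\|_2\le C\theta.
\]
Elliptic regularity, $\|v\|_{H^2}\le C(\|\Delta v\|_2)$ with $\Delta v=v_t-(F(u)-F(w_\lambda))$, gives
\[
\|v(t)\|_{H^2}\le C_1\theta e^{-\lambda_1t/2}.
\]
Choosing $\theta$ with $C_SC_1\theta<\delta$ closes the bootstrap, and we get (\ref{032}) with $\alpha=\lambda_1/2$.

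\textbf{Main obstacle.} The main difficulty is making the $H^2$ decay rigorous. This requires justifying the time-differentiated equation with only $H^2\cap H^1_0$ data, for which I would use difference quotients $(v(t+h)-v(t))/h$ and the same energy estimate. It also requires checking that the nonlocal term's derivative stays bounded uniformly in $\lambda\le\lambda_0$, with all constants depending only on $\Omega,N,\delta$. Uniqueness is inherited from Theorem \ref{local existence}.
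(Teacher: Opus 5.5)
Your proposal is correct and follows essentially the same route as the paper. Both write $u=w_\lambda+v$, take $\lambda$ small so that the Lipschitz nonlinear difference is absorbed by the dissipation in energy estimates for $v$ and $v_t$, recover $\|v\|_{H^2}$ from $\Delta v=v_t-(F(u)-F(w_\lambda))$, and close with $H^2\hookrightarrow L^\infty$ (so $N\le 3$, which the paper also uses implicitly) together with the quenching alternative of Theorem \ref{local existence}. The differences are only technical. You run a continuity argument on the untruncated equation, whereas the paper uses the truncated nonlinearity $f_\delta$ and then ``repeats the process''; your version makes the continuation step cleaner. Your difference-quotient justification of the $v_t$ estimate is also more careful than the paper's formal time differentiation.
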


Based on the local existence of solutions, this paper assumes that equation (\ref{1.1}) has a unique global solution $u$ satisfying
\begin{equation}
    \begin{aligned}
        u\in C([0,+\infty),H^2\cap H_0^1(\Omega))\cap C^1([0,+\infty),L^2(\Omega)),\quad\ \ \\
        \|u\|_{L^\infty}\le 1-\delta,\ \ \forall t\ge 0,\ \ \ \  \text{where}\ \delta\in(0,1)\ \text{depends on}\ \lambda,\ u_0,\ \Omega.
    \end{aligned}\label{u}
\end{equation}
and studies the asymptotic behavior of this global solution.

\begin{theorem}\label{theorem1.1}
  Let $\Omega\subset\mathbb{R}^{N}\ (1\le N\le 3)$ be a bounded domain with smooth boundary. For any $\lambda>0$ and initial data $u_0\in H^2\cap H_0^1(\Omega)$, if equation (\ref{1.1}) has a global solution $u\in C([0,+\infty),H^2\cap H_0^1(\Omega))\cap C^1([0,+\infty),L^2(\Omega))$ satisfying $\|u\|_{L^\infty}\le 1-\delta$, $\forall \ t\ge 0$, where $\delta\in (0,1)$, then there exists $\psi\in C^\infty$ satisfying
  \begin{equation}
      \begin{cases}
          -\Delta\psi=\frac{\lambda}{(1-\psi)^2(1+\int_{\Omega}\frac{1}{1-\psi}\,\mathrm{d}x)^2},\ x\in\Omega \\
          \psi |_{\partial\Omega}=0 \label{1.5}
      \end{cases}
  \end{equation}
  such that
  \begin{equation}
      \lim_{t \to +\infty} \|u(\cdot,t)-\psi\|_{H^2}=0. \label{1.6}
  \end{equation}
\end{theorem}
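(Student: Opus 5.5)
The proof follows the Lojasiewicz--Simon scheme. The first step is to exhibit the gradient structure. Define on $V=\{v\in H^2\cap H_0^1(\Omega):\|v\|_{L^\infty}<1\}$ the energy
\[
E(v)=\frac12\int_\Omega|\nabla v|^2\,\mathrm{d}x-\lambda\Bigl(1-\frac{1}{1+\int_\Omega\frac{1}{1-v}\,\mathrm{d}x}\Bigr).
\]
Since $\frac{d}{ds}\bigl(-(1+s)^{-1}\bigr)=(1+s)^{-2}$ and $\frac{d}{dv}\frac{1}{1-v}=\frac{1}{(1-v)^2}$, the derivative is
\[
E'(v)=-\Delta v-\frac{\lambda}{(1-v)^2\bigl(1+\int_\Omega\frac{1}{1-v}\,\mathrm{d}x\bigr)^2}.
\]
Hence \eqref{1.1} reads $u_t=-E'(u)$. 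Using the regularity $u\in C^1([0,\infty);L^2)$ (with a standard regularization if needed), we get
\[
\frac{d}{dt}E(u(t))=-\|u_t\|_{L^2}^2 .
\]
Because $\|u\|_{L^\infty}\le 1-\delta$, the function $E(u(t))$ is bounded below, so $\int_0^\infty\|u_t\|_{L^2}^2\,\mathrm{d}t<\infty$.

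The second step is compactness of the orbit. We need a uniform $H^2$ bound, ideally in a slightly stronger norm such as $H^{2+\epsilon}$ or via a bound on $u_t$. The nonlinearity $f(u)=\lambda(1-u)^{-2}(1+\int_\Omega\frac{1}{1-u}\,\mathrm{d}x)^{-2}$ is bounded in $L^\infty$ by $\lambda\delta^{-2}$ and is smooth in $u$ on $\{|u|\le1-\delta\}$. Differentiating the equation in time, $w=u_t$ satisfies a linear parabolic equation
\[
w_t-\Delta w=f'(u)\,w+(\text{nonlocal term linear in }w),
\]
with bounded coefficients. Energy estimates on unit time intervals, combined with $\int_0^\infty\|u_t\|^2<\infty$ (uniform Gronwall), give $\sup_{t\ge1}\|u_t\|_{H^1}<\infty$ and $\|u_t(t)\|_{L^2}\to0$. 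Elliptic regularity applied to $-\Delta u=f(u)-u_t$ then gives boundedness of $u$ in $H^3$ for $t\ge1$, hence precompactness in $H^2$. The $\omega$-limit set $\omega(u_0)$ is therefore nonempty, compact, and connected in $H^2$, and $E$ is constant on it. Passing to the limit in the equation with $u_t\to0$ shows that every $\psi\in\omega(u_0)$ solves \eqref{1.5} with $\|\psi\|_{L^\infty}\le1-\delta$. Bootstrapping, using that $f(\psi)$ is smooth whenever $\psi$ is, gives $\psi\in C^\infty$.

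The third step is the Lojasiewicz--Simon inequality near each $\psi\in\omega(u_0)$: there exist $\theta\in(0,\frac12]$ and $\sigma,C>0$ such that
\[
|E(v)-E(\psi)|^{1-\theta}\le C\|E'(v)\|_{L^2}\qquad\text{whenever }\|v-\psi\|_{H^2}<\sigma .
\]
To prove it, I will verify that $E:U\subset H^2\cap H_0^1\to\mathbb R$ is real analytic and that $E':U\to L^2$ is analytic. This holds because $v\mapsto(1-v)^{-1}$ and $(1-v)^{-2}$ are analytic as Nemytskii maps on the ball $\|v\|_{L^\infty}<1$: for $N\le3$, $H^2\hookrightarrow L^\infty$ is an algebra, and the power series converge in $L^\infty$. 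The scalar map $s\mapsto(1+s)^{-2}$ is analytic for $s>-1$. The linearization is
\[
E''(\psi)h=-\Delta h-f'(\psi)h+c(\psi)\Bigl(\int_\Omega\frac{h}{(1-\psi)^2}\,\mathrm{d}x\Bigr)\frac{1}{(1-\psi)^2},
\]
which is $-\Delta$ plus a compact perturbation (a multiplication operator and a rank-one term). It is therefore Fredholm of index zero from $H^2\cap H_0^1$ to $L^2$, and it is symmetric. The abstract Lojasiewicz--Simon theorem (as in \cite{ref9}, \cite{ref13}, \cite{ref8}) then applies.

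I expect this analyticity and Fredholm verification, in particular handling the nonlocal rank-one term inside the Lyapunov--Schmidt reduction, to be the main obstacle. Keeping everything in the $H^2\hookrightarrow L^\infty$ framework for $N\le3$ should make it routine.

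The final step is the standard convergence argument. Compactness of $\omega(u_0)$ gives uniform constants $\theta,\sigma,C$ on a neighbourhood of it. Set $H(t)=(E(u(t))-E_\infty)^\theta$, where $E_\infty=\lim_{t\to\infty}E(u(t))$; the case $E(u(t_0))=E_\infty$ for some $t_0$ is trivial, since then $u$ is stationary. While $u$ stays in the neighbourhood,
\[
-\frac{d}{dt}H\ge \theta C^{-1}\|u_t\|_{L^2}.
\]
Hence $\int_{t_0}^\infty\|u_t\|_{L^2}\,\mathrm{d}t<\infty$, after the usual argument that $u$ cannot leave the neighbourhood once it is close enough. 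So $u(t)$ converges in $L^2$, and by precompactness in $H^2$ it converges in $H^2$ to a single $\psi$, giving \eqref{1.6}.
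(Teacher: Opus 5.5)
Your proposal is correct and follows essentially the same route as the paper. The common steps are the energy identity, an $H^3$ bound for $t\ge 1$ from the time-differentiated equation, analyticity of $E$ together with the (semi-)Fredholm property of $E''(\psi)$ as $-\Delta$ plus a compact perturbation, and integrating $-\frac{d}{dt}\bigl(E(u(t))-E_\infty\bigr)^{\theta}\ge c\|u_t\|_{L^2}$.

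The differences are minor variants. You cite an abstract Lojasiewicz--Simon theorem where the paper carries out the Lyapunov--Schmidt reduction by hand. You also use uniform constants on a neighbourhood of the compact set $\omega(u_0)$, which $u(t)$ eventually enters since $\mathrm{dist}_{H^2}(u(t),\omega(u_0))\to 0$, instead of the paper's exit-time argument with $\overline{t_n}$.
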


Theorem \ref{tuilun} in Section 2.6 shows that when $\lambda$ is sufficiently large, the steady-state equation (\ref{1.5}) has no solution. Combining this with Theorem \ref{theorem1.1}, we obtain the following corollary:

\begin{corollary}\label{tuilun1}
Let $\Omega\subset\mathbb{R}^{N}\ (N=2,3)$ be a strictly star-shaped domain, and let $\beta$ be defined as in Theorem \ref{tuilun}. If $$\lambda>\frac{N(1+|\Omega|)^4}{2\beta |\Omega|^2},$$
then for any initial data $u_0\in H^2\cap H_0^1(\Omega)$, equation (\ref{1.1}) has no global solution satisfying (\ref{u}).
\end{corollary}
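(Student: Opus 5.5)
The corollary follows by contradiction, combining Theorem \ref{theorem1.1} with the nonexistence result Theorem \ref{tuilun}. Suppose that for some $u_0\in H^2\cap H_0^1(\Omega)$ equation (\ref{1.1}) has a global solution $u$ satisfying (\ref{u}). In particular $\|u(t)\|_{L^\infty}\le 1-\delta$ for all $t\ge0$, with some $\delta\in(0,1)$. Since $N\in\{2,3\}$ lies in the range $1\le N\le 3$, and a strictly star-shaped domain is in particular a bounded smooth domain as assumed there, all hypotheses of Theorem \ref{theorem1.1} hold. It therefore yields a $\psi\in C^\infty$ solving (\ref{1.5}) with $u(\cdot,t)\to\psi$ in $H^2$.

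Next I check that $\psi$ is an admissible solution of the steady-state problem (\ref{1.4}), which is the object Theorem \ref{tuilun} excludes.

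\emph{Upper bound.} For $N\le 3$ we have $H^2\hookrightarrow C(\bar\Omega)$, so $H^2$ convergence gives uniform convergence. Passing to the limit in $\|u(t)\|_{L^\infty}\le 1-\delta$ yields $\|\psi\|_{L^\infty}\le 1-\delta<1$.

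\emph{Positivity and boundary values.} The right-hand side of (\ref{1.5}) is strictly positive, because $1-\psi\ge\delta>0$ and the nonlocal integral is finite and positive. Hence $-\Delta\psi>0$ in $\Omega$ with $\psi=0$ on $\partial\Omega$, and the maximum principle gives $\psi\ge0$, indeed $\psi>0$ in $\Omega$. So $0\le\psi<1$, and $\psi$ is a classical solution of (\ref{1.4}), smooth up to the boundary by elliptic regularity.

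Finally I invoke Theorem \ref{tuilun}. On a strictly star-shaped domain with $N=2,3$ and $\lambda>\frac{N(1+|\Omega|)^4}{2\beta|\Omega|^2}$, it asserts that (\ref{1.4}) has no classical solution. This contradicts the existence of $\psi$, so no global solution satisfying (\ref{u}) can exist, for any initial data.

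The only real point to verify is that the solution class in Theorem \ref{tuilun} matches what Theorem \ref{theorem1.1} produces: classical, smooth up to the boundary, and with $0\le\psi<1$. I expect Theorem \ref{tuilun} to rest on a Pohozaev identity, which needs smoothness up to $\partial\Omega$ and uses $\beta$ from strict star-shapedness, $x\cdot\nu\ge\beta$. The steps above supply exactly these properties, so the matching should go through, and I do not anticipate a substantive obstacle.
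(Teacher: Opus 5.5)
Your proposal is correct and follows the paper's route: the paper obtains the corollary simply by combining Theorem \ref{theorem1.1} (convergence of the global solution to a steady state $\psi$) with the nonexistence result Theorem \ref{tuilun}. Your extra check that $\psi$ satisfies the constraint $0\le\psi<1$ of (\ref{1.4}), via uniform convergence and the maximum principle, fills in a step the paper leaves implicit. One small correction: smoothness of $\partial\Omega$ is the paper's standing assumption, not a consequence of strict star-shapedness.
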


\begin{remark}
    In particular, when $\Omega=B_1(0)\subset\mathbb{R}^{N}\ (N=2,3)$, if $$\lambda>\frac{N^2(1+|\Omega|)^4}{2|\Omega|},$$
where $|\Omega|$ is the volume of the unit ball $B_1(0)$, then for any initial data $u_0\in H^2\cap H_0^1(\Omega)$, equation (\ref{1.1}) has no global solution satisfying (\ref{u}).
\end{remark}

\begin{theorem}\label{theorem1.2}
    Let $\theta\in (0,\frac{1}{2}]$ be defined as in Theorem \ref{theorem2.1}. If $\theta=\frac{1}{2}$, then the solution $u$ of equation (\ref{1.1}) decays exponentially to $\psi$ in $H^2\cap H_0^1(\Omega)$, i.e.
    \begin{equation}
        \|u(\cdot,t)-\psi\|_{H^2}\le C_0e^{-C_1t},\quad t\to +\infty,\label{2.33}
    \end{equation}
    where $C_0,\ C_1>0$ are constants.
    If $\theta\in (0,\frac{1}{2})$, then $u$ decays polynomially to $\psi$ in $H^2\cap H_0^1(\Omega)$, i.e.
    \begin{equation}
        \|u(\cdot,t)-\psi\|_{H^2}\le C(1+t)^{-\frac{\theta}{1-2\theta}},\quad t\to +\infty,\label{2.34}
    \end{equation}
    where $C>0$ is a constant.
\end{theorem}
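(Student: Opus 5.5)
We will follow the standard Lojasiewicz--Simon convergence-rate argument. Introduce the energy
\[
E(u)=\frac12\int_\Omega|\nabla u|^2\,\mathrm{d}x-\lambda\Bigl(1+\int_\Omega\frac{1}{1-u}\,\mathrm{d}x\Bigr)^{-1}.
\]
Its $L^2$-gradient is $E'(u)=-\Delta u-\frac{\lambda}{(1-u)^2(1+\int_\Omega\frac{1}{1-u})^2}$, so (\ref{1.1}) reads $u_t=-E'(u)$. Along the global solution (\ref{u}) this gives
\[
\frac{d}{dt}E(u(t))=-\|u_t\|_{L^2}^2 .
\]
By Theorem \ref{theorem1.1}, $u(t)\to\psi$ in $H^2$ and $E(u(t))\downarrow E(\psi)$. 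The Lojasiewicz--Simon inequality of Theorem \ref{theorem2.1} provides $\sigma,C_L>0$ such that
\[
|E(v)-E(\psi)|^{1-\theta}\le C_L\|E'(v)\|_{L^2}
\]
whenever $\|v-\psi\|_{H^2}<\sigma$ and $\|v\|_{L^\infty}\le 1-\delta$. (If the paper's version uses the $(H^2)^*$ or $H^{-1}$ norm, we use that norm instead; the $L^2$ version dominates it.) Fix $t_0$ with $\|u(t)-\psi\|_{H^2}<\sigma$ for all $t\ge t_0$.

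\textbf{Step 1 (energy decay).} Set $H(t)=E(u(t))-E(\psi)\ge0$. Then
\[
H'=-\|u_t\|^2=-\|E'(u)\|^2\le -C_L^{-2}H^{2(1-\theta)} .
\]
If $\theta=\frac12$ this is linear and gives $H(t)\le Ce^{-2C_L^{-2}t}$. If $\theta<\frac12$, integrating gives $H(t)\le C(1+t)^{-1/(1-2\theta)}$. (If $H(t_1)=0$ for some $t_1$, then $u$ is stationary from $t_1$ on and the claim is trivial.)

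\textbf{Step 2 (decay in $L^2$).} Compute
\[
-\frac{d}{dt}H^\theta=\theta H^{\theta-1}\|u_t\|^2\ge \theta C_L^{-1}\|u_t\|_{L^2}.
\]
Integrating from $t$ to $\infty$ gives
\[
\|u(t)-\psi\|_{L^2}\le\int_t^\infty\|u_t\|\,\mathrm{d}s\le C H(t)^\theta .
\]
This yields the rate $e^{-ct}$ when $\theta=\frac12$, and $(1+t)^{-\theta/(1-2\theta)}$ when $\theta<\frac12$, as claimed.

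\textbf{Step 3 (upgrading to $H^2$).} This is the main obstacle. Let $w=u-\psi$ and $f(u)=\lambda(1-u)^{-2}(1+\int_\Omega(1-u)^{-1})^{-2}$. Then
\[
w_t-\Delta w=f(u)-f(\psi).
\]
Since $\|u\|_{L^\infty},\|\psi\|_{L^\infty}\le1-\delta$, $f$ is Lipschitz on this range, and so is the nonlocal factor, giving $\|f(u)-f(\psi)\|_{L^2}\le L\|w\|_{L^2}$. Elliptic regularity yields
\[
\|w\|_{H^2}\le C\bigl(\|\Delta w\|_{L^2}\bigr)\le C\bigl(\|u_t\|_{L^2}+L\|w\|_{L^2}\bigr),
\]
so it remains to bound $\|u_t(t)\|_{L^2}$ pointwise in time.

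For this, differentiate the equation in time (justified by regularizing or by the semigroup regularity). The function $v=u_t$ satisfies $v_t-\Delta v=f'(u)v$, where $f'(u)$ includes the nonlocal term and is bounded on $L^2$ uniformly in $t$. Hence
\[
\frac{d}{dt}\|v\|^2\le C\|v\|^2 .
\]
Combining this with $\int_t^{t+1}\|v\|^2\,\mathrm{d}s=H(t)-H(t+1)\le H(t)$, the uniform Gronwall lemma gives $\|u_t(t+1)\|^2\le C H(t)$. Therefore
\[
\|u_t(t)\|\le CH(t-1)^{1/2}\le CH(t-1)^{\theta},
\]
since $\theta\le\frac12$ and $H\to0$.

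Combining Steps 1--3 gives (\ref{2.33}) and (\ref{2.34}), with constants adjusted to cover $t\ge t_0+1$. The delicate points are the justification of the time-differentiated equation in the $C([0,\infty),H^2)\cap C^1([0,\infty),L^2)$ class and the Lipschitz bound on the nonlocal factor, both of which rest on the uniform bound $1-\delta$.
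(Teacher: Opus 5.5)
Your proof is correct. Steps 1--2 match the paper's argument up to bookkeeping. The paper works with $y=(E(u)-E(\psi))^{\theta}$ and obtains $y'+C\|u_t\|_{L^2}\le 0$, hence $\|u(t)-\psi\|_{L^2}\le y(t)/C$, together with $y'+Cy^{(1-\theta)/\theta}\le 0$. It treats $\theta<\frac12$ by comparison with the explicit solution of the corresponding ODE, which is equivalent to your direct integration of $H'\le -cH^{2(1-\theta)}$.

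The genuine difference is in Step 3. The paper sets $v=u-\psi$ and proves three energy inequalities: it tests the equation for $v$ with $v$ and with $v_t$, and the time-differentiated equation with $v_t$. These add up to $\frac{d}{dt}\widetilde y+\alpha\widetilde y\le C\|v\|_{L^2}^2$ for $\widetilde y=\|v\|_{L^2}^2+\|\nabla v\|_{L^2}^2+\frac{1}{4C_2}\|v_t\|_{L^2}^2$. It then feeds the $L^2$ rate from Step 2 into the Duhamel formula for $\widetilde y$, splitting the integral at $t/2$ in the algebraic case. It finishes with the same elliptic bound $\|v\|_{H^2}\le C(\|v_t\|_{L^2}+\|v\|_{L^2})$ that you use.

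You keep only the time-differentiated estimate and close it with the dissipation identity $\int_t^{t+1}\|u_t\|_{L^2}^2\,ds=H(t)-H(t+1)$ through the uniform Gronwall lemma. This is shorter and gives a sharper intermediate bound, $\|u_t(t)\|_{L^2}\le C H(t-1)^{1/2}$. That makes it transparent that the $H^2$ rate is dictated by the $L^2$ rate of $u-\psi$ alone. The paper's route, by contrast, does not use the gradient structure in the upgrade step. It would therefore convert any given $L^2$ decay rate of $u-\psi$ into the same $H^2$ rate, at the price of two extra estimates and the convolution computation. Both routes rely in the same way on the formal time-differentiation of the equation, which you rightly flag.

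One slip to correct: with the minus sign, your $E$ has $L^2$-gradient $-\Delta u+f(u)$, so $\frac{d}{dt}E(u(t))=-\|u_t\|_{L^2}^2$ fails for it. The Lyapunov functional is (\ref{2.1}), with $+\lambda(1+\int_\Omega\frac{1}{1-u}\,\mathrm{d}x)^{-1}$. Everything you do afterwards uses only $u_t=\Delta u+f(u)$ and the inequality (\ref{2.11}) or (\ref{2.25}) for the correct $E$, so nothing else changes. (For $\theta=\frac12$ your Step 1 gives $H\le Ce^{-C_L^{-2}t}$ rather than $e^{-2C_L^{-2}t}$, which is immaterial.)
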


The structure of this paper is organized as follows: Section 2 introduces contraction mapping principle, operator semigroups, analyticity, semi-Fredholm operators, Lojasiewicz Inequality and so on. Section 3 proves the local existence of solutions to equation (\ref{1.1}). Section 4 proves the existence of global solutions to equation (\ref{1.1}) under certain conditions. Section 5.1 constructs the energy functional and gradient system for equation (\ref{1.1}); Section 5.2 proves the analyticity of the energy functional; Section 5.3 extends the Lojasiewicz Inequality to the problem studied in this paper; Section 5.4 establishes the convergence of solutions to equation (\ref{1.1}); Section 5.5 derives the convergence rate of solutions to equation (\ref{1.1}). Section 6 provides numerical experiments to visually demonstrate the asymptotic behavior and quenching phenomena of solutions, and proposes several conjectures.

\section{Preliminaries}

\subsection{Contraction mapping principle}

\begin{theorem}[\cite{rud}]
    Let $(X;\|\cdot\|)$ be a Banach space, $T:X\to X$, and $\exists L\in [0, 1)$ such that $\forall x$, $y\in X$, $\|Tx-Ty\|\le L\|x-y\|$. Then $T$ has a unique fixed point $x^{*}\in X$, such that $Tx^{*}=x^{*}$.
\end{theorem}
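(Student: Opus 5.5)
The statement is the Banach fixed point theorem, and I will use the classical Picard iteration argument. Fix an arbitrary $x_0\in X$ and define $x_{n+1}=Tx_n$ for $n\ge 0$. The plan has three steps: show that this sequence is Cauchy, identify its limit as a fixed point, and then prove uniqueness.

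\emph{Step 1 (Cauchy property).} By induction the contraction estimate gives $\|x_{n+1}-x_n\|\le L^n\|x_1-x_0\|$. For $m>n$, the triangle inequality and the geometric series then give
\begin{equation*}
\|x_m-x_n\|\le \sum_{k=n}^{m-1}L^k\|x_1-x_0\|\le \frac{L^n}{1-L}\|x_1-x_0\|.
\end{equation*}
Since $L\in[0,1)$, the right-hand side tends to $0$ as $n\to\infty$, uniformly in $m$. Hence $(x_n)$ is Cauchy, and by completeness of $X$ it converges to some $x^*\in X$.

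\emph{Step 2 (the limit is a fixed point).} The map $T$ is Lipschitz, hence continuous. Passing to the limit in $x_{n+1}=Tx_n$ therefore gives $x^*=Tx^*$. Quantitatively, this follows from
\begin{equation*}
\|Tx^*-x^*\|\le \|Tx^*-Tx_n\|+\|x_{n+1}-x^*\|\le L\|x^*-x_n\|+\|x_{n+1}-x^*\|\to 0.
\end{equation*}

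\emph{Step 3 (uniqueness).} If $Tx^*=x^*$ and $Ty^*=y^*$, then $\|x^*-y^*\|=\|Tx^*-Ty^*\|\le L\|x^*-y^*\|$. This means $(1-L)\|x^*-y^*\|\le 0$, and since $L<1$ we get $x^*=y^*$.

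No step is a genuine obstacle. The only point needing care is that Step 1 uses $L<1$ to sum the geometric tail, and that convergence of the iterates relies on completeness of $X$.

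In the later applications, such as Theorem \ref{local existence}, $T$ will act on a closed subset of a Banach space rather than on all of $X$. I would note that the same proof works verbatim on any closed subset $M\subset X$ with $T(M)\subset M$: all iterates stay in $M$, and the limit stays in $M$ because $M$ is closed.
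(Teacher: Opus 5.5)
Your proof is correct. It is the standard Picard-iteration argument (geometric Cauchy estimate, passage to the limit via continuity of $T$, uniqueness from $L<1$); the paper gives no proof of its own and simply cites this classical result from \cite{rud}. Your closing remark about closed invariant subsets is apt, since the paper actually applies the theorem to the operator $B$ on the closed ball $Y_M\subset C([0,T];L^2(\Omega))$, where one must also check that $B(Y_M)\subset Y_M$.
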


\subsection{operator semigroup}

\begin{definition}[See Definition 2.1 in \cite{ref18}]
Let $H$ be a Banach space, consider a one-parameter family of bounded linear operators $\{S(t) ; t \geq 0\}$ on $H$. If it satisfies:
\\(1) $S(0)=I$ (the identity operator);
\\(2) $\forall\ t_1,t_2>0$, $S(t_1+t_2)=S(t_1)S(t_2)=S(t_2)S(t_1)$;
\\(3) For any $u_{0} \in H$, $\lim\limits_{t\to 0}S(t) u_{0} =u_0$,
\\then it is called a $C_0$-semigroup on $H$. Furthermore, define:
$$
\begin{aligned}
D(A):&=\left\{x \in H \mid \lim _{h \rightarrow+0} \frac{S(h) x-x}{h}\ \text{exists} \right\}, \\
A x:&=\lim _{h \rightarrow+0} \frac{S(h) x-x}{h} ,\quad \forall x \in D(A),
\end{aligned}
$$
The operator $A$ is called the infinitesimal generator of this $C_0$-semigroup $\{S(t) ; t \geq 0\}$.
\end{definition}

\begin{definition}[See p. 238 in \cite{ref8}]
    Let $H$ be a Banach space. For an initial value $u_0\in H$, the set $\bigcup_{t\ge 0}S(t)u_0=\bigcup_{t\ge 0}u(t)$ is called the orbit starting from $u_0$, and its $\omega$-limit set is defined as
    $$\omega(u_0)=\bigcap_{s\ge 0}\overline{\bigcup_{t\ge s}S(t)u_0}=\{\psi\ |\ \exists t_n,\ \text{such that}\  u(\cdot,t_n)\to \psi,\ t_n\to +\infty\}.$$
    If $x\in H$ satisfies $S(t)x=x$ for all $t\ge 0$, then $x$ is called a fixed point of $S(t)$; if $X\subset H$ satisfies $S(t)X=X$ for all $t\ge 0$, then $X$ is called an invariant set.
\end{definition}

\begin{definition}[\cite{are}]
    Consider the equation:
    \begin{equation*}
    \left\{\begin{array}{l}
    u_t+A u=F(u), \\
    u(0)=u_{0}.
    \end{array}\right.
    \label{mild}
    \end{equation*}
    where $A$ is defined on a dense subset $D(A)$ of Banach space $B$, generating a $C_0$-semigroup $\{S(t) ; t \geq 0\}$. If  $u_0\in B$, and $u$ satisfies
    $$ u(t)=S(t) u_{0}+\int_{0}^{t} S(t-\tau) F(\tau) d \tau ,$$
    then $u$ is called a mild solution of equation.
    \end{definition}

\begin{definition}[\cite{ref18}]
    For equation
    \begin{equation}
        \left\{\begin{array}{l}u_t+A u(t)=f(t), \quad t>t_{0} , \\ u\left(0\right)=u_{0},
    \end{array}\right.\label{0021}
    \end{equation}
    where $-A$ is the infinitesimal generator of the $C_0$-semigroup $\{ T(t),\ t \geq 0\}$ on a reflexive Banach space $X$, $f:\left[t_{0}, T\right]  \rightarrow X $. If $u$ is differentiable almost everywhere on $[0,T]$, $u_t\in L^1(0,T;X)$, and satisfies $u\left(0\right)=u_{0}$ and $u_t+A u(t)=f(t)$ to be true almost everywhere on $[0,T]$, then $u$ is called a strong solution of the equation.
\end{definition}

\begin{lemma}[\cite{ref18}]\label{classic exists}
    If $f:\left[t_{0}, T\right]  \rightarrow X $ satisfies the Lipschitz condition, then $ \forall u_0\in D(A) $, the equation (\ref{0021}) has a unique classical solution:
   \begin{equation*}
       u(t)=T(t) u_0+\int_{0}^{t} T(t-s) f(s) ds.
   \end{equation*}
\end{lemma}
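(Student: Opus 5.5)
The plan is to follow the standard variation-of-constants argument for inhomogeneous Cauchy problems with a $C_0$-semigroup, as in \cite{ref18}. Without loss of generality take $t_0=0$. Set
\[
u(t)=T(t)u_0+v(t),\qquad v(t)=\int_0^t T(t-s)f(s)\,ds .
\]
Since $u_0\in D(A)$, the term $T(t)u_0$ lies in $D(A)$ and is continuously differentiable, with $\frac{d}{dt}T(t)u_0=-AT(t)u_0$. So everything reduces to showing that $v$ solves $v_t+Av=f$ with $v(0)=0$, and then proving uniqueness.

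\textbf{Regularity of $f$.} The first step uses reflexivity of $X$. A Lipschitz function $f:[0,T]\to X$ with values in a reflexive Banach space is absolutely continuous. It is differentiable almost everywhere, with $f'\in L^\infty(0,T;X)$ and
\[
f(t)=f(0)+\int_0^t f'(\tau)\,d\tau .
\]
This is the Komura-type differentiability result for vector-valued Lipschitz functions. I expect this to be the main obstacle if it has to be proved rather than quoted. I would quote it, and only sketch that weak compactness of bounded sets in $X$ gives weak derivatives which are then upgraded to a.e. strong derivatives.

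\textbf{Differentiating $v$.} Change variables to write $v(t)=\int_0^t T(s)f(t-s)\,ds$. Then
\[
\frac{v(t+h)-v(t)}{h}=\frac1h\int_t^{t+h}T(s)f(t+h-s)\,ds+\int_0^t T(s)\frac{f(t+h-s)-f(t-s)}{h}\,ds .
\]
Let $h\to0$ and use strong continuity of $T$ together with dominated convergence; the difference quotients of $f$ are uniformly bounded by the Lipschitz constant. This gives
\[
v'(t)=T(t)f(0)+\int_0^t T(s)f'(t-s)\,ds ,
\]
and this expression is continuous in $t$.

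\textbf{$v(t)\in D(A)$ and the equation.} Use the semigroup identity
\[
\frac{T(h)-I}{h}\,v(t)=\frac{v(t+h)-v(t)}{h}-\frac1h\int_t^{t+h}T(t+h-s)f(s)\,ds .
\]
The first term on the right converges by the previous step, and the second converges to $f(t)$ by continuity of $f$. Hence $v(t)\in D(A)$ and $-Av(t)=v'(t)-f(t)$. Therefore $u$ is differentiable, $u_t+Au=f$ on $[0,T]$, and $u(0)=u_0$.

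\textbf{Uniqueness.} Let $w$ be the difference of two solutions. Then $w$ solves $w_t+Aw=0$ with $w(0)=0$. For fixed $t$, the function $s\mapsto T(t-s)w(s)$ has derivative $T(t-s)\big(Aw(s)+w'(s)\big)=0$ on $[0,t]$. Hence $w(t)=T(0)w(t)=T(t)w(0)=0$.

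This yields existence, uniqueness, and the representation formula stated in Lemma \ref{classic exists}. If one only has a.e. differentiability of $f$, the identity $u_t+Au=f$ holds at every point where $f$ is continuous (so everywhere), and $u_t$ is continuous. This is what is needed for the solution to be classical in the sense used here.
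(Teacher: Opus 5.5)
Your argument is correct, but it does not follow the route of the source the paper relies on. The paper gives no proof of its own and only cites \cite{ref18}, where the result is stated for a \emph{strong} solution. There one first shows that $v$ is Lipschitz, using the shift identity
\[
v(t+h)-v(t)=\int_0^h T(t+h-s)f(s)\,ds+\int_0^t T(t-s)\bigl(f(s+h)-f(s)\bigr)\,ds ,
\]
and then uses reflexivity to differentiate $v$ almost everywhere, so the equation holds only a.e.

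You instead apply the Komura-type theorem to $f$, which gives $f\in W^{1,\infty}(0,T;X)$. You then run the computation usually done for $C^1$ forcing, with $f'$ only bounded and measurable. Dominated convergence gives $v'(t)=T(t)f(0)+\int_0^tT(t-s)f'(s)\,ds$, which is continuous. Hence $v\in C^1([0,T];X)$, $u(t)\in D(A)$ for every $t$, and $u$ is a genuinely classical solution.

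This conclusion is stronger than what the cited reference gives, and it is exactly what the lemma as worded in the paper (``classical'') asserts. Your proof also shows that reflexivity is used only to place $f$ in $W^{1,1}(0,T;X)$; for any such $f$ the argument works in an arbitrary Banach space.

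Two small points need tidying. For $h<0$, your difference-quotient formula evaluates $f$ at negative times, so extend $f$ by $f(0)$ there or split the integral differently. The identity giving $v(t)\in D(A)$ needs $t+h\le T$, so at $t=T$ conclude from closedness of $A$, using that $Av(t)=f(t)-v'(t)$ is continuous up to $T$.
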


\begin{lemma}[\cite{ref18}]\label{strong exists}
    For equation
    \begin{equation}
	\left\{\begin{array}{l}u_t+A u(t)=f(t, u(t)), \quad t>t_{0},  \\ u\left(t_{0}\right)=u_{0},\end{array}\right.\label{0022}
    \end{equation}
    If $f:\left[t_{0}, T\right] \times X \rightarrow X $ satisfies the Lipschitz condition with respect to $t$ and $u$, $u_{0} \in D(A)$ and $u$ is a mild solution to the equation (\ref{0022}), then $u$ is a strong solution. At this time, $u$ satisfies the Lipschitz condition with respect to $t$.
\end{lemma}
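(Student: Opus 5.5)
The plan is to reduce the nonlinear problem (\ref{0022}) to the linear one (\ref{0021}) with right-hand side $g(t):=f(t,u(t))$, and then apply Lemma \ref{classic exists}. That requires showing that $g$ is Lipschitz in $t$, which in turn follows once the mild solution $u$ is shown to be Lipschitz in $t$. Throughout, let $M\ge 1$ and $\omega\ge0$ be such that $\|T(t)\|\le Me^{\omega t}$, let $L$ be the Lipschitz constant of $f$ in both variables on $[t_0,T]\times X$, and set $t_0=0$ for simplicity. Since $u$ is continuous on $[0,T]$, the quantity $K:=\sup_{s\in[0,T]}\|f(s,u(s))\|$ is finite.

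\textbf{Step 1 ($u$ is Lipschitz).} Fix $h>0$ with $t+h\le T$. Using the mild formulation and the change of variables $s\mapsto s+h$ in the integral, I would write
\begin{align*}
u(t+h)-u(t)&=T(t)\bigl(T(h)u_0-u_0\bigr)+\int_0^h T(t+h-s)f(s,u(s))\,ds\\
&\quad+\int_0^t T(t-s)\bigl[f(s+h,u(s+h))-f(s,u(s))\bigr]\,ds .
\end{align*}
The three terms are estimated separately.
\begin{itemize}
\item Since $u_0\in D(A)$, we have $T(h)u_0-u_0=-\int_0^h T(\tau)Au_0\,d\tau$, so the first term is bounded by $C h\|Au_0\|$.
\item The second term is bounded by $CKh$.
\item By the Lipschitz property of $f$, the third term is bounded by $CL\int_0^t\bigl(h+\|u(s+h)-u(s)\|\bigr)\,ds$.
\end{itemize}
Gronwall's inequality applied to $\phi(t)=\|u(t+h)-u(t)\|$ then gives $\|u(t+h)-u(t)\|\le C'h$, with $C'$ independent of $t$ and $h$. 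Hence $u$ is Lipschitz on $[0,T]$, which is the final assertion of the lemma.

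\textbf{Step 2 ($g$ is Lipschitz).} Since $f$ is Lipschitz in both variables,
\[
\|g(t)-g(s)\|\le L\bigl(|t-s|+\|u(t)-u(s)\|\bigr)\le L(1+C')|t-s| .
\]

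\textbf{Step 3 (linear problem and identification).} By Lemma \ref{classic exists}, applied with $f$ replaced by $g$ and with $u_0\in D(A)$, the linear problem $v_t+Av=g(t)$, $v(0)=u_0$, has a unique classical solution given by
\[
v(t)=T(t)u_0+\int_0^tT(t-s)g(s)\,ds .
\]
This right-hand side is exactly the mild formula satisfied by $u$, so $v=u$. Consequently $u$ is differentiable with $u_t+Au=f(t,u(t))$, in particular almost everywhere, and $u_t\in L^1(0,T;X)$. So $u$ is a strong solution.

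If one only has the weaker linear result, in which a Lipschitz $g$ on a reflexive $X$ yields a strong rather than a classical solution, the same conclusion follows. In that case one uses that in a reflexive space the Lipschitz function $u$ is differentiable almost everywhere with $u_t\in L^\infty(0,T;X)\subset L^1(0,T;X)$, and that $u(t)\in D(A)$ for almost every $t$.

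\textbf{Main obstacle.} The delicate step is Step 1. The estimate there genuinely requires $u_0\in D(A)$ to obtain the $O(h)$ bound on $T(h)u_0-u_0$. One also has to handle the integral shift carefully, so that the Lipschitz constant of $f$ in $t$ enters only through the term $h$ and Gronwall closes with constants uniform in $h$.
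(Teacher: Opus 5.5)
Your proposal is correct. The paper gives no proof of its own and simply cites the reference, and your argument is essentially the standard one from that source: Lipschitz continuity of the mild solution via the shifted-integral decomposition and Gronwall (using $u_0\in D(A)$ to get $\|T(h)u_0-u_0\|=O(h)$), then Lipschitz continuity of $g(t)=f(t,u(t))$, and finally the linear result, whose solution is given by the same variation-of-constants formula and therefore coincides with $u$.

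Your fallback paragraph is unnecessary. Even the weaker linear strong-solution result already produces a solution represented by that same mild formula, so it equals $u$ directly, without re-deriving almost-everywhere differentiability or $u(t)\in D(A)$.
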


\begin{lemma}[\cite{ref18}]\label{classic exists 2}
   If $f:\left[t_{0}, T\right] \times X \rightarrow X $ is a continuously differentiable function, $u_{0} \in D(A)$ and $u$ is a mild solution to the equation (\ref{0022}), then $u$ is a classical solution.
\end{lemma}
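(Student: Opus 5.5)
This is a classical result from semigroup theory (Pazy). I would prove it by upgrading the mild solution step by step using the regularity of $f$. Set $g(t):=f(t,u(t))$, where $u$ is the given mild solution,
\[
u(t)=T(t-t_0)u_0+\int_{t_0}^t T(t-s)f(s,u(s))\,ds .
\]

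\textbf{Step 1 (Lipschitz continuity).} Since $f$ is $C^1$, it is locally Lipschitz in $(t,u)$. The set $\{u(t):t\in[t_0,T]\}$ is compact, so $f$ is Lipschitz on a neighbourhood of the graph of $u$. Lemma~\ref{strong exists}, or its local version obtained by the same argument, then shows that $u$ is Lipschitz on $[t_0,T]$. Consequently $g$ is Lipschitz as well.

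\textbf{Step 2 (candidate derivative).} I want to show $u\in C^1$. Let $B(t):=f_u(t,u(t))$ and $g_0:=f(t_0,u_0)$. I consider the linear integral equation
\[
w(t)=T(t-t_0)\bigl(-Au_0+g_0\bigr)+\int_{t_0}^t T(t-s)\bigl[f_t(s,u(s))+B(s)w(s)\bigr]ds .
\]
Here $B(\cdot)$ is strongly continuous with values in bounded operators, and $-Au_0+g_0\in X$. A standard Gronwall/contraction argument therefore gives a unique continuous solution $w$ on $[t_0,T]$.

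\textbf{Step 3 (identify $w$ with $u'$).} This is the main obstacle. I would form the difference quotient $w_h(t)=(u(t+h)-u(t))/h$ and use the mild formula for $u(t+h)$ after the shift $s\mapsto s+h$. This gives
\[
w_h(t)=T(t-t_0)\frac{u(t_0+h)-u_0}{h}+\int_{t_0}^{t}T(t-s)\frac{g(s+h)-g(s)}{h}\,ds .
\]
Next I expand
\[
\frac{g(s+h)-g(s)}{h}=f_t(s,u(s))+B(s)w_h(s)+\varepsilon_h(s),
\]
where $\varepsilon_h\to0$ uniformly. This uniformity uses the uniform continuity of $f_t$ and $f_u$ on compacts together with the Lipschitz bound from Step 1.

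For the initial term, $u_0\in D(A)$ and $g$ is continuous, so
\[
\frac{u(t_0+h)-u_0}{h}\to -Au_0+g_0 .
\]
Subtracting the equation for $w$ and applying Gronwall yields $\|w_h-w\|_\infty\to0$. Hence $u$ is differentiable with $u'=w$ continuous, and $g=f(\cdot,u)$ is $C^1$.

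\textbf{Step 4 (conclusion).} Since $g\in C^1([t_0,T];X)$ and $u_0\in D(A)$, the standard result for the inhomogeneous linear problem applies; this is the $C^1$ version of Lemma~\ref{classic exists}. It says that $v(t)=T(t-t_0)u_0+\int_{t_0}^tT(t-s)g(s)\,ds$ lies in $D(A)$, is $C^1$, and satisfies $v'+Av=g$. By construction $v=u$, so $u$ is a classical solution.
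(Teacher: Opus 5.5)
Your proposal is correct. It is essentially the standard proof of this result in the semigroup reference \cite{ref18}, which the paper cites without giving an argument: you solve the linearized Volterra equation for the candidate derivative $w$, show by Gronwall that the difference quotients of $u$ converge to $w$ uniformly, conclude that $t\mapsto f(t,u(t))$ is $C^1$, and finish with the $C^1$ inhomogeneous linear result. Step 1 is convenient but not essential: using only the uniform continuity of $u$, one gets $\|\varepsilon_h(s)\|\le \eta(h)\bigl(1+\|w_h(s)\|\bigr)$ with $\eta(h)\to 0$ uniformly in $s$, and this extra term is absorbed into the same Gronwall estimate.
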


\subsection{Analytic function and analytic mapping}

\begin{definition}[See Definition 2.2.3 in \cite{ref21}]
    Let $f$ be a smooth function defined on $D\subset \mathbb{R}^d$ and $y_0\in D$. If there exists $r\in (0,\mathrm{dist}(y_0,\partial D))$ such that
    $$f(y)=\sum\limits_{\alpha\in\mathbb{Z}_{+}^d}\frac{1}{\alpha !}\partial^{\alpha}f(y_0)(y-y_0)^{\alpha},\quad \forall y\in B_r(y_0),$$
    where
    $$\mathbb{Z}_{+}^d=\{\alpha=(\alpha_1,\dots,\alpha_d)\mid \alpha_i\ge 0,\ i=1,\dots,d\},$$
    then $f$ is called analytic at $y_0$. Furthermore, if $f$ is analytic at every point of $D$, it is called analytic on $D$.
\end{definition}

\begin{definition}[See Definition 2.2.6 in \cite{ref21}]
    Let $\Omega\subset\mathbb{R}^n$ be a bounded domain, $D\subset \mathbb{R}^d$, and $f=f(x,y):\Omega\times D\to\mathbb{R}$ a smooth function such that for every $D'\subset\subset D$, $f$ is bounded on $\Omega\times D'$. If for every $D'\subset\subset D$, there exists $r\in (0,\mathrm{dist}(D',\partial D))$ such that
    $$f(x,y+v)=\sum\limits_{\alpha\in\mathbb{Z}_{+}^d}\frac{1}{\alpha !}\partial_y^{\alpha}f(x,y)v^{\alpha},\quad \forall x\in\Omega,\ y\in D',\ v\in B_r(0)\subset \mathbb{R}^d,$$
    then $f$ is called uniformly analytic in $y\in D$ with respect to $x$.
\end{definition}

\begin{definition}[See Definition 2.3.1 in \cite{ref13}]
    Let $X$ and $Y$ be Banach spaces, $a\in X$, and $U\subset X$ an open neighborhood of $a$. For a mapping $f:U\to Y$, if there exist $r>0$ and a sequence of continuous symmetric $n$-linear mappings $(M_n)_{n\in \mathbb{N}}$ such that:\\
    (1) $\sum\limits_{n=1}^{\infty}\|M_n\|_{\mathcal{L}_n(X,Y)}r^n<\infty$, where $\|M_n\|_{\mathcal{L}_n(X,Y)}=\sup\{\|M_n(x_1,\dots,x_n)\|_Y\mid \|x_i\|_X\leq1,\ 1\leq i\leq n\}$;\\
    (2) $\overline{B}(a,r)\subset U$;\\
    (3) For every $h\in\overline{B}(0,r)$, $f(a+h)=f(a)+\sum\limits_{n=1}^{\infty}M_n(h,\dots,h)$.\\
    then $f$ is called analytic at $a$. Furthermore, if $f$ is analytic at every point of $U$, it is called analytic on $U$.
\end{definition}

\begin{theorem}[See Proposition 2.3.4 in \cite{ref13}]
    Let $f\in C^1(U,Y)$. Then
    $$f:U\to Y\ \text{is analytic on}\ U\ \Longleftrightarrow\ Df:U\to \mathcal{L}(U,Y)\ \text{is analytic on}\ U.$$
\end{theorem}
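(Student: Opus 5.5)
The plan is to prove the two implications separately, working at a fixed point $a\in U$, and to pass between the power series of $f$ and that of $Df$ by termwise differentiation and termwise integration along segments. Since analyticity on $U$ means analyticity at every point, it suffices to do this pointwise. Throughout, write $M(h^n)$ for $M(h,\dots,h)$.

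\emph{($\Rightarrow$)} Suppose $f(a+h)=f(a)+\sum_{n\ge1}M_n(h^n)$ for $\|h\|\le r$, with $\sum_n\|M_n\|r^n<\infty$. Fix $r'<r$.
\begin{enumerate}
\item Define $(n-1)$-linear maps $N_{n-1}:X^{n-1}\to\mathcal{L}(X,Y)$ by
$$N_{n-1}(x_1,\dots,x_{n-1})k:=n\,M_n(x_1,\dots,x_{n-1},k).$$
Each $N_{n-1}$ is symmetric because $M_n$ is, and $\|N_{n-1}\|\le n\|M_n\|$.
\item Since $n(r'/r)^{n-1}$ is bounded, we get $\sum_n n\|M_n\|r'^{\,n-1}<\infty$. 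The candidate expansion is then
$$Df(a+h)=Df(a)+\sum_{n\ge2}N_{n-1}(h^{n-1}),\qquad \|h\|\le r'.$$
Here $Df(a)$ should equal $M_1$.
\item To justify this, note that the homogeneous polynomial $P_n(h)=M_n(h^n)$ is Fréchet differentiable with $DP_n(h)k=nM_n(h^{n-1},k)$, by symmetry and multilinearity. The partial sums of $\sum P_n$ therefore converge uniformly on $\overline B(0,r')$, and so do their derivatives, by the bound above. The standard Banach-space theorem (uniform convergence of the derivatives plus convergence at one point) then identifies the limit of the derivatives with $Df(a+\cdot)$.
\end{enumerate}
I expect this termwise differentiation, together with producing the correct symmetric multilinear coefficients, to be the main technical point of this direction. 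The radius of analyticity for $Df$ is $r'$ instead of $r$, which is harmless.

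\emph{($\Leftarrow$)} Suppose $f\in C^1(U,Y)$ and $Df(a+h)=Df(a)+\sum_{n\ge1}N_n(h^n)$ for $\|h\|\le r$, with $N_n$ symmetric $n$-linear into $\mathcal{L}(X,Y)$ and $\sum_n\|N_n\|r^n<\infty$.
\begin{enumerate}
\item Because $f$ is $C^1$ and $\overline B(a,r)\subset U$ is convex, the fundamental theorem of calculus gives
$$f(a+h)-f(a)=\int_0^1 Df(a+sh)h\,ds.$$
\item Substitute the series. It converges uniformly in $s\in[0,1]$, since $\|N_n(s^nh^n)\|\le\|N_n\|r^n$, so summation and integration may be interchanged. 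This yields
$$f(a+h)=f(a)+Df(a)h+\sum_{n\ge1}\tfrac{1}{n+1}N_n(h^n)h.$$
\item Set $M_1=Df(a)$. For $n\ge1$, let $M_{n+1}$ be the symmetrization over $S_{n+1}$ of $(x_1,\dots,x_{n+1})\mapsto\frac{1}{n+1}N_n(x_1,\dots,x_n)x_{n+1}$.
\item Symmetrization does not change the values on the diagonal, so $M_{n+1}(h^{n+1})=\frac{1}{n+1}N_n(h^n)h$. It does not increase the norm, being an average of maps with the same norm, so $\|M_{n+1}\|\le\|N_n\|/(n+1)$.
\item Hence $\sum_n\|M_n\|r^n\le\|Df(a)\|r+r\sum_n\|N_n\|r^n<\infty$, and condition (3) of the definition holds. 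This proves analyticity of $f$ at $a$ with the same radius $r$.
\end{enumerate}
The only care needed in this direction is the symmetry requirement in the definition, which the symmetrization step handles.
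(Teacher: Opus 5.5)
Your proof is correct in both directions. The paper gives no proof of this statement; it quotes Proposition 2.3.4 of its reference and uses the result as a black box. Your proposal therefore supplies a self-contained argument where the paper has none. It is the standard argument: termwise differentiation of the homogeneous expansion for ($\Rightarrow$), and integration of $Df$ along $[a,a+h]$ followed by symmetrization for ($\Leftarrow$).

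The individual ingredients all check out:
\begin{itemize}
\item the derivative $DP_n(h)k=nM_n(h^{n-1},k)$, obtained from symmetry;
\item the equality $\|N_{n-1}\|=n\|M_n\|$ together with summability of $\sum n\|M_n\|r'^{\,n-1}$ for $r'<r$;
\item the uniform-in-$s$ convergence that justifies exchanging sum and integral;
\item symmetrization preserving the diagonal values and giving $\|M_{n+1}\|\le\|N_n\|/(n+1)$.
\end{itemize}

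Two small points of hygiene. First, the termwise-differentiation theorem concludes differentiability on an open set. Apply it on $B(0,r'')$ with $r'<r''<r$, or on $B(0,r)$ using locally uniform convergence of the derivative series, and then restrict to $\overline B(0,r')$. Second, the codomain of $Df$ should be $\mathcal{L}(X,Y)$, not $\mathcal{L}(U,Y)$ as printed in the statement; your coefficients $N_m:X^m\to\mathcal{L}(X,Y)$ already use the correct space.
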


\begin{theorem}[See Theorem 2.3.5 in \cite{ref13}]\label{lemma2.2}
    Let $Z$ be a Banach space, $f$ analytic at $a$, $V$ an open neighborhood of $f(a)$, and $g:V\to Z$ analytic at $f(a)$. Then the composition $g\circ f$ is analytic at $a$.
\end{theorem}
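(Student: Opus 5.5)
The statement just above is Theorem~\ref{lemma2.2}: analyticity is preserved under composition. I would work directly from the power series definition (Definition 2.3.1 of \cite{ref13}) and use majorant series.

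Write $b=f(a)$. By analyticity of $f$ at $a$ there are $r>0$ and symmetric $n$-linear maps $M_n$ with $f(a+h)=b+\sum_{n\ge1}M_n(h,\dots,h)$ for $\|h\|\le r$ and $\sum_n\|M_n\|r^n<\infty$. By analyticity of $g$ at $b$ there are $\rho>0$ and symmetric $k$-linear maps $N_k$ with $g(b+y)=g(b)+\sum_{k\ge1}N_k(y,\dots,y)$ for $\|y\|\le\rho$ and $\sum_k\|N_k\|\rho^k<\infty$.

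Define the scalar majorant $\phi(s)=\sum_{n\ge1}\|M_n\|s^n$. It is continuous and increasing on $[0,r]$, and $\phi(0)=0$. Choose $r'\in(0,r]$ with $\phi(r')\le\rho$. Then for $\|h\|\le r'$ we have $\|f(a+h)-b\|\le\phi(\|h\|)\le\rho$. This gives $f(a+h)\in\overline B(b,\rho)$, so
\[
g(f(a+h))=g(b)+\sum_{k\ge1}N_k\Bigl(\sum_n M_n(h,\dots,h),\dots,\sum_n M_n(h,\dots,h)\Bigr).
\]

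Now expand each $N_k$ by multilinearity. The resulting double series converges absolutely, because its norms are dominated by
\[
\sum_k\|N_k\|\,\phi(\|h\|)^k\le\sum_k\|N_k\|\rho^k<\infty .
\]
We may therefore regroup terms by total degree $m=n_1+\dots+n_k$. The degree-$m$ part is
\[
P_m(h)=\sum_k\sum_{n_1+\dots+n_k=m}N_k\bigl(M_{n_1}(h,\dots),\dots,M_{n_k}(h,\dots)\bigr),
\]
a continuous $m$-homogeneous polynomial. Let $L_m$ be its symmetrization, obtained by polarization. Then $L_m$ is symmetric $m$-linear and $L_m(h,\dots,h)=P_m(h)$.

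The main obstacle is the norm bound in condition (1) of the definition, because symmetrization can enlarge norms by up to $m^m/m!\le e^m$. To handle this, I would shrink the radius to $r''=r'/e$ and bound
\[
\sum_m\|L_m\|(r'')^m\le\sum_m\sup_{\|h\|\le r'}\|P_m(h)\|\le\sum_k\|N_k\|\phi(r')^k<\infty .
\]
Alternatively, one can bound $L_m$ directly by symmetrizing termwise, using the symmetric multilinear forms $N_k(M_{n_1},\dots)$ with norm $\le\|N_k\|\prod\|M_{n_i}\|$. This avoids the $e^m$ factor altogether.

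Conditions (2) and (3) follow from the construction with radius $r''$, possibly after shrinking $r''$ so that $\overline B(a,r'')\subset U$. Hence $g\circ f$ is analytic at $a$.
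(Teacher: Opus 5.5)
The paper does not prove this statement. It is quoted as Theorem 2.3.5 of \cite{ref13} and used only as a tool, so there is no in-paper argument to compare with. Your majorant-series substitution argument is the standard proof of this fact, and it is correct.

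The choice of $r'$ with $\phi(r')\le\rho$ puts $f(\overline{B}(a,r'))$ inside $\overline{B}(f(a),\rho)\subset V$. The bound $\sum_k\|N_k\|\phi(\|h\|)^k\le\sum_k\|N_k\|\rho^k$ then justifies both the multilinear expansion and the regrouping by total degree. The one real subtlety, condition (1) for the symmetrized coefficients, is handled correctly.

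Of your two options there, the termwise one is cleaner. Averaging the $m$-linear map $(h_1,\dots,h_m)\mapsto N_k\bigl(M_{n_1}(h_1,\dots,h_{n_1}),\dots\bigr)$ over permutations gives a map of norm at most $\|N_k\|\prod_i\|M_{n_i}\|$ with the same diagonal. Summing these gives a symmetric $L_m$ with $\sum_m\|L_m\|(r')^m\le\sum_k\|N_k\|\phi(r')^k$, with no loss of radius. The $r'/e$ route via the polarization constant $m^m/m!\le e^m$ is also valid.

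One small point on condition (2): the relevant set is the domain of $g\circ f$, namely $U\cap f^{-1}(V)$, not just $U$. Your choice of $r'$ already places $\overline{B}(a,r')$ inside it. Since $r''\le r$, no further shrinking is needed to stay inside $U$.
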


\begin{theorem}[See Lemma 2.2.14 in \cite{ref21}]\label{lemma2.3}
    Let $\Omega\subset \mathbb{R}^n$ be a bounded domain, $D\subset \mathbb{R}^d$, and $f=f(x,u):\Omega\times D\to\mathbb{R}$ a smooth function that is uniformly analytic in $u\in D$ with respect to $x$. Define $U=\{u\in L^{\infty}(\Omega;\mathbb{R}^d)\mid u(\Omega)\subset\subset D\ \mathrm{a.e.}\}$ and the mapping $S:U\to \mathbb{R}$ by
    $$S(u)=\int_{\Omega}f(x,u(x))\,\mathrm{d}x.$$
    Then $S:U\to \mathbb{R}$ is analytic.
\end{theorem}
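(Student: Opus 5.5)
The plan is to verify the definition of analyticity at an arbitrary point $a\in U$ directly, with the multilinear maps built from the Taylor coefficients of $f$ and controlled uniformly in $x$.

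\emph{Setup.} Fix $a\in U$. Then $a(\Omega)\subset K$ a.e. for some compact $K\subset D$. Choose an open $D'$ with $K\subset D'\subset\subset D$. Uniform analyticity of $f$ on $D'$ gives a radius $r_0\in(0,\mathrm{dist}(D',\partial D))$ such that, for a.e. $x$ and all $v\in B_{r_0}(0)\subset\mathbb{R}^d$,
\[
f(x,a(x)+v)=\sum_{\alpha}\frac{1}{\alpha!}\partial_y^\alpha f(x,a(x))\,v^\alpha .
\]
Group this series into homogeneous parts. For $h_1,\dots,h_n\in L^\infty(\Omega;\mathbb{R}^d)$ set
\[
M_n(h_1,\dots,h_n)=\frac{1}{n!}\int_\Omega D_y^n f(x,a(x))[h_1(x),\dots,h_n(x)]\,\mathrm{d}x .
\]
Each $M_n$ is symmetric and $n$-linear, and $M_n(h,\dots,h)=\int_\Omega\sum_{|\alpha|=n}\frac{1}{\alpha!}\partial_y^\alpha f(x,a(x))h(x)^\alpha\,\mathrm{d}x$. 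Measurability of $x\mapsto \partial^\alpha_y f(x,a(x))$ follows from the smoothness of $f$.

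\emph{Main obstacle: a Cauchy-type bound uniform in $x$.} I need constants $C$ and $\rho>0$ such that
\[
\frac{1}{\alpha!}|\partial_y^\alpha f(x,y)|\le C\rho^{-|\alpha|}\qquad\text{for all }x\in\Omega,\ y\in K'
\]
where $K'=\overline{D'}$ or a slightly smaller compact set containing $K$. Pointwise convergence of the Taylor series alone does not give this uniformly in $x$.

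I would try to obtain it as follows. Since the expansion converges on the real ball $B_{r_0}$, the same series converges absolutely on the complex polydisc of radius $r_1<r_0/\sqrt d$. It therefore defines a holomorphic extension $\tilde f(x,\cdot)$ there. Next I would show that $\sup_{x,\,y\in K,\,|z|\le r_1}|\tilde f(x,y+z)|\le M<\infty$, using the boundedness of $f$ on $\Omega\times D''$ for a compact $D'\subset\subset D''\subset\subset D$ together with the uniform radius. If direct control of the complex extension is awkward, a fallback is a Baire category argument over $x$ on the sets where the coefficients grow at most geometrically with a given rate; this is how such uniformity is usually proved.

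Once the uniform bound holds, the Cauchy estimates on the polydisc give the desired bound with $\rho=r_1$ and $C=M$.

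\emph{Conclusion.} With the coefficient bound in hand:
\[
\|M_n\|_{\mathcal{L}_n(L^\infty,\mathbb{R})}\le |\Omega|\,C\,\#\{|\alpha|=n\}\,\rho^{-n}\le |\Omega|\,C\,(d/\rho)^n .
\]
Hence $\sum_n\|M_n\|r^n<\infty$ for any $r<\rho/d$. Shrinking $r$ further so that $r<\mathrm{dist}(K,\partial D')$ ensures that $\|h\|_{L^\infty}\le r$ implies $(a+h)(\Omega)\subset\overline{D'}$ a.e. Then $\overline B(a,r)\subset U$, which is condition (2) of the definition.

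For such $h$, the pointwise expansion holds for a.e. $x$. The partial sums are dominated by the integrable constant $\sum_n C(dr/\rho)^n$, so dominated convergence lets us integrate term by term. This gives $S(a+h)=S(a)+\sum_{n\ge1}M_n(h,\dots,h)$. All three conditions of the definition of analyticity at $a$ are therefore met, and since $a\in U$ was arbitrary, $S$ is analytic on $U$.
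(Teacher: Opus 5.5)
The paper gives no proof of this statement; it is quoted from the cited monograph, so your proposal has to stand on its own. Its skeleton is right, and everything after the coefficient estimate is correct: the forms $M_n$, the count $\#\{|\alpha|=n\}\le d^n$, the choice of $r$ giving $\overline{B}(a,r)\subset U$, and dominated convergence. The gap is exactly the step you flagged: the bound $\frac{1}{\alpha!}|\partial_y^\alpha f(x,y)|\le C\rho^{-|\alpha|}$ uniformly in $x\in\Omega$. Neither of your routes can produce it. Boundedness of $f$ at real points together with a uniform radius of convergence says nothing about the size of the holomorphic extension uniformly in $x$. A Baire argument in $x$ only yields geometric bounds on some open piece of $\Omega$, never on all of it. In fact, under Definition 2.2.6 as reproduced in the paper, both the bound and the statement itself are false.

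Take $n=d=1$, $\Omega=(0,1)$, $D=(-1,1)$, $f(x,y)=\cos(y/x)$. It is smooth, $|f|\le 1$, and $y\mapsto f(x,y)$ is entire for each $x$. So the expansion holds for every $D'\subset\subset D$ with any admissible $r$, and $f$ is uniformly analytic in the stated sense. Yet $\frac{1}{k!}|\partial_y^kf(x,0)|=x^{-k}/k!$ for even $k$, and the extension satisfies $|\cos(iz/x)|=\cosh(z/x)$; both are unbounded as $x\to0$. Moreover, $S$ is not analytic at $0\in U$: for constant $h\equiv\varepsilon$, the substitution $s=|\varepsilon|/x$ gives
\[
S(\varepsilon)=|\varepsilon|\int_{|\varepsilon|}^{\infty}\frac{\cos s}{s^{2}}\,\mathrm{d}s=1-\frac{\pi}{2}|\varepsilon|+O(\varepsilon^{2}),
\]
so $S$ is not even differentiable there. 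Smoothness up to $\partial\Omega$ does not help either. The function $e^{-1/x}\cos(y/x^{2})$ is $C^\infty$ on $[0,1]\times\mathbb{R}$, but the $t^{2m}$-coefficient of $t\mapsto S(t)$ at $0$ has modulus $\frac{1}{(2m)!}\int_1^\infty e^{-\tau}\tau^{4m-2}\,\mathrm{d}\tau$, which grows super-geometrically. What is missing is a genuinely uniform hypothesis. One choice is $\sup_{x\in\Omega,\,y\in D'}\sum_\alpha\frac{1}{\alpha!}|\partial_y^\alpha f(x,y)|\,r^{|\alpha|}<\infty$ for each $D'\subset\subset D$. Another is a holomorphic extension to a complex neighbourhood of $D'$ that is bounded uniformly in $x$. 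You should assume such a hypothesis rather than try to derive it; with it, your argument is complete. For the paper's only application, $f=1/(1-u)$ does not depend on $x$ and $\frac{1}{k!}|f^{(k)}(u)|=(1-u)^{-k-1}$, so the bound is explicit there.
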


\subsection{Semi-Fredholm Operators}

\begin{definition}[See Definition 2.2.1 in \cite{ref13}]
    Let $W$ and $Z$ be Banach spaces, and let $A \in \mathcal{L}(W, Z)$ be a bounded linear operator. If the following conditions hold:
    \\(1) $\ker(A)$ is finite-dimensional; \\
    (2) $\mathrm{R}(A)$ is closed, \\
    then $A$ is called a semi-Fredholm operator.
\end{definition}

\begin{remark}[See Remark 2.2.2 in \cite{ref13}]\label{ker}
    The finite dimensionality of $\ker(A)$ implies that there exists a closed subspace $X \subset W$ such that $W = \ker(A) \oplus X$. Furthermore, $R(A) = A(X)$ forms a Banach space under $\|\cdot\|_{Z}$.
\end{remark}

\begin{theorem}[See Theorem 2.2.3 in \cite{ref13}]
    Let $W$ and $Z$ be Banach spaces, $A \in \mathcal{L}(W, Z)$, and $X$ as defined in Remark \ref{ker}. If $\ker A$ is finite-dimensional, then $A: W \to Z$ is a semi-Fredholm operator if and only if there exists a constant $C > 0$ such that for all $u \in X$,
    $$\|Au\|_{Z} \geq C\|u\|_{W}.$$
\end{theorem}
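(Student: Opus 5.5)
The statement is the cited characterization of semi-Fredholm operators: assuming $\dim\ker A<\infty$, with $W=\ker(A)\oplus X$ and $X$ closed, the range $R(A)$ is closed if and only if $\|Au\|_Z\ge C\|u\|_W$ for all $u\in X$. The plan is to restrict $A$ to the complement $X$ and argue through the open mapping theorem in one direction and Cauchy sequences in the other.

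\emph{Setup.} Write $A_X:=A|_X$. Since $X$ is closed in $W$, it is a Banach space. The map $A_X$ is injective, because $X\cap\ker A=\{0\}$. It also satisfies $A_X(X)=R(A)$: writing any $w\in W$ as $w=k+x$ with $k\in\ker A$ and $x\in X$ gives $Aw=Ax$.

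\emph{($\Leftarrow$).} Assume the lower bound holds. Take $Au_n\to z$ in $Z$ with $u_n\in X$.
\begin{itemize}
\item The bound gives $\|u_n-u_m\|_W\le C^{-1}\|Au_n-Au_m\|_Z$, so $(u_n)$ is Cauchy in $X$.
\item Hence $u_n\to u\in X$.
\item Continuity of $A$ gives $z=Au\in R(A)$.
\end{itemize}
So $R(A)$ is closed.

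\emph{($\Rightarrow$).} Assume $R(A)$ is closed. Then $R(A)$ is a Banach space with the norm of $Z$. The map $A_X:X\to R(A)$ is a bounded linear bijection between Banach spaces. By the open mapping (inverse mapping) theorem, $A_X^{-1}$ is bounded. Setting $C=\|A_X^{-1}\|^{-1}$ gives $\|u\|_W\le \|A_X^{-1}\|\,\|Au\|_Z$ for $u\in X$, which is the required estimate.

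\emph{Main obstacle.} The only delicate point is the existence of the closed complement $X$. It follows from finite dimensionality of $\ker A$ via Hahn--Banach (Remark \ref{ker}):
\begin{itemize}
\item Choose a basis $e_1,\dots,e_k$ of $\ker A$.
\item Extend the dual functionals to $f_i\in W^*$ with $f_i(e_j)=\delta_{ij}$.
\item Put $X=\bigcap_i\ker f_i$, which is closed as an intersection of kernels of continuous functionals.
\item The decomposition is $w=\sum_i f_i(w)e_i+\bigl(w-\sum_i f_i(w)e_i\bigr)$, with the first term in $\ker A$ and the second in $X$.
\end{itemize}
With this in place, both directions are routine.
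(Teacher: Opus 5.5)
Your proof is correct and is the standard argument for this characterization; the paper itself gives no proof, only the citation to \cite{ref13}. The open mapping theorem applied to the bijection $A|_X : X \to R(A)$ gives the lower bound, the Cauchy-sequence argument in $X$ gives closedness of $R(A)$, and the only cosmetic point is that $C=\|A_X^{-1}\|^{-1}$ is undefined when $X=\{0\}$, in which case any $C>0$ works trivially.
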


\begin{remark}\label{delta}
    Since $\|u\|_{H^2} \le C\|-\Delta u\|_{L^2}$ (see formula (1.3.26) in \cite{ref8}), the operator $-\Delta: H^2 \cap H_0^1(\Omega) \to L^2(\Omega)$ is semi-Fredholm.
\end{remark}

\begin{theorem}[See Theorem 2.2.5 in \cite{ref13}]\label{lemma2.4}
    Let $A: W \to Z$ be a semi-Fredholm operator and $G: W \to Z$ a compact operator. Then $A + G: W \to Z$ is also a semi-Fredholm operator.
\end{theorem}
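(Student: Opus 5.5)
The statement is Theorem \ref{lemma2.4}: if $A:W\to Z$ is semi-Fredholm and $G:W\to Z$ is compact, then $A+G$ is semi-Fredholm. The proof has two parts: show $\ker(A+G)$ is finite-dimensional, then show $R(A+G)$ is closed. Both rest on one a priori estimate.

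\textbf{Step 1: the estimate.} Decompose $W=\ker A\oplus X$ as in Remark \ref{ker}. Let $P$ be the bounded projection onto $\ker A$ along $X$. By the characterization theorem for semi-Fredholm operators, $\|Au\|_Z\ge C\|(I-P)u\|_W$ for all $u\in W$. Since $\ker A$ is finite-dimensional, $P$ is a finite-rank, hence compact, operator. Therefore
\[
\|u\|_W\le \|(I-P)u\|_W+\|Pu\|_W\le C^{-1}\|(A+G)u\|_Z+C^{-1}\|Gu\|_Z+\|Pu\|_W ,
\]
so $\|u\|_W\le c\,\|(A+G)u\|_Z+|u|$, where $|u|:=C^{-1}\|Gu\|_Z+\|Pu\|_W$ is a compact seminorm. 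Compact means that every bounded sequence has a subsequence that is Cauchy for $|\cdot|$.

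\textbf{Step 2: finite-dimensional kernel.} On $\ker(A+G)$ the estimate reads $\|u\|_W\le |u|$. Take a bounded sequence in $\ker(A+G)$. By compactness it has a subsequence that is Cauchy in $|\cdot|$. Applying the estimate to differences, that subsequence is Cauchy in $W$, and it converges in the closed subspace $\ker(A+G)$. So the closed unit ball of $\ker(A+G)$ is compact, and by Riesz's lemma $\dim\ker(A+G)<\infty$.

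\textbf{Step 3: closed range.} Choose a closed complement $Y$ with $W=\ker(A+G)\oplus Y$. I claim there is $c'>0$ with $\|(A+G)y\|_Z\ge c'\|y\|_W$ for all $y\in Y$. Suppose not. Then there are $y_n\in Y$ with $\|y_n\|_W=1$ and $(A+G)y_n\to0$. Pass to a subsequence that is $|\cdot|$-Cauchy; by the estimate it is Cauchy in $W$, so $y_n\to y\in Y$ with $\|y\|_W=1$. By continuity $(A+G)y=0$, so $y\in\ker(A+G)\cap Y=\{0\}$, a contradiction. Closedness then follows. If $(A+G)y_n\to z$ with $y_n\in Y$, the lower bound makes $(y_n)$ Cauchy, so $y_n\to y$ and $z=(A+G)y$. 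Hence $R(A+G)=(A+G)(Y)$ is closed.

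\textbf{Main obstacle.} The delicate step is the compactness argument by contradiction in Step 3. It needs the a priori estimate with the compact term, which requires handling the $\ker A$ component through the finite-rank projection. It also needs the existence of closed complements, which Remark \ref{ker} supplies because the kernels are finite-dimensional.
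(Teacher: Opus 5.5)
Your proof is correct and complete. The paper gives no argument for this statement; it is quoted from Theorem 2.2.5 of \cite{ref13}. So the only comparison is with the standard proof, and yours is that proof in slightly different packaging.

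The usual write-up works directly with sequences. Let $(u_n)$ be bounded with $(A+G)u_n$ convergent. Pass to a subsequence on which $Gu_n$ converges, so that $Au_n$ converges. Split $u_n=Pu_n+(I-P)u_n$. The lower bound on $X$ from the characterization theorem makes $((I-P)u_n)$ Cauchy, and $(Pu_n)$ has a convergent subsequence in the finite-dimensional space $\ker A$. That single property gives compactness of the unit ball of $\ker(A+G)$. By the same contradiction argument you use on a closed complement of $\ker(A+G)$, it also gives the lower bound, hence closed range.

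Your estimate $\|u\|_W\le C^{-1}\|(A+G)u\|_Z+|u|$, with the compact seminorm $|u|=C^{-1}\|Gu\|_Z+\|Pu\|_W$, encodes exactly this property once and for all. This makes both steps short and shows why further compact perturbations only enlarge the seminorm.

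Two details are worth stating explicitly.
\begin{itemize}
\item $P$ is bounded. This follows from the closed graph theorem, or from writing $P=\sum_i f_i(\cdot)\,e_i$ for a basis $(e_i)$ of $\ker A$ and Hahn--Banach functionals with $f_i(e_j)=\delta_{ij}$, which also produces $X=\bigcap_i\ker f_i$.
\item $G$ is linear, which you use when applying the estimate to differences $u_n-u_m$.
\end{itemize}

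Finally, once Step 2 gives $\dim\ker(A+G)<\infty$, the lower bound in Step 3 plus the ``if'' direction of the characterization theorem already yields closed range. Your last closedness argument is therefore a correct re-proof of that direction.
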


\subsection{Lojasiewicz Inequality}

\begin{theorem}[\cite{ref19}]\label{L-S}
    Let $f:\mathbb{R}^{m}\longrightarrow \mathbb{R}$ be analytic in a neighborhood of a critical point $a$ (i.e., $\nabla f(a)=0$). Then there exist constants $\theta\in (0,\frac{1}{2}]$, $\sigma>0$, and $C>0$, depending only on $f$ and $a$, such that whenever $\|x-a\|_{\mathbb{R}^{m}}\le \sigma$,
    $$|f(x)-f(a)|^{1-\theta}\le C\|\nabla f(x)\|_{\mathbb{R}^{m}}.$$
\end{theorem}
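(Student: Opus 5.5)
This statement is the classical finite-dimensional Łojasiewicz gradient inequality \cite{ref19}; the paper quotes it without proof and uses it only as input for the infinite-dimensional Łojasiewicz--Simon inequality. A self-contained proof is genuinely hard, so I would follow Łojasiewicz's semianalytic route rather than try to reprove it by elementary means.

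\emph{Normalization and trivial cases.} Translate so that $a=0$ and subtract a constant so that $f(0)=0$. If $f(x)\le 0$ on a neighbourhood, replace $f$ by $-f$ there. Suppose $0$ is isolated in the critical set and $f$ is nondegenerate, i.e.\ $\nabla^2f(0)$ is invertible. Then $\|\nabla f(x)\|\ge c\|x\|$ and $|f(x)|\le C\|x\|^2$, which gives the inequality with $\theta=\tfrac12$. This is the model case, and it explains why $\theta\le\tfrac12$ in general.

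\emph{General case via curve selection.} Argue by contradiction. Consider the semianalytic set
\[
Z_\varepsilon=\Bigl\{(x,s): \|x\|<\sigma,\ s=|f(x)|,\ \|\nabla f(x)\|\le s^{\,1-\varepsilon}\Bigr\}.
\]
Alternatively, work with the one-variable function
\[
\mu(r)=\inf\{\|\nabla f(x)\| : |f(x)|=r,\ \|x\|\le\sigma\}.
\]
\begin{enumerate}
\item By the Tarski--Seidenberg/Łojasiewicz structure theory for subanalytic functions, $\mu$ is subanalytic in $r$. It therefore admits a Puiseux expansion $\mu(r)\sim c\,r^{q}$ as $r\to0^+$, with $q$ rational, unless $\mu\equiv0$ near $0$.
\item The case $\mu\equiv 0$ near $0$ is excluded. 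By the curve selection lemma, the set where $f\ne0$ and $\nabla f=0$ would contain an analytic arc $\gamma(t)\to0$. Along it $\frac{d}{dt}f(\gamma(t))=\nabla f\cdot\gamma'=0$, so $f$ is constant on the arc and hence equals $f(0)=0$, a contradiction.
\item To bound the exponent $q$, select by curve selection an analytic arc $\gamma(t)$, $\gamma(0)=0$, realizing the infimum asymptotically. Write $f(\gamma(t))\sim a t^{k}$ and $\|\gamma'(t)\|\sim b\,t^{m}$ with $m\ge0$. Then
\[
\bigl|\tfrac{d}{dt}f(\gamma(t))\bigr|\le\|\nabla f(\gamma)\|\,\|\gamma'\|,
\]
so $\|\nabla f(\gamma(t))\|\gtrsim t^{k-1-m}$. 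With $r\sim t^k$ this gives $\|\nabla f(\gamma(t))\|\gtrsim r^{(k-1-m)/k}$, and since $m\ge0$ we get $q\le 1-\tfrac1k<1$.
\item Set $\theta=1-q$. Step 3 gives $\theta>0$, and shrinking $\theta$ if necessary we may assume $\theta\le\tfrac12$; the inequality is weaker for smaller $\theta$ when $|f|\le1$. Hence $|f(x)|^{1-\theta}\le C\|\nabla f(x)\|$ for $\|x\|\le\sigma$.
\end{enumerate}

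\emph{Main obstacle.} The hard part is step 1 together with the choice of realizing arc in step 3. These require the full subanalytic machinery: that $\mu$ is subanalytic, plus the curve selection lemma. In a paper of this type I would simply cite \cite{ref19} (and Bierstone--Milman) for these facts rather than reprove them. The remaining steps are elementary calculus along the arc.
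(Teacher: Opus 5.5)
The paper gives no proof of this statement. It quotes the result from Lojasiewicz \cite{ref19} and uses it only as a black box in the proof of Theorem~\ref{theorem2.1}, so there is no internal argument to compare yours against.

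Your outline is the standard subanalytic proof, essentially the Bierstone--Milman argument you allude to. Its ingredients are:
\begin{itemize}
\item subanalyticity of the fibrewise infimum $\mu$;
\item the Puiseux expansion of $\mu$;
\item curve selection combined with $\frac{d}{dt}f(\gamma(t))=\nabla f(\gamma(t))\cdot\gamma'(t)$, which forces $q<1$.
\end{itemize}
It is correct in substance, and the heavy inputs are exactly the ones you flag.

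Two details need tidying.

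\emph{The base point of the arcs.} The minimizers of $\|\nabla f\|$ on $\{|f|=r\}\cap\{\|x\|\le\sigma\}$ need not accumulate at $a=0$. Take $f(x,y)=x^2(1+y)$ with $\sigma<1$. On the level set one has $\|\nabla f\|^2=4r(1+y)+r^2(1+y)^{-2}$, which is increasing in $y$ for small $r$. So the minimizers accumulate at $(0,-\sigma)$, and no arc from $0$ realizes the infimum. The arcs in Steps 2 and 3 should therefore be selected at an accumulation point $x^*$ of the minimizers, which automatically satisfies $f(x^*)=0$ and $\nabla f(x^*)=0$. Your chain-rule computation never uses $\gamma(0)=a$, so Step 3 goes through unchanged. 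In Step 2 you conclude $f\circ\gamma\equiv f(x^*)=0$ instead of $f(0)=0$.

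\emph{Shrinking the ball at the end.} The bound $\mu(r)\ge c\,r^{q}$ holds only for $0<r<r_0$. You must shrink $\sigma$ until $|f|<\min\{r_0,1\}$ on the ball. This is harmless because shrinking the ball can only increase $\mu$.

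With these adjustments your argument proves the statement with $\theta=\min\{1-q,\tfrac12\}$.
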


This paper extends Theorem \ref{L-S} to obtain a Lojasiewicz-Simon Inequality for the energy functional of equation (\ref{1.1}), with detailed proofs provided in Section 5.3.

\subsection{Steady State equation}

\qquad The existence of solutions to the steady state equation (\ref{1.4}) has been discussed in Section 1.2 (Research status), where classical solutions exist when $\lambda$ is sufficiently small. This section presents a theorem showing that no solution exists when $\lambda$ is large.

\begin{theorem}[See Theorem 3.2 in \cite{ref2}]\label{tuilun}
    Let $\Omega\subset\mathbb{R}^{N}\ (N\ge 2)$ be a strictly star-shaped domain, i.e., there exists $\beta>0$ such that
    $$\nu(x)\cdot x\ge \beta\int_{\partial\Omega}\mathrm{d}x,\quad \forall x\in \partial\Omega,$$
    where $\nu(x)$ is the unit outward normal vector at $x$. If $$\lambda>\frac{N(1+|\Omega|)^4}{2\beta |\Omega|^2},$$
    then the steady state equation (\ref{1.4}) has no solution.
\end{theorem}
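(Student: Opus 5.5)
We prove the nonexistence statement of Theorem \ref{tuilun} by a Pohozaev identity combined with a lower bound on the nonlocal factor. The idea is to freeze the nonlocal term. Suppose $\phi$ solves (\ref{1.4}) and set
$$\mu=\frac{\lambda}{\left(1+\int_\Omega\frac{1}{1-\phi}\,\mathrm{d}x\right)^2}.$$
Then $\phi$ solves the local problem $-\Delta\phi=\mu(1-\phi)^{-2}$ with $0\le\phi<1$. Since $0\le \phi<1$, we have $\frac{1}{1-\phi}\ge 1$, so
$$\int_\Omega\frac{1}{1-\phi}\,\mathrm{d}x\ge|\Omega|.$$
This gives only an upper bound on $\mu$. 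To contradict large $\lambda$ we also need an upper bound on $\int_\Omega \frac{1}{1-\phi}$ in terms of $\mu$, or else a direct integral identity that involves $\lambda$ itself.

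The main step is the Pohozaev identity. Put $F(\phi)=\frac{1}{1-\phi}-1$, so that $F'(\phi)=(1-\phi)^{-2}$ and $F(0)=0$. Multiply the local equation by $x\cdot\nabla\phi$ and integrate by parts; this is standard for $C^2$ solutions on smooth star-shaped domains. It yields
$$\frac{N-2}{2}\int_\Omega|\nabla\phi|^2\,\mathrm{d}x+\frac12\int_{\partial\Omega}|\partial_\nu\phi|^2(x\cdot\nu)\,\mathrm{d}S=N\mu\int_\Omega F(\phi)\,\mathrm{d}x .$$
Multiplying the equation by $\phi$ gives
$$\int_\Omega|\nabla\phi|^2\,\mathrm{d}x=\mu\int_\Omega\frac{\phi}{(1-\phi)^2}\,\mathrm{d}x .$$

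To estimate the boundary term, use $x\cdot\nu\ge\beta|\partial\Omega|$ and Cauchy--Schwarz:
$$\int_{\partial\Omega}|\partial_\nu\phi|^2\,\mathrm{d}S\ \ge\ \frac{1}{|\partial\Omega|}\Bigl(\int_{\partial\Omega}\partial_\nu\phi\,\mathrm{d}S\Bigr)^2=\frac{1}{|\partial\Omega|}\Bigl(\mu\int_\Omega\frac{1}{(1-\phi)^2}\,\mathrm{d}x\Bigr)^2 .$$
The factors of $|\partial\Omega|$ cancel, and the boundary term is at least
$$\frac{\beta}{2}\,\mu^2\Bigl(\int_\Omega(1-\phi)^{-2}\,\mathrm{d}x\Bigr)^2 .$$

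Next we bound the right-hand side. For $N=2$ the gradient term vanishes. For $N=3$ the gradient term is nonnegative and may be dropped. In either case
$$\frac{\beta}{2}\,\mu^2 I_2^2\le N\mu\,(I_1-|\Omega|)\le N\mu I_1,\qquad I_k=\int_\Omega(1-\phi)^{-k}\,\mathrm{d}x .$$
By Cauchy--Schwarz, $I_1^2\le|\Omega| I_2$, so $I_2\ge I_1^2/|\Omega|$. Hence $\frac{\beta}{2}\mu I_1^4/|\Omega|^2\le N I_1$, that is,
$$\mu\le\frac{2N|\Omega|^2}{\beta I_1^3}.$$

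Substituting $\mu=\lambda/(1+I_1)^2$ gives
$$\lambda\le\frac{2N|\Omega|^2(1+I_1)^2}{\beta I_1^3}.$$
The right side is decreasing in $I_1$. Since $I_1\ge|\Omega|$, we get a bound of the form $\lambda\le C(N,\beta,|\Omega|)$, which contradicts $\lambda$ large. The constants produced this way need not match the stated threshold $\frac{N(1+|\Omega|)^4}{2\beta|\Omega|^2}$ exactly. That threshold suggests that in \cite{ref2} the inequalities are arranged differently, for example as $I_2\ge I_1^2/|\Omega|\ge|\Omega|$ together with $1+I_1\le$ something. I would adjust the Cauchy--Schwarz and Jensen steps to land on that threshold.

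The main obstacle is exactly this bookkeeping. We need an upper bound of $\lambda$ by a quantity that is monotone in $I_1$, and the arrangement has to produce the claimed $(1+|\Omega|)^4$ factor rather than a weaker one. A second point is regularity: the Pohozaev identity requires $\phi\in C^2(\bar\Omega)$. This holds because $\phi<1$ on the compact set $\bar\Omega$, which makes the right-hand side of the equation smooth, so elliptic regularity applies.
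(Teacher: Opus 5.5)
The paper itself gives no proof of this statement; it only quotes it from \cite{ref2}. So your Pohozaev argument has to stand on its own, and it does. In fact it proves \emph{more} than the theorem, so the ``bookkeeping obstacle'' you describe does not exist. The chain of steps is:
\begin{itemize}
\item the identity with $F(\phi)=\frac{1}{1-\phi}-1$ and $F(0)=0$;
\item the lower bound $\frac{\beta}{2}\mu^2 I_2^2$ for the boundary term, via $\int_{\partial\Omega}\partial_\nu\phi\,\mathrm{d}S=-\mu I_2$;
\item dropping $\frac{N-2}{2}\int_\Omega|\nabla\phi|^2\ge 0$, which is valid for every $N\ge 2$ and not only for $N=2,3$;
\item the inequality $I_2\ge I_1^2/|\Omega|$;
\item the fact that $s\mapsto(1+s)^2/s^3$ is decreasing, combined with $I_1\ge|\Omega|$.
\end{itemize}
Each step checks, and together they give, for any solution of (\ref{1.4}),
\[
\lambda\le\frac{2N(1+|\Omega|)^2}{\beta|\Omega|}.
\]

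What your write-up is missing is only the comparison with the stated threshold, which you left open. Since $(1+|\Omega|)^2\ge 4|\Omega|$,
\[
\frac{N(1+|\Omega|)^4}{2\beta|\Omega|^2}=\frac{2N(1+|\Omega|)^2}{\beta|\Omega|}\cdot\frac{(1+|\Omega|)^2}{4|\Omega|}\ge\frac{2N(1+|\Omega|)^2}{\beta|\Omega|}.
\]
So every $\lambda$ above the theorem's threshold violates your bound, and nonexistence follows. No rearrangement of the Cauchy--Schwarz or Jensen steps is needed. Trying to reproduce the reference's constant would only weaken your result, since your threshold is strictly smaller unless $|\Omega|=1$.

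As written, though, ending with ``a bound of the form $\lambda\le C(N,\beta,|\Omega|)$, which contradicts $\lambda$ large'' does not prove the statement with its explicit constant. You should compute $C$ explicitly and add the comparison above.

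Two minor points:
\begin{itemize}
\item The identity obtained by testing the equation with $\phi$ is never used and can be deleted.
\item Your regularity remark needs $\phi\in C(\bar\Omega)$, so that $\max_{\bar\Omega}\phi<1$. This is the natural meaning of a solution of (\ref{1.4}), but it should be stated.
\end{itemize}
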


\section{Local existence}

\qquad In this section, we prove the Theorem \ref{local existence}. We will truncate the equation to obtain the auxiliary equation, then construct the function space and use the contraction mapping principle to study the local existence of the solution of the auxiliary equation, and finally get the conclusion from the sobolev embedding theorem.

Denote $ A=-\triangle $, then $D(A)=H^{2}\cap H_{0}^{1}(\Omega)$, and $A$ generates a $C_0$-semigroup on $L^{2}(\Omega)$ (see the proof of Theorem 2.7.2 in \cite{ref8}), which is denoted as $\{Q(t);t \geq 0\}$. Thus we consider:
\begin{equation*}
u(t)=Q(t) u_{0}+\int_{0}^{t} Q(t-s) f(u(s)) \mathrm{d} s,
\end{equation*}

Note that $f(u)$ has a singular point $u=1$, so we consider the truncated function
\begin{equation*}
 g _{\delta}(u):=\left\{\begin{array}{ll}\frac{1}{1-u}, & \quad u \le 1-\delta ,
\\ \delta^{-1}, & \quad u>1-\delta,
\end{array}\right.
\end{equation*}
and
\begin{equation}
f_{\delta}(u):=\lambda\left(g_{\delta}(u)\right)^{2}\left(1+\int_{\Omega} g_{\delta}(u) d x\right)^{-2}.\label{048}
\end{equation}
Thus we have $f_{\delta} \in W^{1, \infty}(\mathbb{R})$ and
\begin{equation}
\left\|g_{\delta}\right\|_{L^{\infty}(\mathbb{R})} \le \delta^{-1}, \left\|g_{\delta}^{\prime}\right\|_{L^{\infty}(\mathbb{R})} \le \delta^{-2}.
\label{013}
\end{equation}

Now we consider the following auxiliary equation
\begin{equation}
\left\{\begin{array}{lll}
 u_{t}-\triangle u=f_{\delta}(u), \quad & (t,x)\in (0, \infty)\times\Omega  ,
\\  u(t, x)=0, \quad   &(t,x)\in (0, \infty)\times \partial \Omega,
\\ u(0, x)=u_{0}(x), \quad  &x\in  \Omega ,
\end{array}\quad \right. \label{014}
\end{equation}
and operator $B$
\begin{equation}
\begin{array}{lll}
&L^2(\Omega) &\to L^2(\Omega),
\\ &u  &\mapsto  Q(t) u_{0}+\int_{0}^{t} Q(t-s) f_{\delta}(u(s)) \mathrm{d} s.
\end{array}
\end{equation}

\begin{lemma}
    Let $$ Y_{M}:=\left\{u \in C\left([0, T] ; L^{2}(\Omega)\right), \|u\|_{Y}\le  M \right\} ,\quad  $$
    where $\|u\|_{Y}=\|u\|_{L^{\infty}\left([0, T] ; L^{2}(\Omega)\right)}$, $M :=\left\|u_{0}\right\|_{L^{2}} +1$. Then $f_\delta$ (as defined in (\ref{048})) is Lipschitz continuous in $Y_M$.
\end{lemma}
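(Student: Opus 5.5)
We will show that there is a constant $L=L(\lambda,\delta,|\Omega|)$ with
\[
\|f_\delta(u)-f_\delta(v)\|_{L^2}\le L\|u-v\|_{L^2}
\]
for all $u,v\in L^2(\Omega)$, and hence, pointwise in $t$, $\|f_\delta(u)-f_\delta(v)\|_{Y}\le L\|u-v\|_Y$ on $Y_M$. In fact the bound holds on all of $L^2$; the restriction to $Y_M$ costs nothing. The first step is to record that $g_\delta$ is globally Lipschitz on $\mathbb{R}$ with constant $\delta^{-2}$, which is (\ref{013}), and that $0<g_\delta\le\delta^{-1}$. Here positivity holds because $1-u>0$ on the branch $u\le 1-\delta$. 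Writing $I(u)=\int_\Omega g_\delta(u)\,dx$, we have $0\le I(u)\le \delta^{-1}|\Omega|$. Consequently the factor $h(u):=(1+I(u))^{-2}$ lies in $(0,1]$.

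Next we split the difference into a local part and a nonlocal part:
\[
f_\delta(u)-f_\delta(v)=\lambda\bigl(g_\delta(u)^2-g_\delta(v)^2\bigr)h(u)+\lambda g_\delta(v)^2\bigl(h(u)-h(v)\bigr).
\]
For the local part, use $|g_\delta(u)^2-g_\delta(v)^2|\le 2\delta^{-1}|g_\delta(u)-g_\delta(v)|\le 2\delta^{-3}|u-v|$ pointwise, together with $h\le1$. This bounds its $L^2$ norm by $2\lambda\delta^{-3}\|u-v\|_{L^2}$.

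For the nonlocal part, note that $\phi(s)=(1+s)^{-2}$ has derivative bounded by $2$ on $s\ge0$. Therefore
\[
|h(u)-h(v)|\le 2|I(u)-I(v)|\le 2\delta^{-2}\int_\Omega|u-v|\,dx\le 2\delta^{-2}|\Omega|^{1/2}\|u-v\|_{L^2},
\]
where the last step is Cauchy--Schwarz. Multiplying by $\|\lambda g_\delta(v)^2\|_{L^2}\le\lambda\delta^{-2}|\Omega|^{1/2}$ gives the bound $2\lambda\delta^{-4}|\Omega|\,\|u-v\|_{L^2}$. Summing the two parts,
\[
L=2\lambda\delta^{-3}+2\lambda\delta^{-4}|\Omega|,
\]
and taking the supremum over $t\in[0,T]$ gives the claim on $Y_M$.

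The only delicate point is the nonlocal factor: we must make sure $1+I(u)$ stays bounded away from zero. This is where positivity of $g_\delta$ is used, and it is why the truncation is defined with the value $\delta^{-1}$ rather than by an arbitrary extension. It also requires converting the $L^1$ difference into an $L^2$ difference, which is possible because $\Omega$ is bounded. Everything else is a direct pointwise estimate.
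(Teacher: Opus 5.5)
Your proof is correct and follows essentially the same route as the paper: you split $f_\delta(u)-f_\delta(v)$ into a local part and a nonlocal part, bound both pointwise using $0<g_\delta\le\delta^{-1}$ and $|g_\delta'|\le\delta^{-2}$, and pass from $\int_\Omega|u-v|\,dx$ to $\|u-v\|_{L^2}$ by Cauchy--Schwarz, arriving at the paper's constant $2\lambda\delta^{-3}+2\lambda\delta^{-4}|\Omega|$. The only cosmetic difference is that you handle $(1+I(u))^{-2}-(1+I(v))^{-2}$ with the mean value theorem for $(1+s)^{-2}$ on $s\ge 0$, whereas the paper expands this difference algebraically into two terms, each contributing $\lambda\delta^{-4}\int_\Omega|u-v|\,dx$.
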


\begin{proof}
    Note that
    $\forall u_1, u_2\in Y_M$, we have
    \begin{equation}
    \begin{aligned}
    &\left|f_{\delta}(u_1)-f_{\delta}\left(u_2\right)\right|
    \\=&\left|\frac{\lambda g_{\delta}^{2}(u_1) }{\left(1+\int_{\Omega} g_{\delta}(u_1) \mathrm{d} x\right)^{2}}-\frac{\lambda g_{\delta}^{2}(u_2) }{\left(1+\int_{\Omega} g_{\delta}(u_2) \mathrm{d} x\right)^{2}}\right|
    \\=&\left|\frac{\lambda\left( g_{\delta}^{2}(u_1) -
     g_{\delta}^{2}(u_2) + g_{\delta}^{2}(u_2) \right)}{\left(1+\int_{\Omega} g_{\delta}(u_1) \mathrm{d} x\right)}-\frac{\lambda g_{\delta}^{2}(u_2) }{\left(1+\int_{\Omega} g_{\delta}(u_2) \mathrm{d} x\right)^{2}}\right|
    \\=  &\left| \frac{\lambda\left(g_{\delta}(u_1)+g_{\delta}\left(u_2\right)\right)}{\left(1+\int_{\Omega} g_{\delta}(u_1) \mathrm{d} x\right)^{2}} \cdot\left(g_{\delta}(u_1)-g_{\delta}\left(u_2\right)\right) \right.
    \\ -&\frac{\lambda g_{\delta}^{2}\left(u_2\right) \cdot \int_{\Omega}\left(g_{\delta}(u_1)-g_{\delta}\left(u_2\right)\right) \mathrm{d} x}{\left(1+\int_{\Omega} g_{\delta}(u_1) \mathrm{d} x\right)^{}\left(1+\int_{\Omega} g_{\delta}\left(u_2\right) \mathrm{d} x\right)^{2}}
    \\-&\left. \frac{\lambda g_{\delta}^{2} \left(u_2\right)  \cdot \int_{\Omega}\left(g_{\delta}(u_1)-g_{\delta}\left(u_2\right)\right) \mathrm{d} x}{\left(1+\int_{\Omega} g_{\delta}(u_1) \mathrm{d} x\right)^{2}\left(1+\int_{\Omega} g_{\delta}\left(u_2\right) \mathrm{d} x\right)^{}} \right|
    \\ \le &2 \lambda\left(\delta^{-3}\left|u_1-u_2\right|+\delta^{-4} \int_{\Omega}\left|u_1-u_2\right| \mathrm{d} x\right),
    \end{aligned}\label{015}
    \end{equation}
    then
    \begin{equation*}
    \begin{aligned}
    &\left\|f_{\delta}(u_1)-f_{\delta}\left(u_2\right)\right\|_{L^{2}(\Omega)}
    \\ = &  \left\| f_{\delta}(u_1(s)) - f_{\delta}(u_2(s))  \right\|_{ L^{2}\left(\Omega\right)}
    \\ \le &   \left\|2 \lambda\left(\delta^{-3}\left|u_1-u_2\right|+\delta^{-4} \int_{\Omega}\left|u_1-u_2\right| \mathrm{d} x\right)\right\|_{ L^{2}\left(\Omega\right)}
    \\ \le &    \left\|2 \lambda\delta^{-3}\left|u_1-u_2\right|\right\|_{ L^{2}\left(\Omega\right)} + \left\|2 \lambda\delta^{-4} \int_{\Omega}\left|u_1-u_2\right| \mathrm{d} x\right\|_{ L^{2}\left(\Omega\right)}
    \\ \le &   2\lambda\delta^{-3}\left\|u_1-u_2\right\|_{ L^{2}\left(\Omega\right)} + 2\lambda\delta^{-4}|\Omega|\left(\int_{\Omega}\left|u_1-u_2\right|^2 \mathrm{d} x\right)^{\frac{1}{2}}
    \\ = &    \left( 2\lambda\delta^{-3} + 2\lambda\delta^{-4}|\Omega|\right)\left\|u_1-u_2\right\|_{ L^{2}\left(\Omega\right)}.
    \end{aligned}
    \end{equation*}
    The lemma is proved.
\end{proof}

On this basis, the classical theory of evolution equations (such as Theorem 2.5.4 of \cite{ref8}) can guarantee the existence of solutions on a certain interval. In order to more clearly show the existence interval of local solutions, we give a detailed compression process in the proof of the following proposition.

\begin{proposition}
    $\exists\  T^*>0$, when $T<T^*$, equation (\ref{1.4}) has a unique classical solution on $[0,T]$.
    \label{lem 01}
\end{proposition}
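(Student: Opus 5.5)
We read the proposition as being about the truncated auxiliary problem (\ref{014}), not the steady state (\ref{1.4}): the claim is that there is $T^*>0$ such that for $T<T^*$, (\ref{014}) has a unique classical solution on $[0,T]$. The plan is to apply the contraction mapping principle to the operator $B$ on the closed set $Y_M$ of the Banach space $C([0,T];L^2(\Omega))$, with $M=\|u_0\|_{L^2}+1$. We then upgrade the resulting mild solution to a classical one using Lemmas \ref{strong exists} and \ref{classic exists 2}.

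\textbf{Step 1: $B$ maps $Y_M$ into itself.} The semigroup $Q(t)$ generated by $-\Delta$ with Dirichlet conditions is a contraction on $L^2(\Omega)$, so $\|Q(t)\|\le 1$. By (\ref{013}) we have $0\le f_\delta\le \lambda\delta^{-2}$ pointwise, hence $\|f_\delta(u(s))\|_{L^2}\le \lambda\delta^{-2}|\Omega|^{1/2}$. This gives
\[
\|Bu(t)\|_{L^2}\le \|u_0\|_{L^2}+T\lambda\delta^{-2}|\Omega|^{1/2}\le M
\qquad\text{whenever } T\le \delta^{2}\big/\big(\lambda|\Omega|^{1/2}\big).
\]
Continuity of $t\mapsto Bu(t)$ in $L^2$ follows from strong continuity of $Q$ and the boundedness of the integrand.

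\textbf{Step 2: $B$ is a contraction.} Write $L=2\lambda\delta^{-3}+2\lambda\delta^{-4}|\Omega|$ for the Lipschitz constant from the preceding lemma. Then
\[
\|Bu_1-Bu_2\|_Y\le TL\,\|u_1-u_2\|_Y .
\]
So we take
\[
T^*=\min\Bigl\{\delta^2\big/\big(\lambda|\Omega|^{1/2}\big),\ 1/L\Bigr\},
\]
which depends only on $\Omega,\delta,\lambda$. For $T<T^*$ the contraction mapping principle gives a unique fixed point in $Y_M$, that is, a mild solution.

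For uniqueness among all mild solutions, not only those in $Y_M$, we use a Gronwall argument with the global Lipschitz bound. That bound holds on all of $L^2$, because the estimate (\ref{015}) never used the constraint $\|u\|_Y\le M$.

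\textbf{Step 3: regularity.} Since $u_0\in H^2\cap H_0^1=D(A)$ and $f_\delta$ is Lipschitz in $u$ and independent of $t$, Lemma \ref{strong exists} shows the mild solution is a strong solution and is Lipschitz in $t$.

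To reach a classical solution we need more. Lemma \ref{classic exists 2} requires $f_\delta$ to be $C^1$ as a map on $L^2$, but the truncation $g_\delta$ has a kink at $u=1-\delta$. So instead we use Lemma \ref{classic exists}, treating $h(t):=f_\delta(u(t))$ as a given forcing term. Since $u$ is Lipschitz in $t$ with values in $L^2$ and $f_\delta$ is Lipschitz on $L^2$, the map $t\mapsto h(t)$ is Lipschitz into $L^2$. Lemma \ref{classic exists} then gives a unique classical solution of $v_t+Av=h$ with $v(0)=u_0$, and it is represented by the same Duhamel formula, so $v=u$. Hence $u\in C([0,T];D(A))\cap C^1([0,T];L^2)$.

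\textbf{Main obstacle.} The only delicate point is this last regularity step, because $f_\delta$ is merely Lipschitz and not $C^1$. The route through Lemma \ref{classic exists} with the frozen forcing $h$ is designed to avoid it. If that lemma is read strictly, with $X$ required to be reflexive, this is fine here since $L^2$ is a Hilbert space.
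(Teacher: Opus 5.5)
The contraction and regularity steps are sound, but you stop one step short of what the proposition asserts. The label ``(\ref{1.4})'' is indeed a slip, but it stands for the original MEMS problem (\ref{1.1}), not for (\ref{014}). Three things show this: the paper's proof ends with ``$f_\delta(u)\equiv f(u)$. At this time, equation (\ref{014}) is equivalent to equation (\ref{1.4})''; the same slip recurs in the proof of Theorem \ref{local existence}; and this proposition is what supplies the first half of that theorem for (\ref{1.1}).

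The truncation is only a device. The missing step is to show that on a short interval the fixed point stays below the truncation level, $u\le 1-\delta$, so that $f_\delta(u)=f(u)$ and $u$ actually solves (\ref{1.1}). Your proposal never does this, and it never uses the hypothesis $\|u_0\|_{L^\infty}\le 1-2\delta$.

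The paper closes the gap using $u\in C([0,T];H^2)$ and $H^2\hookrightarrow L^\infty$ for $N\le 3$. This gives $T_1$ with $\|u(t)-u_0\|_{L^\infty}\le\delta$ on $[0,T_1]$, and the paper then takes $\widetilde T=\min\{T_0,T_1\}$. A cleaner route is comparison in the Duhamel formula. $Q(t)$ is positivity preserving with $Q(t)\mathbf 1\le \mathbf 1$, and $0<f_\delta\le\lambda\delta^{-2}$, so
\[
u(t)\le \|u_0\|_{L^\infty}+\lambda\delta^{-2}t\le 1-2\delta+\lambda\delta^{-2}t\le 1-\delta\qquad\text{for } 0\le t\le \delta^{3}/\lambda .
\]
This time depends only on $\delta$ and $\lambda$. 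The paper's $T_1$ instead comes from the continuity of one particular trajectory, so it is not controlled by $\Omega,\delta,\lambda$ alone. Yet Theorem \ref{local existence} and its continuation argument need exactly that uniformity.

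Uniqueness for (\ref{1.1}) then follows from your global-Lipschitz Gronwall remark. Any classical solution $\tilde u$ of (\ref{1.1}) on $[0,T]$ is continuous on $[0,T]\times\bar\Omega$ and stays below $1$. Hence $\max\tilde u\le 1-\delta'$ for some $\delta'\in(0,\delta]$, so $u$ and $\tilde u$ both solve the $\delta'$-truncated problem and therefore coincide.

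Where your argument overlaps the paper's, yours is the more careful one. In the self-map estimate the paper bounds $(1+\int_\Omega g_\delta(u)\,dx)^{-2}$ above by $(1+\delta^{-1}|\Omega|)^{-2}$. That goes the wrong way: $g_\delta\le\delta^{-1}$ gives a lower bound on that factor. Your bound by $1$ is correct, as is your $|\Omega|^{1/2}$ for the $L^2$ norm of a constant.

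For regularity the paper invokes Lemma \ref{classic exists 2}, which needs a $C^1$ nonlinearity. Your route is valid on the reflexive space $L^2$: Lemma \ref{strong exists}, then Lemma \ref{classic exists} with the frozen forcing $h(t)=f_\delta(u(t))$. The obstruction you identified is actually worse than the kink at $1-\delta$. The local factor $u\mapsto g_\delta(u)^2$ is a superposition operator, and such operators are $C^1$ from $L^2(\Omega)$ to $L^2(\Omega)$ only when affine. So Lemma \ref{classic exists 2} would fail on $X=L^2$ even for a smooth truncation.
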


\begin{proof}
    $\forall u\in Y_M$, we have
    \begin{equation*}
    \begin{aligned}
    &\left\|Bu\right\| _{L^{\infty}\left([0, T] ; L^{2}(\Omega)\right)}=\left\|Q(t) u_{0}+\int_{0}^{t} Q(t-s) f_{\delta}(u(s)) \mathrm{d} s\right\| _{L^{\infty}\left([0, T] ; L^{2}(\Omega)\right)}
    \\ \le &  \left\|u_{0}\right\| _{ L^{2}\left(\Omega\right)}+\left\|\int_{0}^{t} Q(t-s) f_{\delta}(u(s)) \mathrm{d} s\right\|_{L^{\infty}\left([0, T] ; L^{2}(\Omega)\right)}
    \\ \le & \left\|u_{0}\right\|_{ L^{2}\left(\Omega\right)}+\left\|\int_{0}^{t} \left\|Q(t-s) f_{\delta}(u(s))\right\|_{ L^{2}\left(\Omega\right)} \mathrm{d} s \right\|_{L^{\infty}\left([0, T] \right)}
    \\ \le & \left\|u_{0}\right\|_{ L^{2}\left(\Omega\right)}+\left\|\int_{0}^{t} \left\|f_{\delta}(u(s))\right\|_{ L^{2}\left(\Omega\right)} \mathrm{d} s \right\|_{L^{\infty}\left([0, T] \right)}
    \\= & \left\|u_{0}\right\|_{ L^{2}\left(\Omega\right)}+\left\|\int_{0}^{t} \left\|\lambda\left(g_{\delta}(u)\right)^{2}\left(1+\int_{\Omega} g_{\delta}(u) d x\right)^{-2}\right\|_{ L^{2}\left(\Omega\right)} \mathrm{d} s \right\|_{L^{\infty}\left([0, T] \right)}
    \\ \le & \left\|u_{0}\right\|_{ L^{2}\left(\Omega\right)}+\left\|\int_{0}^{t} \left\|\lambda\left(\delta^{-1}\right)^{2}\left(1+\delta^{-1}|\Omega|\right)^{-2}\right\|_{ L^{2}\left(\Omega\right)} \mathrm{d} s \right\|_{L^{\infty}\left([0, T] \right)}
    \\ \le & \left\|u_{0}\right\|_{ L^{2}\left(\Omega\right)}
    +T|\Omega|\lambda\delta^{-2}
    \left(1+\delta^{-1}|\Omega|\right)^{-2},
    \end{aligned}
    \end{equation*}
    on the other hand, $\forall u_1, u_2\in Y_m$, we have
    \begin{equation*}
    \begin{aligned}
    &\left\|Bu_1-Bu_2\right\|_{L^{\infty}\left([0, T] ; L^{2}(\Omega)\right)}
    \\=&\left\|\left(Q(t) u_{0}+\int_{0}^{t} Q(t-s) f_{\delta}(u_1(s)) \mathrm{d} s\right)-\left(Q(t) u_{0}+\int_{0}^{t} Q(t-s) f_{\delta}(u_2(s)) \mathrm{d} s\right)\right\|_{L^{\infty}\left([0, T] ; L^{2}(\Omega)\right)}
    \\=&\left\|\int_{0}^{t} Q(t-s) f_{\delta}(u_1(s)) -Q(t-s) f_{\delta}(u_2(s)) \mathrm{d} s\right\|_{L^{\infty}\left([0, T] ; L^{2}(\Omega)\right)}
    \\ \le &  \left\|\int_{0}^{t} \left\|Q(t-s) \left( f_{\delta}(u_1(s)) - f_{\delta}(u_2(s))\right)  \right\|_{ L^{2}\left(\Omega\right)} \mathrm{d} s \right\|_{L^{\infty}\left([0, T] \right)}
    \\ \le &  \left\|\int_{0}^{t} \left\| f_{\delta}(u_1(s)) - f_{\delta}(u_2(s))  \right\|_{ L^{2}\left(\Omega\right)} \mathrm{d} s \right\|_{L^{\infty}\left([0, T] \right)}
    \\ \le &  \left\|\int_{0}^{t}  \left( 2\lambda\delta^{-3} + 2\lambda\delta^{-4}|\Omega|\right)\left\|u_1-u_2\right\|_{ L^{2}\left(\Omega\right)} \mathrm{d} s \right\|_{L^{\infty}\left([0, T] \right)}
    \\ = &  \sup_{t\in [0, T]}ess \int_{0}^{t}  \left( 2\lambda\delta^{-3} + 2\lambda\delta^{-4}|\Omega|\right)\left\|u_1-u_2\right\|_{ L^{2}\left(\Omega\right)} \mathrm{d} s
    \\ \le &  T\sup_{t\in [0, T]}ess    \left( 2\lambda\delta^{-3} + 2\lambda\delta^{-4}|\Omega|\right)\left\|u_1-u_2\right\|_{ L^{2}\left(\Omega\right)}
    \\ = &  T  \left( 2\lambda\delta^{-3} + 2\lambda\delta^{-4}|\Omega|\right)\left\|u_1-u_2\right\|_{L^{\infty}\left([0, T] ; L^{2}(\Omega)\right)},
    \end{aligned}
    \end{equation*}
    we take $T_0=\min\left\{\left(|\Omega|\lambda\delta^{-2}
    \left(1+\delta^{-1}|\Omega|\right)^{-2}\right)^{-1},
    \left( 2\lambda\delta^{-3} + 2\lambda\delta^{-4}|\Omega|\right)^{-1}\right\}$, when $T<T_0$, $B$ is a contraction operator on $Y_{M}$. According to contraction mapping principle, equation (\ref{014}) has a unique mild solution
    \begin{equation*}
    \begin{aligned}
     u \in C\left([0, T];  L^{2}(\Omega)\right).
    \end{aligned}
    \end{equation*}
    Further using Lemma \ref{classic exists 2}, we can obtain that $u$ is a classical solution of equation (\ref{014}), that is,
    \begin{equation*}
    \begin{aligned}
        u \in C\left([0, T]; H^{2}(\Omega) \cap H_{0}^{1}(\Omega)\right) \cap C^{1}\left([0, T];  L^{2}(\Omega)\right).
    \end{aligned}
    \end{equation*}

    By sobolev embedding theorem, when $1\le N\le 3$, $H^2(\Omega)$ is continuously embedded into $L^{\infty}(\Omega)$. Therefore, for $\delta >0$, $\exists T_{1}>0$, such that when $0<t<T_{1}$,
    \begin{equation*}
    \left\| u(t)-u_0\right\|_{ L^{\infty}\left(\Omega\right)}
    \le \delta.
    \end{equation*}

    Thus $u(t)\le 1-\delta ,\quad \forall t\in [0, T_1]$. Let $\widetilde{T}=\min\{T_0, T_1\}$, then when $T<\widetilde{T}$, $f_\delta(u)\equiv f(u)$. At this time, equation (\ref{014}) is equivalent to equation (\ref{1.4}), then equation (\ref{1.4}) has a unique classical solution
    \begin{equation*}
        u \in C\left([0, T]; H^{2}(\Omega) \cap H_{0}^{1}(\Omega)\right) \cap C^{1}\left([0, T];  L^{2}(\Omega)\right).
    \end{equation*}
\end{proof}

Next we prove Theorem \ref{local existence}.

\begin{proof}[Proof of Theorem \ref{local existence}]
    The first half of the theorem has been proved in proposition \ref{lem 01}.\\
    We consider a family of closed sets $\{I_n\}$ on which equation (\ref{1.4}) has a solution
    $$u_n\in C\left(I_n; H^{2}(\Omega) \cap H_{0}^{1}(\Omega)\right) \cap C^{1}\left(I_n;  L^{2}(\Omega)\right).$$
And we define the following partial order relation $"\preceq"$:\\
If $I_p\subset I_q\subset [0, +\infty)$, and $\forall t\in I_p, u_p=u_q$, then $I_p\preceq I_q$.\\
Thus we get a non-empty partial order set $P=(I_n\subset[0, +\infty);\preceq)$. By Zorn's lemma, there exists a maximum element in $P$, denoted by $I_0$.

Assume $I_0=[0, T^*],\quad 0<T^*<\infty$, then the solution satisfies
$$u^{*}\in C\left(I_0; H^{2}(\Omega) \cap H_{0}^{1}(\Omega)\right) \cap C^{1}\left(I_0,  L^{2}(\Omega)\right),$$
and $\displaystyle 0<\max_{x\in \Omega}u^{*}{(T^*, x )}<1$.

Therefore, for $\displaystyle \delta^{'}=(1-\max_{x\in \Omega}u^{*}{(T^*, x)})/2$, we have $u^{*}(T^*)<1-2\delta^{'}$ and consider the equation
\begin{equation}
\left\{\begin{array}{lll}
 v_{t}=\Delta v+f(v),\quad &(t,x) \in  (0, \infty)\times\Omega,
\\    v=0 ,\quad&(t,x) \in (0, \infty)\times \partial \Omega, 	
\\  v(0, x)=u^{*}(T^*, x),\quad & x \in \Omega.
\end{array}\quad \right. \label{019}
\end{equation}
From the proof of the local existence of the solution, we obtain that $\exists\  T_2=T_2(\delta^{'}, \lambda, \Omega)>0$, equation (\ref{019}) has a solution
$$v_1\in C\left([0, T_2]; H^{2}(\Omega) \cap H_{0}^{1}(\Omega)\right) \cap C^{1}\left([0, T_2];  L^{2}(\Omega)\right).$$
Thus $I_0\preceq[0, T^*+T_2]$, which contradicts the assumption that $I_0$ is a maximum element, then $I_0=[0, T^*), T^*>0$.

We also assume that $T^*<\infty$, $u$ is a solution of equation (\ref{1.1}) and satisfies
\begin{equation}
\left\{\begin{array}{l}u \in C\left(I_n; H^{2}(\Omega) \cap H_{0}^{1}(\Omega)\right) \cap C^{1}\left(I_n;  L^{2}(\Omega)\right),
\\ \displaystyle\sup _{(t,x) \in [0, T^*) \times\Omega} u(t, x) \le 1-\sigma,\end{array}\right.
\end{equation}
where $\sigma \in(0, 1)$,
then $\exists \ \{t_n\}\nearrow T^*,\quad n\to \infty$, such that $\displaystyle \sup_{x\in \Omega} u(t_n, x)\le 1-\sigma$.

We consider the equation
\begin{equation}
\left\{\begin{array}{lll}
 v_{t}=\Delta v+f(v),\quad & (t,x) \in (0, \infty) \times\Omega,
\\    v=0 ,\quad& (t,x) \in  (0, \infty) \times \partial \Omega 	,
\\  v(0, x)=u(t_n, x) ,\quad& x \in \Omega.
\end{array}\quad \right. \label{020}
\end{equation}

From the proof of the local existence of the solution, we obtain that $\exists\  T_3=T_3(\sigma, \lambda, \Omega)>0$, equation (\ref{020}) has a solution
$$v_2\in C\left([0, T_3]; H^{2}(\Omega) \cap H_{0}^{1}(\Omega)\right) \cap C^{1}\left([0, T_3];  L^{2}(\Omega)\right),\quad \forall n\in N. $$
Since $\displaystyle \lim_{n\to \infty}t_n=T^*$, we have that $\exists\  N>0$, when $n>N$, $T^*-t_n<T_3 $.

Let
\begin{equation}
\widetilde{u}(t)=\left\{\begin{aligned}
& u(t),\qquad &[0, t_m],
\\ & v(t-t_m),\qquad &[t_m, t_m+T_3],
\end{aligned}\quad \right. \label{021}
\end{equation}
where $m=N+1$.\\
Next we will prove
\begin{equation}
	\widetilde{u}(t)=Q(t) u_{0}+\int_{0}^{t} Q(t-s) f(\widetilde{u}(s)) \mathrm{d} s,\text{ }\forall t\in[0, t_m+T_3].\label{022}
\end{equation}
Note that
\begin{equation}
u(t)=Q(t) u_{0}+\int_{0}^{t} Q(t-s) f(u(s)) \mathrm{d} s,\text{ }\forall t\in[0, t_m],
\label{023}
\end{equation}
and
\begin{equation}
v(t)=Q(t) u(t_m, x)+\int_{0}^{t} Q(t-s) f(v(s)) \mathrm{d}s ,\text{ }\forall  t\in[0, T_3].\label{024}
\end{equation}
By (\ref{021}), we have
\begin{equation}
	\widetilde{u}(t)=Q(t) u_{0}+\int_{0}^{t} Q(t-s) f(\widetilde{u}(s)) \mathrm{d} s,\text{ }\forall  t\in[0, t_m].\label{025}
\end{equation}
For $t\in [0, T_3]$, denote $t'=t+t_m\in [t_m, t_m+T_3]$,
\begin{equation}
	\begin{aligned}
	& \widetilde{u}\left(t'\right)=\widetilde{u}\left(t+t_{m}\right)=v(t)=Q(t) u\left(t_{m}, x\right)+\int_{0}^{t} Q(t-s) f(v(s)) d s
\\=& Q(t)\left(Q\left(t_{m}\right) u_{0}+\int_{0}^{t_{m}} Q\left(t_{m}-s\right) f(u(s)) d s\right)+\int_{0}^{t} Q(t-s) f(v(s)) d s \\=& Q\left(t+t_{m}\right) u_{0}+\int_{0}^{t_{m}} Q\left(t+t_{m}-s\right) f(u(s)) d s +\int_{t_{m}}^{t+t_{m}} Q\left(t+t_{m}-s\right) f\left(v\left(s-t_{m}\right)\right) d s
\\=& Q\left(t+t_{m}\right) u_{0}+\int_{0}^{t_{m}} Q\left(t+t_{m}-s\right) f(\widetilde{u}(s)) d s +\int_{t_{m}}^{t+t_{m}} Q\left(t+t_{m}-s\right) f(\widetilde{u}(s)) d s
\\=& Q\left(t+t_{m}\right) u_{0}+\int_{0}^{t+t_{m}} Q\left(t+t_{m}-s\right) f(\widetilde{u}(s)) d s
\\=& Q\left(t'\right) u_{0}+\int_{0}^{t'} Q\left(t'-s\right) f(\widetilde{u}(s)) d s.
	\end{aligned}\label{026}
\end{equation}
Therefore, $\widetilde{u}$ is a solution of equation (\ref{1.4}) and
$$\widetilde{u} \in C\left([0, t_m+T_3]; H^{2}(\Omega) \cap H_{0}^{1}(\Omega)\right) \cap C^{1}\left([0, t_m+T_3];  L^{2}(\Omega)\right),\quad\forall m\in N. $$
Thus $I_0\preceq [0, t_m+T_3]$, which contradicts the assumption that $I_0$ is a maximum element.\\
Theorem \ref{local existence} is proved.
\end{proof}

\section{Global existence}

\qquad In this section we prove Theorem \ref{global existence}. We will use the local existence results to obtain the global existence of the solution under certain conditions through a series of estimates, and illustrate that the solution converges to the minimal steady-state solution at an exponential rate with $t\to \infty$.

\begin{lemma}[\cite{lia}]
    If $\Omega$ is a strictly star-shaped region in $\mathbb{R}^{N}$ ($N \geq 1$), then there exists two constants $0<\lambda_{*} \le \lambda^{*}<\infty$, and when $\lambda<\lambda_{*}$, equation (\ref{1.4}) has at least one minimal solution $w_{\lambda}$ that satisfies $\forall x \in \Omega$, $w_{\lambda}(x)<1$. We call this solution the minimal steady-state solution of equation (\ref{1.1}). When $\lambda>\lambda^{*}$, equation (\ref{1.4}) has no solution. In particular, when $N=1$, $\lambda_{*} = \lambda^{*}$.
\label{sou 1}
\end{lemma}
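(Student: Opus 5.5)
The lemma combines an existence statement for small $\lambda$ with a nonexistence statement for large $\lambda$. I will prove the two parts separately.

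\emph{Existence of a minimal solution for small $\lambda$.} I will reduce to the local problem (\ref{1.3}) by monotone iteration. Let $\mu^*$ be the critical value for (\ref{1.3}). For any $\phi$ with $0\le\phi<1$ we have $\int_\Omega\frac{1}{1-\phi}\,\mathrm{d}x\ge|\Omega|$, so the nonlocal factor lies in $(0,(1+|\Omega|)^{-2}]$. Set $\lambda_*:=(1+|\Omega|)^2\mu^*$ and take $\lambda<\lambda_*$. Let $\bar\phi$ be the minimal solution of (\ref{1.3}) with $\mu=\lambda(1+|\Omega|)^{-2}<\mu^*$. It satisfies $0\le\bar\phi<1$, and since the nonlocal factor is at most $(1+|\Omega|)^{-2}$, $\bar\phi$ is a supersolution of (\ref{1.4}).

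Starting from $\phi_0=0$, I define $\phi_{k+1}$ as the solution of the linear Dirichlet problem
$$-\Delta\phi_{k+1}=\frac{\lambda}{(1-\phi_k)^2\bigl(1+\int_\Omega\frac{1}{1-\phi_k}\,\mathrm{d}x\bigr)^2}.$$
Here comparison is the main obstacle, because the right-hand side is \emph{not} monotone in $\phi_k$: the pointwise factor increases with $\phi_k$ while the nonlocal factor decreases. I handle this in two steps.

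First, the bound $\phi_k\le\bar\phi$ holds for every $k$ by the maximum principle, since each right-hand side is at most $\mu(1-\phi_k)^{-2}\le\mu(1-\bar\phi)^{-2}$.

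Second, monotonicity of the sequence does not follow from comparison. Instead I pass to the limit by compactness. The right-hand sides are bounded in $L^\infty$, uniformly in $k$, by $\mu(1-\|\bar\phi\|_\infty)^{-2}$. Elliptic regularity then bounds $\phi_k$ in $W^{2,p}$, and Schauder's fixed point theorem, applied to the map $\phi_k\mapsto\phi_{k+1}$ on the convex set $\{0\le\phi\le\bar\phi\}$, gives a solution of (\ref{1.4}).

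To obtain a \emph{minimal} solution, I will take the infimum over the set of solutions. This set is compact in $C(\bar\Omega)$ by the same a priori bounds, and minimality is understood pointwise, for instance via the smallest value of $\int_\Omega\frac{1}{1-\phi}\,\mathrm{d}x$. Finally, $w_\lambda<1$ holds because $w_\lambda\le\bar\phi<1$.

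\emph{Nonexistence for large $\lambda$.} I will use Theorem \ref{tuilun} with $\lambda^*:=\frac{N(1+|\Omega|)^4}{2\beta|\Omega|^2}$ when $N\ge2$. For $N=1$ there is a simpler test-function argument. Multiplying (\ref{1.4}) by the first eigenfunction $\varphi_1$ (normalized so that $\int\varphi_1=1$) gives
$$\lambda_1\int\phi\varphi_1=\lambda\int\frac{\varphi_1}{(1-\phi)^2(1+\int\frac{1}{1-\phi})^2}.$$
The left side is less than $\lambda_1$. For the right side, I use that on a set of measure at least a fixed fraction we have $\frac{1}{1-\phi}$ comparable to its mean, and bound the right side below by $c\lambda$. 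Alternatively, Jensen's inequality gives $\int\frac{1}{1-\phi}\le\cdots$, which yields a uniform lower bound. In either case $\lambda\le C$, so $\lambda^*<\infty$.

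\emph{The case $N=1$.} The identity $\lambda_*=\lambda^*$ comes from the exact ODE analysis of \cite{ref1} cited in Section 1.2. On $[-1,1]$, the solution set consists of two branches for $\lambda<\lambda^*$, which meet at $\lambda^*$, so the minimal solution exists up to $\lambda^*$.
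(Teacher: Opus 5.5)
The paper gives no proof of this lemma; it is quoted from \cite{lia}, so your argument has to stand on its own.

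\textbf{What works.} The existence part is sound. Schauder's theorem for $T$ on $K=\{\phi\in C(\bar\Omega): 0\le\phi\le\bar\phi\}$ works as you describe. Your remark about passing to the limit along the iterates $\phi_k$ is superfluous, since a subsequential limit of iterates need not be a fixed point, but Schauder does not need it. Nonexistence for $N\ge2$ via Theorem~\ref{tuilun} is also fine, and $\lambda_*\le\lambda^*$ is then automatic.

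\textbf{The gap in minimality.} The nonlocal factor decreases in $\phi$: lowering $\phi$ raises $(1+\int_\Omega\frac{1}{1-\phi}\,dx)^{-2}$. Hence the pointwise minimum of two solutions is not a supersolution of (\ref{1.4}), and without a comparison principle an infimum of solutions is not a solution. There are two further problems with this step.
\begin{itemize}
\item You proved $\phi\le\bar\phi$ only for elements of $K$, not for arbitrary solutions of (\ref{1.4}). In 1D the larger of the two solutions has maximum tending to $1$ as $\lambda\to0$. So compactness of ``the set of solutions'' is unjustified.
\item Minimizing $\int_\Omega\frac{1}{1-\phi}\,dx$ over fixed points in $K$ is not pointwise minimality, and it says nothing about solutions outside $K$.
\end{itemize}

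\textbf{A route that works.} Reduce to (\ref{1.3}). A solution $\phi$ of (\ref{1.4}) solves (\ref{1.3}) with $\mu_\phi=\lambda(1+\int_\Omega\frac{1}{1-\phi}\,dx)^{-2}$, hence $\phi\ge w_{\mu_\phi}$, the minimal solution of (\ref{1.3}). Define $\Lambda(\mu)=\mu(1+\int_\Omega\frac{1}{1-w_\mu}\,dx)^2$.
\begin{itemize}
\item $\Lambda$ is continuous (the minimal branch is stable, so the implicit function theorem applies) and strictly increasing.
\item So for $\lambda<\sup_{\mu<\mu^*}\Lambda(\mu)$, a number strictly larger than $(1+|\Omega|)^2\mu^*$, there is a unique $\mu(\lambda)$ with $\Lambda(\mu(\lambda))=\lambda$. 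Then $w_\lambda:=w_{\mu(\lambda)}$ solves (\ref{1.4}).
\item For any solution $\phi$, the bound $\phi\ge w_{\mu_\phi}$ gives $\lambda=\mu_\phi(1+\int_\Omega\frac{1}{1-\phi}\,dx)^2\ge\Lambda(\mu_\phi)$, so $\mu_\phi\le\mu(\lambda)$.
\item Equivalently, $\int_\Omega\frac{1}{1-\phi}\,dx\ge\int_\Omega\frac{1}{1-w_\lambda}\,dx$, with equality only for $\phi=w_\lambda$.
\end{itemize}
This gives a precise sense in which $w_\lambda$ is minimal. Pointwise minimality over all solutions does not follow, and you should not claim it without further argument.

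\textbf{The case $N=1$: the test-function bound.} Your bound relies on $\frac{1}{1-\phi}$ being comparable to its mean on a set of fixed proportion. This is false along the upper branch. There $\frac{1}{1-\phi}\approx\frac{1}{|x|}$ outside a shrinking core, so the mean grows logarithmically while the set where $\frac{1}{1-\phi}$ exceeds a fixed fraction of it has measure tending to $0$. Cauchy--Schwarz or Jensen with weight $\varphi_1$ would need $\int\varphi_1^{-1}\,dx<\infty$, which fails.

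The missing ingredient is that 1D solutions are even and concave, so $g=\frac{1}{1-\phi}$ is nonincreasing in $|x|$. Then $\int_{-1}^1 g\,dx\le 2\int_{-1/2}^{1/2}g\,dx\le C(\int_{-1}^1\varphi_1 g^2\,dx)^{1/2}$ and $1\le\frac12\int_{-1}^1 g\,dx$. Together these give $\lambda\le C\lambda_1$.

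\textbf{The case $N=1$: the identity $\lambda_*=\lambda^*$.} This is the more serious problem. With your explicit constants, $\lambda_*=(1+|\Omega|)^2\mu^*=9\mu^*$ is strictly below the 1D threshold, which is at least $\sup_{\mu<\mu^*}\Lambda(\mu)$. So $\lambda_*=\lambda^*$ is false as you have set things up. You must redefine both constants as the turning point of the 1D branch. Existence of a minimal solution for all $\lambda$ below it then rests entirely on the ODE analysis of \cite{ref1}. Your Schauder construction only reaches $\lambda<(1+|\Omega|)^2\mu^*$.
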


\begin{proof}[Proof of Theorem \ref{global existence}]
For $w_\lambda<1$, we choose an appropriate $\delta\in\left(0, \frac{1}{2}\right)$ such that
\begin{equation}
w_{\lambda}(x) \le 1-2 \delta ,\quad u_{0}(x) \le 1-2 \delta  ,\text{ } \forall x \in \Omega.\label{033}
\end{equation}
Let $ w(t, x)=u(t, x)-w_{\lambda}(x)$, we have:
\begin{equation}
\left\{\begin{array}{lll}
 -\Delta w+w_{t}=f_{\delta}(u)-f_{\delta}
\left(w_{\lambda}\right),\quad & (t,x) \in(0, T)\times\Omega ,
\\  w(t, x)=0,\quad &(t,x) \in (0, T)\times \partial \Omega,
\\  w(0, x)=u_{0}(x)-w_{\lambda}(x),\quad & x \in\Omega,
\end{array}\quad \right. \label{034}
\end{equation}
Where $T$ is defined in proposition \ref{lem 01}.

We multiply the $\eqref{034}$ by $w_t$ and $w$ respectively, add them together, and integrate over $\Omega$:
\begin{equation}
\begin{aligned}
&\int_{\Omega}-w_t\Delta w+w_{t}^2-w\Delta w+ww_{t} d x
\\=&\frac{1}{2}\frac{d}{d t} \int_{\Omega}\left(|w|^{2}+|\nabla w|^{2}\right) d x+\int_{\Omega}\left(\left|w_{t}\right|^{2}+|\nabla w|^{2}\right) d x \\=&  \int_{\Omega}\left[F_{\delta}(u)-F_{\delta}\left(w_{\lambda}\right)\right] w_{t} d x+\int_{\Omega}\left[F_{\delta}(u)-F_{\delta}\left(w_{\lambda}\right)\right] w d x
\\ \le & C \lambda \delta^{-3} \int_{\Omega}|w|\left(|w|+\left|w_{t}\right|\right) d x+C \lambda \delta^{-4} \int_{\Omega}|w| d x \cdot \int_{\Omega}\left[\left|w_{t}\right|+|w|\right] d x \\ \le & C \lambda \delta^{-4} \int_{\Omega}\left(\left|w_{t}\right|^{2}+|\nabla w|^{2}\right) d x.
\end{aligned}\label{035}
\end{equation}

Next we estimate $w_t$.

Differentiating the equation (\ref{034}) with respect to $t$:
\begin{equation}
\left\{\begin{array}{lll}
 -\Delta w_t+w_{tt}=\partial_{t}\left(f_{\delta}(u)
-f_{\delta}\left(w_{\lambda}\right)\right),\quad & (t,x) \in(0, T)\times \Omega,
\\  w_{t}(t, x)=0 ,\quad& (t,x) \in(0, T)\times \partial \Omega,
\\  w_{t}(0, x)=\Delta\left(u_{0}(x)-w_{\lambda}(x)\right)
+f_{\delta}(u_0(x))-f_{\delta}\left(w_{\lambda}(x)
\right) ,\quad & \text { in } \Omega,
\end{array}\quad \right. \label{036}
\end{equation}
where
\begin{equation*}
\begin{aligned}
&\quad\ \ \partial_{t}\left(f_{\delta}(u)-f_{\delta}\left(w_{\lambda}\right)\right)\\
&=\frac{-2 \lambda\left(g_{\delta}(u)+g_{\delta}\left(w_{\lambda}\right)\right)}{\left(1+\int_{\Omega} g_{\delta}(u) d x\right)^{3}} \cdot\left(g_{\delta}(u)-g_{\delta}\left(w_{\lambda}\right)\right) \int_{\Omega} g_{\delta}^{\prime}(u) w_{t} d x
\\  &+\frac{2 \lambda g_{\delta}(u) g_{\delta}^{\prime}(u)}{\left(1+\int_{\Omega} g_{\delta}(u) d x\right)^{2}} \cdot v-\frac{2 \lambda\left(g_{\delta}\left(w_{\lambda}\right)\right)^{2}}{\left(1+\int_{\Omega} g_{\delta}\left(w_{\lambda}\right) d x\right)^{2}\left(1+\int_{\Omega} g_{\delta}(u) d x\right)} \cdot \int_{\Omega} g_{\delta}^{\prime}(u) w_{t} d x
\\  &+\frac{2 \lambda\left(g_{\delta}\left(w_{\lambda}\right)\right)^{2}\left[2+\int_{\Omega}\left(g_{\delta}(u)+g_{\delta}\left(w_{\lambda}\right)\right) d x\right]}{\left(1+\int_{\Omega} g_{\delta}\left(w_{\lambda}\right) d x\right)^{2}\left(1+\int_{\Omega} g_{\delta}(u) d x\right)^{3}} \cdot \int_{\Omega}\left(g_{\delta}(u)-g_{\delta}\left(w_{\lambda}\right)\right) d x \int_{\Omega} g_{\delta}^{\prime}(u) w_{t} d x,
\end{aligned}
\end{equation*}
thus
\begin{equation}
\left|\partial_{t}\left(f_{\delta}(u)-f_{\delta}\left(w_{\lambda}\right)\right)\right| \le C \lambda \delta^{-6}\left(\|w_{t}\|_{L^{2}(\Omega)} \cdot|w|+|w_{t}|+\|w_{t}\|_{L^{2}(\Omega)}+\|w_{t}\|_{L^{2}(\Omega)}
\|w\|_{L^{2}(\Omega)}\right).	\label{037}
\end{equation}
Multiplying (\ref{036}) by $w_t$ and integrate over $\Omega$, we have
\begin{equation}
\begin{aligned}
&\int_{\Omega}-w_{t}\Delta w_{t}+w_{t}w_{tt} d x
=\frac{1}{2}\frac{d}{d t} \int_{\Omega}|w_{t}|^{2} d x+\int_{\Omega}|\nabla w_{t}|^{2} d x \\=&  \int_{\Omega}\partial_{t}\left(f_{\delta}(u)-f_{\delta}\left(w_{\lambda} \right)\right) w_{t} d x  \le  C \lambda \delta^{-6} \int_{\Omega}\left|w_{t}\right|^{2} d x.
\end{aligned}\label{038}
\end{equation}
Adding \eqref{035} and \eqref{038} yields:
\begin{equation}
\begin{aligned}
&\frac{1}{2}\frac{d}{d t} \int_{\Omega}\left(\left|w\right|^{2}+|\nabla w|^{2}+| w_t|^{2}\right) d x+\int_{\Omega}\left(\left|w_{t}\right|^{2}+|\nabla w|^{2}+|\nabla w_t|^{2}\right) d x
\\ \le &C \lambda \delta^{-6} \int_{\Omega}\left(\left|w_{t}\right|^{2}+|\nabla w|^{2}+| w_t|^{2}\right) d x.
\end{aligned} \label{039}
\end{equation}
Now taking $\lambda \le \frac{\delta^{6}}{3 C}$, we have:
\begin{equation*}
C \lambda \delta^{-6} \int_{\Omega}\left(\left|w_{t}\right|^{2}+|\nabla w|^{2}+| w_t|^{2}\right) d x \le  \frac{1}{3}  \int_{\Omega}\left(\left|w_{t}\right|^{2}+|\nabla w|^{2}+| w_t|^{2}\right) d x,
\end{equation*}
by \eqref{039},
\begin{equation}
\begin{aligned}
&\frac{1}{2}\frac{d}{d t} \int_{\Omega}\left(\left|w\right|^{2}+|\nabla w|^{2}+| w_t|^{2}\right) d x+\frac{1}{3}\int_{\Omega}\left(\left|w_{t}\right|^{2}+|\nabla w|^{2}\right) d x
\\ \le &- \int_{\Omega}|\nabla w_{t}|^{2} d x \le 0.
\end{aligned} \label{040}
\end{equation}
by Theorem (Poincaré Inequality):
\begin{equation*}
\begin{aligned}
&\frac{d}{d t} \int_{\Omega}\left(\left|w\right|^{2}+|\nabla w|^{2}+| w_t|^{2}\right) d x+C_2\int_{\Omega}\left(\left|w\right|^{2}+|\nabla w|^{2}+| w_t|^{2}\right) d x \le 0,
\end{aligned}
\end{equation*}
then
\begin{equation*}
\frac{d}{d t}\left( e^{C_2t}\int_{\Omega}\left(\left|w\right|^{2}+|\nabla w|^{2}+| w_t|^{2}\right)  d x\right) \le 0,
\end{equation*}
thus
\begin{equation}
\begin{aligned}
&\int_{\Omega}\left(\left|w\right|^{2}+|\nabla w|^{2}+| w_t|^{2}\right)  d x
\\ \le & e^{-C_2t}\int_{\Omega}\left(\left|w(0, x)\right|^{2}+|\nabla w(0, x)|^{2}+| w_t(0, x)|^{2}\right)  d x
\\=&\left\|u_{0}-w_{\lambda}\right\|_{H^{1}}^2e^{-C_2t}+\left\|w_t(0, x)\right\|_{L^{2}}^2e^{-C_2t}.
\end{aligned} \label{041}
\end{equation}
Note that:
\begin{equation*}
\left\|w_t(0, x)\right\|_{L^{2}(\Omega)}^2=\left\|\Delta\left(u_{0}(x)
-w_{\lambda}(x)\right)+f_{\delta}(u_0(x))-f_{\delta}
\left(w_{\lambda}(x)\right)\right\|_{L^{2}(\Omega)}^2\le C_3\left\|u_{0}-w_{\lambda}\right\|_{H^{2}(\Omega)}^2,
\end{equation*}
By $\Delta w=w_t-\left(f_{\delta}(u)-f_{\delta}
\left(w_{\lambda}\right)\right)$ and \eqref{041}, we have
\begin{equation}
\left\|w\right\|_{H^{2}(\Omega)}^2
\le C_2\left\|u_{0}-w_{\lambda}\right\|_{H^{2}(\Omega)}^2 e^{-C_1t},
\label{042}
\end{equation}
then
\begin{equation}
\left\|\left\|w\right\|_{H^{2}(\Omega)}^2\right\|_{L^{\infty}([0, T])}
\le C_2\left\|u_{0}-w_{\lambda}\right\|_{H^{2}(\Omega)}^2 . \label{043}
\end{equation}
By Theorem Sobelve,
\begin{equation}
\qquad\left\|u-w_{\lambda}\right\|_{L^{\infty}([0, T] \times \Omega)} \le C\left\|u_{0}-w_{\lambda}\right\|_{H^{2}(\Omega)}.\label{044}
\end{equation}
Taking $\theta=\min \left\{\frac{\delta}{C}, 1\right\}$ and $ \lambda_{0}= \frac{\delta^{6}}{3 C} $, we have $u(t, x) \le 1-\delta ,\quad \forall (t, x) \in[0, T] \times \Omega$.

This means that $\|u\|_{L^{\infty}([0, T] \times \Omega)} \le 1-\delta$. At this time, $f_\delta(u)\equiv f(u)$, then $u$ is the solution of equation (\ref{1.1}) on $[0, T]$.

According to Theorem \ref{local existence}, we extend the solution $u$ from $T$  to the maximum existence interval $\widetilde{T}$, and $\widetilde{T}$ depends on $\lambda$, $\Omega$ and $\delta $.

Let $v(t, x)=u(t+T, x)-w_\lambda(x)$, consider the equation:
\begin{equation}
\left\{\begin{array}{lll}
 -\Delta v+v_{t}=f_{\delta}(v)-f_{\delta}\left(w_{\lambda}\right) ,\quad& (t,x) \in (0,  \widetilde{T}-T)\times\Omega,
\\  v(t, x)=0 ,\quad& (t,x) \in (0, \widetilde{T}-T)\times\partial \Omega ,
\\  v(0, x)=u(T, x)-w_{\lambda}(x) ,\quad& x\in \Omega.
\end{array}\quad \right. \label{045}
\end{equation}
Repeating the above process, we can get:
\begin{equation}
\qquad\left\|u(t+T, x)-w_\lambda(x)\right\|_{L^{\infty}([0, \widetilde{T}-T] \times \Omega)} \le C\left\|u(T, x)-w_{\lambda}\right\|_{H^{2}(\Omega)}
\le CC_2\left\|u_{0}-w_{\lambda}\right\|_{H^{2}(\Omega)}^2.\label{046}
\end{equation}
Taking $\theta=\min \left\{\frac{\delta}{C}, 1, \frac{\delta}{CC_2}, \frac{1}{C_2}\right\}$ and $\lambda_{0}=\frac{\delta^6}{3C}$, we have
$$\|u(t+T, x)\|_{L^{\infty}([0, \widetilde{T}-T] \times \Omega)} \le 1-\delta,$$
then
$$\|u\|_{L^{\infty}([0, \widetilde{T}] \times \Omega)} \le 1-\delta,$$
therefore, $\widetilde{T}=\infty$. Theorem \ref{global existence} is proved.
\end{proof}

\section{Asymptotic behavior of solutions}

\qquad In this section, we will use gradient system theory, analyticity theory and Lojasiewicz-Simon method to study the convergence and convergence rate of the global solution to the steady-state solution.

\subsection{Gradient system}

 \begin{definition}[See Definition 6.3.1 in \cite{ref8}]
    Let $H$ be a complete metric space, $S(t)$ be a $C_0$-semigroup on $H$, and $V:H\to \mathbb{R}$ be a continuous functional. If the following conditions hold:\\
    (1) For every $x\in H$, $V(S(t)x)$ is monotonically decreasing in $t$;\\
    (2) $V(x)\ge C$ for all $x\in H$, where $C$ is a constant.\\
    Then $V$ is called a Lyapunov functional.
\end{definition}

\begin{definition}[See Definition 6.3.2 in \cite{ref8}]
    Let $H$ be a complete metric space, $S(t)$ be a $C_0$-semigroup on $H$, and $V$ be a Lyapunov functional. If the following conditions hold:\\
    (1) For every $x\in H$, there exists $t_0>0$ such that $\bigcup_{t\ge t_0}S(t)x$ is relatively compact in $H$;\\
    (2) If $V(S(t)x)=V(x)$ for all $t>0$, then $x$ is a fixed point of $S(t)$.\\
    Then $(H,S(t),V)$ is called a gradient system.
\end{definition}

To construct a gradient system, we will find a Lyapunov functional and a $C_0$-semigroup for the equation and prove the compactness of the trajectories.

Multiply both sides of equation (\ref{1.1}) by $u_t$ and integrate over $\Omega$:
$$\int_{\Omega}u_{t}^{2}\,\mathrm{d}x-\int_{\Omega}u_t\Delta u\,\mathrm{d}x = \int_{\Omega}\frac{\lambda u_t}{(1-u)^2(1+\int_{\Omega}\frac{1}{1-u}\,\mathrm{d}x)^2}\,\mathrm{d}x.$$
By Green's formula,
$$\int_{\Omega}u_t\Delta u\,\mathrm{d}x = -\int_{\Omega}\nabla u_t\nabla u\,\mathrm{d}x +\int_{\partial \Omega}u_t\frac{\partial u}{\partial n}\,\mathrm{d}S.$$
Combining with the Dirichlet boundary conditions, we have $\int_{\Omega}u_t\Delta u\,\mathrm{d}x = -\int_{\Omega}\nabla u_t\nabla u\,\mathrm{d}x$. Therefore,
$$\int_{\Omega}u_{t}^2\,\mathrm{d}x+\frac{\mathrm{d}}{\mathrm{d}t}\left(\frac{1}{2}\int_{\Omega}|\nabla u|^2\,\mathrm{d}x\right)=\frac{\mathrm{d}}{\mathrm{d}t}\left(-\lambda\left(1+\int_{\Omega}\frac{1}{1-u}\,\mathrm{d}x\right)^{-1}\right),$$
which implies
$$\frac{\mathrm{d}}{\mathrm{d}t}\left(\frac{1}{2}\int_{\Omega}|\nabla u|^2\,\mathrm{d}x+\frac{\lambda}{1+\int_{\Omega}\frac{1}{1-u}\,\mathrm{d}x}\right)=-\int_{\Omega}u_{t}^2\,\mathrm{d}x.$$
We define the energy functional for equation (\ref{1.1}) as
\begin{equation}
  E(t)(u)=\frac{1}{2}\int_{\Omega}|\nabla u|^2\,\mathrm{d}x+\frac{\lambda}{1+\int_{\Omega}\frac{1}{1-u}\,\mathrm{d}x}, \label{2.1}
\end{equation}
and
\begin{equation}
  \frac{\,\mathrm{d}E}{\,\mathrm{d}t}=-\int_{\Omega}u_{t}^2\,\mathrm{d}x\le 0.\label{2.2}
\end{equation}
Note that $E:H^2\cap H_0^1(\Omega)\to\mathbb{R}$ is continuous and
$$E(t)(u)\ge 0,\quad \forall u\in H^2\cap H_0^1(\Omega).$$
Therefore, $E:H^2\cap H_0^1(\Omega)\to\mathbb{R}$ is a Lyapunov functional.

For every $\lambda>0$ and $u_0\in H^2\cap H_0^1(\Omega)$, by assumption (\ref{u}), the global solution satisfies $u\in C([0,+\infty),H^2\cap H_0^1(\Omega))\cap C^1([0,+\infty),L^2(\Omega))$. Thus, we can define a nonlinear $C_0$-semigroup on $H^2\cap H_0^1(\Omega)$ as:
$$S(t):u_0\mapsto u(t)=S(t)u_0.$$

\begin{lemma}[See Lemma 6.2.1 in \cite{ref8}]\label{lemma2.1}
    Let $y(t)$ and $h(t)$ be nonnegative continuous functions defined on $[0,T]$ ($0<T\le +\infty$). If the following hold:
    $$\frac{\,\mathrm{d} y}{\,\mathrm{d} t}\le A_1y^2+A_2+h(t),$$
    $$\int_{0}^{T}y(t)\,\mathrm{d}t\le A_3,\quad \int_{0}^{T}h(t)\,\mathrm{d}t\le A_4,$$
    where $A_i\ge 0$ ($i=1,\dots,4$) are constants. Then for any $r\in(0,T)$:
    $$y(t+r)\le \left(\frac{A_3}{r}+A_2r+A_4\right)e^{A_1A_3},\quad t\in [0,T-r).$$
    In particular, if $T=+\infty$:
    $$\lim_{t \to \infty}y(t)=0.$$
\end{lemma}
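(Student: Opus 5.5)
We follow the classical uniform Gronwall argument. The inequality $y'\le A_1y^2+A_2+h$ is treated as a \emph{linear} differential inequality $y'\le (A_1y)\,y+(A_2+h)$, where the coefficient $A_1y$ has controlled time integral thanks to $\int_0^T y\,\mathrm{d}t\le A_3$.

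\textbf{Step 1 (pointwise comparison).} Fix $t\in[0,T-r)$ and $s\in[t,t+r]$. For $\tau\in[s,t+r]$ set
$$z(\tau)=y(\tau)\exp\Bigl(-A_1\int_s^\tau y(\sigma)\,\mathrm{d}\sigma\Bigr).$$
Then
$$z'(\tau)=\bigl(y'-A_1y^2\bigr)\exp\Bigl(-A_1\int_s^\tau y\Bigr)\le A_2+h(\tau),$$
because the exponential factor lies in $(0,1]$ and $A_2+h\ge0$. Integrating from $s$ to $t+r$ gives
$$y(t+r)\exp\Bigl(-A_1\int_s^{t+r}y\Bigr)\le y(s)+A_2r+\int_t^{t+r}h\le y(s)+A_2r+A_4.$$
Since $\int_s^{t+r}y\le A_3$, we obtain $y(t+r)\le \bigl(y(s)+A_2r+A_4\bigr)e^{A_1A_3}$.

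\textbf{Step 2 (averaging).} Integrate the inequality of Step 1 in $s$ over $[t,t+r]$ and divide by $r$. Using $\int_t^{t+r}y(s)\,\mathrm{d}s\le A_3$, this yields
$$y(t+r)\le\Bigl(\frac{A_3}{r}+A_2r+A_4\Bigr)e^{A_1A_3},$$
which is the stated bound.

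\textbf{Step 3 (the case $T=+\infty$).} We apply Steps 1--2 on the tail interval $[t_0,\infty)$ rather than on $[0,\infty)$. Replace $A_3$ and $A_4$ by
$$a_3(t_0)=\int_{t_0}^\infty y\,\mathrm{d}t,\qquad a_4(t_0)=\int_{t_0}^\infty h\,\mathrm{d}t.$$
Both tend to $0$ as $t_0\to\infty$, since $y,h\ge0$ are integrable on $[0,\infty)$. Fix $r>0$. For all $t\ge t_0$ we get
$$y(t+r)\le\Bigl(\frac{a_3(t_0)}{r}+A_2r+a_4(t_0)\Bigr)e^{A_1A_3}.$$
Letting $t\to\infty$ and then $t_0\to\infty$ gives $\limsup_{t\to\infty}y(t)\le A_2r\,e^{A_1A_3}$. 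Since $r>0$ is arbitrary, letting $r\to0^+$ gives $\lim_{t\to\infty}y(t)=0$. This works for every $A_2\ge0$, so no extra hypothesis is needed.

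The main point of care is this last step. The bound must be applied on tails with shrinking integrals, and $r$ must be sent to $0$ only after $t_0\to\infty$. Step 1 is routine, but it uses that $y$ is differentiable, which is implicit in the hypothesis; if $y'$ is understood a.e., the argument is the same with absolute continuity.
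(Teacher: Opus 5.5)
Your proof is correct. Steps 1--2 are the uniform Gronwall argument: the integrating factor $\exp(-A_1\int_s^\tau y)$ followed by averaging over $s\in[t,t+r]$. Step 3 correctly reapplies the bound on tails $[t_0,\infty)$, sending $t_0\to\infty$ before $r\to0^+$. The paper does not prove this lemma itself; it only cites Lemma 6.2.1 of \cite{ref8}, and your argument is the standard proof of that result.
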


\begin{proposition}
  $(H^2\cap H_0^1(\Omega),S(t),E)$ forms a gradient system.
\end{proposition}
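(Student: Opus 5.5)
We must verify the two conditions of the gradient-system definition (Definition 6.3.2 of \cite{ref8}). The Lyapunov property of $E$ has already been checked via (\ref{2.2}) and $E\ge 0$. So the work splits into (1) relative compactness of the orbit $\bigcup_{t\ge t_0}S(t)u_0$ in $H^2\cap H_0^1(\Omega)$, and (2) the fact that a point with constant energy along its orbit is a fixed point.

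Condition (2) is the easy part and comes straight from (\ref{2.2}). Suppose $E(S(t)u_0)=E(u_0)$ for all $t>0$. Integrating (\ref{2.2}) gives $\int_0^t\|u_t\|_{L^2}^2\,\mathrm{d}s=0$, so $u_t\equiv 0$ and $u(t)=u_0$ for all $t$. Hence $u_0$ is a fixed point of $S(t)$, and in fact a steady state of (\ref{1.5}).

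For condition (1), the plan is to bound the orbit in a space compactly embedded in $H^2$, or to get a uniform $H^2$ bound together with decay of $u_t$.

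Step (a): Assumption (\ref{u}) gives $\|u\|_{L^\infty}\le 1-\delta$. Hence the nonlocal factor $\bigl(1+\int_\Omega\frac{1}{1-u}\bigr)^{-2}$ lies in $[(1+|\Omega|/\delta)^{-2},1]$, and the right-hand side $f(u)$ is bounded in $L^\infty$ uniformly in $t$, by $\lambda\delta^{-2}$. Since $E\ge 0$ is nonincreasing, $\|\nabla u\|_{L^2}$ is bounded uniformly, and $\int_0^\infty\|u_t\|_{L^2}^2\,\mathrm{d}t\le E(u_0)$.

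Step (b): Differentiate the equation in $t$ and test with $u_t$. As in (\ref{037})–(\ref{038}), with $\delta$ as in (\ref{u}), the derivative satisfies $|\partial_t f(u)|\le C(|u_t|+\|u_t\|_{L^2})$. This yields
$$\tfrac{\mathrm{d}}{\mathrm{d}t}\|u_t\|_{L^2}^2\le C\|u_t\|_{L^2}^2 .$$
Applying Lemma \ref{lemma2.1} with $y=\|u_t\|_{L^2}^2$ gives $\|u_t(t)\|_{L^2}$ bounded for $t\ge r$ and $\|u_t(t)\|_{L^2}\to0$. Then elliptic regularity (Remark \ref{delta}) applied to $-\Delta u=f(u)-u_t$ gives a uniform bound on $\|u(t)\|_{H^2}$ for $t\ge t_0$.

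Step (c): To upgrade the bound to compactness in $H^2$, take any sequence $t_n\to\infty$. The sequence $u(t_n)$ is bounded in $H^2\subset C(\bar\Omega)$ for $N\le3$, and this embedding is compact, so a subsequence converges uniformly. Then $f(u(t_n))$ converges in $L^2$: the map $u\mapsto f(u)$ is Lipschitz from $L^\infty$ (or $L^2$) to $L^2$ on $\{\|u\|_\infty\le1-\delta\}$, as in (\ref{015}). Also $u_t(t_n)\to0$ in $L^2$. So $-\Delta u(t_n)$ converges in $L^2$, and by Remark \ref{delta} $u(t_n)$ converges in $H^2$.

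The step I expect to be the main obstacle is (b). It needs enough time regularity to differentiate in $t$; if necessary I would work with difference quotients, or use smoothing for $t\ge t_0>0$ via Lemma \ref{classic exists 2}. It also needs an integrability estimate for $\|u_t\|^2$ that feeds Lemma \ref{lemma2.1}. The nonlocal term contributes only $\|u_t\|_{L^2}$-type quantities, which are harmless once $\|u\|_\infty\le 1-\delta$.
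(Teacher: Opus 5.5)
Your argument is correct, but for condition (1) it reaches compactness by a different route from the paper. The fixed-point part and your steps (a)--(b) match the paper: it too differentiates in $t$, tests with $u_t$, and feeds $\int_0^\infty\|u_t\|_{L^2}^2\,\mathrm{d}t\le E(u_0)$ into Lemma~\ref{lemma2.1}.

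After that, the paper uses only the resulting bound on $\|u_t\|_{L^2}$ and runs a second energy estimate. It tests the differentiated equation with $-\Delta u_t$ and multiplies by $t$, which gives a uniform bound on $\|\nabla u_t\|_{L^2}$ for $t\ge\delta$. It then applies $H^3$ elliptic regularity to $-\Delta u=f(u)-u_t\in H^1$, so the orbit is bounded in $H^3$ and hence precompact in $H^2$.

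You instead use the decay $\|u_t(t)\|_{L^2}\to0$, the uniform $H^2$ bound, and the compact embedding $H^2\hookrightarrow C(\bar\Omega)$. Combined with the Lipschitz bound (\ref{015}) and the isomorphism $-\Delta:H^2\cap H_0^1\to L^2$ of Remark~\ref{delta}, this upgrades subsequential convergence to $H^2$.

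\textbf{What each route buys.} The paper's route gives a genuine smoothing statement, namely the $H^3$ bound. It needs regularity such as $u_t\in L^2_{\mathrm{loc}}(H^2)$ and $u_{tt}\in L^2_{\mathrm{loc}}(L^2)$. That regularity is not contained in the class $C([0,\infty);H^2\cap H_0^1)\cap C^1([0,\infty);L^2)$ and is not justified in the paper. Your route needs only the first-order estimate.

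\textbf{Making step (b) rigorous.} Of your two suggested remedies, the difference-quotient one works directly in the given class; the appeal to Lemma~\ref{classic exists 2} alone would not supply the extra regularity. Set $w_h=(u(\cdot+h)-u)/h$, and let $L$ be the $L^2$-Lipschitz constant of $f$ on $\{\|u\|_{L^\infty}\le1-\delta\}$ coming from (\ref{015}). Then $\frac{\mathrm{d}}{\mathrm{d}t}\|w_h\|_{L^2}^2\le 2L\|w_h\|_{L^2}^2$. Apply Gronwall, let $h\to0$, and average over $s\in[t-1,t]$. This gives $\|u_t(t)\|_{L^2}^2\le e^{2L}\int_{t-1}^{t}\|u_t\|_{L^2}^2\,\mathrm{d}s\to0$, so you do not even need Lemma~\ref{lemma2.1}.

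\textbf{One small addition.} A sequence in $\bigcup_{t\ge t_0}S(t)u_0$ may have bounded times, not only times tending to infinity. That case follows at once from the continuity of $t\mapsto u(t)$ in $H^2$ on compact intervals, but you should state it.
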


\begin{proof}
Integrate both sides of (\ref{2.2}) with respect to $t$:
\begin{equation}
    E(u(t))+\int_{0}^{t}\|u_t\|_{L^2}^{2}\,\mathrm{d}\tau=E(u_0).\label{2.3}
\end{equation}
If there exists $t_0>0$ such that $E(u(t_0))=E(S(t_0)u_0)=E(u_0)$, then $\int_{0}^{t_0}\|u_t\|_{L^2}^{2}\,\mathrm{d}\tau=0$, which implies $u_t=0$ for all $t\in [0,t_0]$. Hence, $u_0$ is a fixed point of $S(t)$.

Next, we prove there exists $t_0>0$ such that $\bigcup_{t\ge 0}S(t)u_0$ is relatively compact in $H^2\cap H_0^1(\Omega)$.

Differentiate both sides of (\ref{1.1}) with respect to $t$:
\begin{equation}
  u_{tt}-\Delta u_t=\frac{2\lambda u_t}{(1-u)^3(1+\int_{\Omega}\frac{1}{1-u}\,\mathrm{d}x)^2}-\frac{2\lambda\int_{\Omega}\frac{u_t}{(1-u)^2}\,\mathrm{d}x}{(1-u)^2(1+\int_{\Omega}\frac{1}{1-u}\,\mathrm{d}x)^3}.\label{2.4}
\end{equation}
Multiply both sides of (\ref{2.4}) by $u_t$ and integrate over $\Omega$:
\begin{equation}
  \begin{aligned}
    &\quad\frac{1}{2}\frac{\mathrm{d}}{\mathrm{d}t}\|u_t\|_{L^2}^2+\|\nabla u_t\|_{L^2}^2\\
    &= 2\int_{\Omega}\frac{\lambda u_{t}^{2}}{(1-u)^3(1+\int_{\Omega}\frac{1}{1-u}\,\mathrm{d}x)^2}\,\mathrm{d}x-2\int_{\Omega}\frac{\lambda u_t\int_{\Omega}\frac{u_t}{(1-u)^2}\,\mathrm{d}x}{(1-u)^2(1+\int_{\Omega}\frac{1}{1-u}\,\mathrm{d}x)^3}\,\mathrm{d}x\\
    &= \frac{2\lambda}{(1+\int_{\Omega}\frac{1}{1-u}\,\mathrm{d}x)^2}\int_{\Omega}\frac{u_{t}^{2}}{(1-u)^3}\,\mathrm{d}x-\frac{2\lambda(\int_{\Omega}\frac{u_t}{(1-u)^2}\,\mathrm{d}x)^2}{(1+\int_{\Omega}\frac{1}{1-u}\,\mathrm{d}x)^3}\\
    &\le \frac{2\lambda}{(1+\int_{\Omega}\frac{1}{1-u}\,\mathrm{d}x)^2}\int_{\Omega}\frac{u_{t}^{2}}{(1-u)^3}\,\mathrm{d}x\\
    &\le C_1\|u_t\|_{L^2}^{2},
  \end{aligned}\label{2.5}
\end{equation}
where $C_1=C_1(\lambda,u_0,\Omega)>0$. The last inequality holds because $u$ satisfies assumption (\ref{u}), so $\|\frac{1}{1-u}\|_{L^\infty}\le M$ for all $t\ge 0$.

Integrate (\ref{2.5}) with respect to $t$:
$$\frac{1}{2}(\|u_t(t)\|_{L^2}^2-\|u_t(0)\|_{L^2}^2)+\int_{0}^{t}\|\nabla u_t\|_{L^2}^2\,\mathrm{d}\tau\le C_1\int_{0}^{t}\|u_t\|_{L^2}^2\,\mathrm{d}\tau.$$
From (\ref{2.3}),
\begin{equation}
    \int_{0}^{t}\|u_t\|_{L^2}^{2}\,\mathrm{d}\tau=E(u_0)-E(u(t))\le E(u_0)\le C_2,\quad \forall t\ge 0.\label{2.6}
\end{equation}
Thus,
\begin{equation}
    \frac{1}{2}(\|u_t(t)\|_{L^2}^2-\|u_t(0)\|_{L^2}^2)+\int_{0}^{t}\|\nabla u_t\|_{L^2}^2\,\mathrm{d}\tau\le C_1C_2,\quad \forall t\ge 0.\label{2.7}
\end{equation}
Apply Young's Inequality to the right-hand side of (\ref{2.5}):
$$\frac{1}{2}\frac{\mathrm{d}}{\mathrm{d}t}\|u_t\|_{L^2}^2+\|\nabla u_t\|_{L^2}^2\le C_1\|u_t\|_{L^2}^{2}\le C_1\left(\frac{1}{2}\|u_t\|_{L^2}^{4}+\frac{1}{2}\right),$$
$$\frac{\mathrm{d}}{\mathrm{d}t}\|u_t\|_{L^2}^2\le C_1\|u_t\|_{L^2}^{4}+C_1.$$
By Lemma \ref{lemma2.1},
$$\lim_{t \to \infty}\|u_t\|_{L^2}^2=0,$$
so
$$\|u_t\|_{L^2}\le C_3,\quad \forall t\ge 0.$$
Substitute into (\ref{2.7}):
\begin{equation}
    \int_{0}^{t}\|\nabla u_t\|_{L^2}^2\,\mathrm{d}\tau\le C,\quad \forall t\ge 0,\label{2.8}
\end{equation}
where $C>0$ depends on $\lambda,u_0,\Omega$.

Multiply both sides of (\ref{2.4}) by $-\Delta u_t$ and integrate over $\Omega$:
\begin{align*}
    &\quad\frac{1}{2}\frac{\mathrm{d}}{\mathrm{d}t}\|\nabla u_t\|_{L^2}^2+\|\Delta u_t\|_{L^2}^2\\
    &= \frac{2\lambda}{(1+\int_{\Omega}\frac{1}{1-u}\,\mathrm{d}x)^2}\int_{\Omega}\frac{u_{t}}{(1-u)^3}(-\Delta u_t)\,\mathrm{d}x-\frac{2\lambda\int_{\Omega}\frac{u_t}{(1-u)^2}\,\mathrm{d}x}{(1+\int_{\Omega}\frac{1}{1-u}\,\mathrm{d}x)^3}\int_{\Omega}\frac{-\Delta u_t}{(1-u)^2}\,\mathrm{d}x\\
    &\le C_4\|u_t\|_{L^2}\|\Delta u_t\|_{L^2}+C_5\|u_t\|_{L^2}\|\Delta u_t\|_{L^2}\\
    &\le C\|u_t\|_{L^2}\|\Delta u_t\|_{L^2}\\
    &\le \frac{1}{2}\|\Delta u_t\|_{L^2}+\frac{C^2}{2}\|u_t\|_{L^2}^2.
\end{align*}
Hence,
\begin{equation}
    \frac{\mathrm{d}}{\mathrm{d}t}\|\nabla u_t\|_{L^2}^2+\|\Delta u_t\|_{L^2}^2\le C^2\|u_t\|_{L^2}^2.\label{2.9}
\end{equation}
Multiply both sides of (\ref{2.9}) by $t$ and integrate with respect to $t$:
$$t\|\nabla u_t\|_{L^2}^2-\int_{0}^{t}\|\nabla u_t\|_{L^2}^2\,\mathrm{d}\tau+\int_{0}^{t}\tau\|\Delta u_t\|_{L^2}^2\,\mathrm{d}\tau\le C^2\int_{0}^{t}\tau\|u_t\|_{L^2}^2\,\mathrm{d}\tau,\quad \forall t>0.$$
Using (\ref{2.6}) and (\ref{2.8}),
$$t\|\nabla u_t\|_{L^2}^2+\int_{0}^{t}\tau\|\Delta u_t\|_{L^2}^2\,\mathrm{d}\tau\le C+C^2t\int_{0}^{t}\|u_t\|_{L^2}^2\,\mathrm{d}\tau\le C+C^3t.$$
Thus, for $t\ge\delta>0$:
$$\|\nabla u_t\|_{L^2}^2\le \frac{C}{t}+C^3\le C_{\delta}=\frac{C}{\delta}+C^3,$$
and consequently,
$$\|u\|_{H^3}\le C\left(\|u_t\|_{H^1}+\left\|\frac{\lambda}{(1-u)^2(1+\alpha\int_{\Omega}\frac{1}{1-u}\,\mathrm{d}x)^2}\right\|_{H^1}\right)\le {C_{\delta}}',\quad \forall t\ge\delta>0.$$
This shows that for every $u_0\in H^2\cap H_0^1(\Omega)$, there exists $\delta>0$ such that $\bigcup_{t\ge \delta}S(t)u_0$ is relatively compact in $H^2\cap H_0^1$.

Therefore, $(H^2\cap H_0^1(\Omega),S(t),E)$ forms a gradient system.
\end{proof}

\subsection{Analyticity}

\qquad Analyticity is one of the conditions for the Lojasiewicz-Simon Inequality to hold. This section will prove that the energy functional $E$ is analytic near equilibrium points.

\begin{proposition}
    Let $\psi$ be an equilibrium point satisfying (\ref{1.5}), and $V$ be an open neighborhood of $\psi$ in $H^2\cap H_0^1(\Omega)$. Then the energy functional $E$ is analytic on $V$.
\end{proposition}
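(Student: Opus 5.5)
We split $E$ as $E=E_1+E_2$ with
$$E_1(u)=\frac12\int_\Omega|\nabla u|^2\,\mathrm{d}x,\qquad E_2(u)=\lambda\Bigl(1+\int_\Omega\frac{1}{1-u}\,\mathrm{d}x\Bigr)^{-1}$$
and prove analyticity of each piece on a suitable neighborhood $V$ of $\psi$. The neighborhood is chosen first. Since $\psi$ is a classical solution of (\ref{1.5}) with $\psi<1$ on $\bar\Omega$ (it is continuous and vanishes on $\partial\Omega$), there is $\delta_0>0$ with $\sup_\Omega\psi\le 1-2\delta_0$. By the embedding $H^2(\Omega)\hookrightarrow L^\infty(\Omega)$ for $1\le N\le 3$, with constant $C_S$, we take
$$V=\{u\in H^2\cap H_0^1(\Omega): \|u-\psi\|_{H^2}<\delta_0/C_S\}.$$
Every $u\in V$ then satisfies $u\le 1-\delta_0$ a.e., so $u(\Omega)\subset\subset D:=(-\infty,1-\delta_0/2)$. (If the statement is meant for an arbitrary open $V$, we shrink it to this one.)

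The term $E_1$ is a continuous quadratic form on $H^2\cap H_0^1$, since $|\nabla u|^2$ is controlled by $\|u\|_{H^2}^2$. It is therefore analytic: its expansion in the sense of the definition from \cite{ref13} terminates after the bilinear term, with $M_1(h)=\int\nabla u\cdot\nabla h$ and $M_2(h,h)=\frac12\int|\nabla h|^2$.

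For $E_2$ we use a composition. Let $J(u)=\int_\Omega f(x,u(x))\,\mathrm{d}x$ with $f(x,y)=\frac{1}{1-y}$, so $E_2=\Phi\circ J$ where $\Phi(s)=\lambda(1+s)^{-1}$. We carry out three steps.

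\textbf{Step 1.} The function $f$ does not depend on $x$ and is real analytic on $D$. On any $D'\subset\subset D$ its Taylor series $\sum_n (1-y)^{-n-1}v^n$ converges for $|v|<\delta_0/4$, uniformly in $x$. Hence $f$ is uniformly analytic in $y\in D$ with respect to $x$, and Theorem \ref{lemma2.3} gives that $J$ is analytic on $U=\{u\in L^\infty:u(\Omega)\subset\subset D\}$.

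\textbf{Step 2.} The inclusion $\iota:H^2\cap H_0^1\to L^\infty$ is bounded and linear, hence analytic, and it maps $V$ into $U$. By Theorem \ref{lemma2.2}, $J\circ\iota$ is analytic on $V$.

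\textbf{Step 3.} For $u\in V$ we have $J(u)\ge 0$, and in fact $J(u)\ge |\Omega|/2$ because $u\le 1$ would already give $\frac1{1-u}\ge \frac12$ wherever $u\ge -1$. The cleaner bound is simply $J(u)>0$, since $\frac1{1-u}>0$ on $D$. So $J(V)\subset(0,\infty)$, on which $\Phi$ is real analytic. Applying Theorem \ref{lemma2.2} once more, $E_2=\Phi\circ(J\circ\iota)$ is analytic on $V$.

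Since the sum of analytic maps is analytic, $E$ is analytic on $V$.

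The main obstacle is Step 1: verifying the hypotheses of Theorem \ref{lemma2.3}. We must check that $f$ is bounded on $\Omega\times D'$ and that the radius of convergence is uniform. The first point is why $D$ has to be kept strictly away from $1$, which forces the restriction of $V$ via the Sobolev embedding; this is the only place where $N\le3$ enters. Uniform radius holds because the singularity is at distance at least $\delta_0/2$ from $D'$. The remaining steps are routine applications of the composition theorem.
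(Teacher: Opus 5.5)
Your proposal is correct and follows essentially the paper's route. You treat $\frac12\int_\Omega|\nabla u|^2\,\mathrm{d}x$ as a continuous quadratic form, get analyticity of $u\mapsto\int_\Omega\frac{1}{1-u}\,\mathrm{d}x$ on $L^\infty$ from Theorem \ref{lemma2.3}, transfer it to a small $H^2$-ball around $\psi$ through $H^2\hookrightarrow L^\infty$, and compose with $s\mapsto\lambda(1+s)^{-1}$. For the transfer you compose with the bounded inclusion via Theorem \ref{lemma2.2}, whereas the paper proves $\|M_n\|_{\mathcal{L}_n(H^2\cap H_0^1,\mathbb{R})}\le C^n\|M_n\|_{\mathcal{L}_n(L^\infty,\mathbb{R})}$ by hand.

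Your choice $D=(-\infty,1-\delta_0/2)$ actually improves on the paper's $D=(0,1)$. Since $\psi=0$ on $\partial\Omega$ and $H^2$-neighbours of $\psi$ can take negative values, the paper's claims $\psi\in U$ and $V\subset U$ fail as written. Your $U$, by contrast, genuinely contains an $H^2$-ball around $\psi$.
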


\begin{proof}
    Define the function $f:\Omega\times D\to\mathbb{R}$ with $D=(0,1)$ and expression:
    $$f(x,u)=\frac{1}{1-u}.$$
    For every $D'\subset\subset D$, there exists $r<\inf |1-u|$ such that the expansion
    $$f(x,u+v)=\sum_{n=1}^{\infty}\frac{v^n}{(1-u)^{n+1}},\quad \forall u\in D',v\in B_r$$
    holds. Therefore, $f$ is uniformly analytic in $u\in D$ with respect to $x$.

    Let $U=\{u\in L^{\infty}(\Omega;\mathbb{R})\mid u(\Omega)\subset\subset D\ \mathrm{a.e.}\}$. By Theorem \ref{lemma2.3}, the mapping $S(u):=1+\int_{\Omega}\frac{1}{1-u(x)}\,\mathrm{d}x$ is analytic on $U$.

    Note that $\psi\in U$. Take $V$ to be a sufficiently small neighborhood of $\psi$ in $H^2\cap H_0^1(\Omega)$, so that $V\subset U$. Thus $S(u)$ is analytic on $V$.

    Indeed, since $S(u)$ is analytic on $U$, for every $u\in U$ there exist $r>0$ and a sequence of $n$-linear continuous symmetric maps $(M_n)_{n\in \mathbb{N}}$ such that:
    $$\sum_{n=1}^{\infty}\|M_n\|_{\mathcal{L}_n(L^{\infty},\mathbb{R})}r^n<\infty,\ \ \text{where}\ \|M_n\|_{\mathcal{L}_n(L^{\infty},\mathbb{R})}=\sup\{|M_n(u_1,\cdots,u_n)|\ \mid\ \|u_i\|_{L^{\infty}}\le1\},$$
    $$\forall h\in\bar{B}(0,r)\subset U,\ \ \ S(u+h)=S(u)+\sum_{n=1}^{\infty}M_n(h,\cdots,h).$$
    Since $H^2\cap H_0^1(\Omega)$ is compactly embedded in $L^\infty(\Omega)$ for $1\le N\le 3$, we have
    $$\|u\|_{L^{\infty}}\le C\|u\|_{H^2},\quad \forall u\in H^2\cap H_0^1(\Omega).$$
    For every $u\in V\subset U$, since
    $$\|M_n\|_{\mathcal{L}_n(H^2\cap H_0^1,\mathbb{R})}=\sup\{|M_n(u_1,\cdots,u_n)|\ \mid\ \|u_i\|_{H^2}\le1\},$$
    let $v_i=\frac{u_i}{\|u_i\|_{L^{\infty}}}\in H^2\cap H_0^1(\Omega)$ so that $\|v_i\|_{L^{\infty}}=1$. Then
    \begin{align*}
        \|M_n\|_{\mathcal{L}_n(H^2\cap H_0^1,\mathbb{R})}&=\sup\{|M_n(v_1,\cdots,v_n)|\|u_1\|_{L^{\infty}}\cdots\|u_n\|_{L^{\infty}}\ \mid\ \|u_i\|_{H^2}\le1\}\\
        &\le \sup\{\|M_n\|_{\mathcal{L}_n(L^{\infty},\mathbb{R})}C\|u_1\|_{H^2}\cdots C\|u_n\|_{H^2}\ \mid\ \|u_i\|_{H^2}\le1\}\\
        &\le C^n\|M_n\|_{\mathcal{L}_n(L^{\infty},\mathbb{R})}.
    \end{align*}
    Take $r_1<\frac{r}{C}$, then
    $$\sum_{n=1}^{\infty}\|M_n\|_{\mathcal{L}_n(H^2,\mathbb{R})}r_1^n< \sum_{n=1}^{\infty}\|M_n\|_{\mathcal{L}_n(L^{\infty},\mathbb{R})}r^n<\infty,$$
    and thus
    $$\forall h\in\bar{B}(0,r_1)\subset V,\ \ \ S(u+h)=S(u)+\sum_{n=1}^{\infty}M_n(h,\cdots,h).$$
    Therefore, $S(u)$ is analytic on $V$.

    Define the mapping $g:S(V)\to \mathbb{R}$ as:
    $$g(y)=\frac{1}{y}.$$
    For every $y_0\in S(V)$, we have
    $$\frac{1}{y}=\frac{1}{y_0}\cdot \frac{1}{1+\frac{y-y_0}{y_0}}=\frac{1}{y_0}\sum_{n=0}^{\infty}(-1)^n \left(\frac{y-y_0}{y_0}\right)^n,\quad\text{when}\ \left|\frac{y-y_0}{y_0}\right|<1.$$
    Therefore, $g$ is analytic on $S(V)$, and by Theorem \ref{lemma2.2}, $\frac{1}{1+\int_{\Omega}\frac{1}{1-u}\,\mathrm{d}x}$ is analytic on $V$.

    On the other hand, consider $T(u)=\frac{1}{2}\int_{\Omega}|\nabla u|^2\,\mathrm{d}x:V\to\mathbb{R}$. Its derivatives are:
    $$DT(u)(v)=\int_{\Omega}\nabla u\nabla v\,\mathrm{d}x,$$
    $$D^2T(u)(v,w)=\int_{\Omega}\nabla w\nabla v\,\mathrm{d}x.$$
    Thus for every $u\in V$, there exist $r_1>0$ and $n$-linear continuous symmetric maps $DT(u)$ and $D^2T(u)$ such that
    $$\forall h\in\bar{B}(0,r_1)\subset V,\ \ \ T(u+h)=T(u)+DT(u)(h)+D^2T(u)(h,h).$$
    Therefore, $T(u)$ is analytic on $V$.

    In conclusion, the energy functional
    $$E(u)=\frac{1}{2}\int_{\Omega}|\nabla u|^2\,\mathrm{d}x+\frac{\lambda}{1+\int_{\Omega}\frac{1}{1-u}\,\mathrm{d}x}$$
    is analytic on $V$.
\end{proof}

\subsection{Lojasiewicz-Simon Inequality}

\begin{proposition}\label{proposition2.3}
    Let $\psi$ be an equilibrium point satisfying (\ref{1.5}). Then the operator $A=D^2E(\psi):H^2\cap H_0^1(\Omega)\to L^{2}(\Omega)$ is a semi-Fredholm operator.
\end{proposition}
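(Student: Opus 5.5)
We will compute $D^2E(\psi)$ explicitly and write it as $-\Delta$ plus a compact perturbation, then invoke Remark \ref{delta} and Theorem \ref{lemma2.4}.

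\textbf{Step 1: the first variation.} Set $S(u)=1+\int_\Omega\frac{1}{1-u}\,\mathrm{d}x$. For $u$ in the neighborhood $V$ of $\psi$ from the previous proposition and $v\in H^2\cap H_0^1(\Omega)$, integration by parts gives
\begin{equation*}
DE(u)v=\int_\Omega\Bigl(-\Delta u-\frac{\lambda}{(1-u)^2S(u)^2}\Bigr)v\,\mathrm{d}x .
\end{equation*}
So the $L^2$-gradient is $\mathcal{M}(u)=-\Delta u-\lambda(1-u)^{-2}S(u)^{-2}$. Equation (\ref{1.5}) is exactly $\mathcal{M}(\psi)=0$.

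\textbf{Step 2: the second variation.} Differentiating once more at $\psi$ in direction $v$ gives
\begin{equation*}
Av:=D^2E(\psi)v=-\Delta v-\frac{2\lambda v}{(1-\psi)^3S(\psi)^2}+\frac{2\lambda}{(1-\psi)^2S(\psi)^3}\int_\Omega\frac{v}{(1-\psi)^2}\,\mathrm{d}x .
\end{equation*}
Write $A=-\Delta+G$, where $G$ is the sum of the last two terms. The coefficients $a=2\lambda(1-\psi)^{-3}S(\psi)^{-2}$ and $b=2\lambda(1-\psi)^{-2}S(\psi)^{-3}$ are bounded. Indeed, $\psi\in H^2\hookrightarrow C(\bar\Omega)$ for $N\le 3$, and $\psi<1$ on the compact set $\bar\Omega$ (with $\psi=0$ on $\partial\Omega$), so $1-\psi\ge c>0$. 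In particular $a,b\in L^\infty(\Omega)$, and $S(\psi)\ge 1$.

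\textbf{Step 3: compactness of $G$.} We check that $G:H^2\cap H_0^1(\Omega)\to L^2(\Omega)$ is compact, treating the two pieces separately.
\begin{itemize}
\item The multiplication part $v\mapsto av$ is the composition of the compact embedding $H^2\cap H_0^1\hookrightarrow L^2$ (Rellich) with multiplication by $a\in L^\infty$. The latter is bounded on $L^2$, so the composition is compact.
\item The nonlocal part $v\mapsto b\int_\Omega(1-\psi)^{-2}v\,\mathrm{d}x$ is bounded and has rank one, hence is compact.
\end{itemize}

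\textbf{Step 4: conclusion.} By Remark \ref{delta}, $-\Delta:H^2\cap H_0^1(\Omega)\to L^2(\Omega)$ is semi-Fredholm. Theorem \ref{lemma2.4} then shows that $A=-\Delta+G$ is semi-Fredholm.

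\textbf{Main obstacle.} The delicate point is Step 2: making rigorous that $D^2E(\psi)$, viewed as a map into $L^2$ rather than into $(H^2\cap H_0^1)^*$, is given by the formula above. I would justify this by showing that $\mathcal{M}:V\to L^2$ is Fr\'echet differentiable. The map $u\mapsto(1-u)^{-2}$ is $C^1$ from a neighborhood in $L^\infty$ into $L^\infty$ (a smooth Nemytskii operator on $L^\infty$). The map $u\mapsto S(u)^{-2}$ is $C^1$ from $L^\infty$ into $\mathbb{R}$, by the analyticity shown in the previous proposition. Composing with the bounded embedding $H^2\hookrightarrow L^\infty$ and then $L^\infty\hookrightarrow L^2$ gives the result, and the derivative is the displayed operator $A$.
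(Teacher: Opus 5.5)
Your proof is correct and has the same skeleton as the paper's. You write $A=-\Delta+G$, show that $G:H^2\cap H_0^1(\Omega)\to L^2(\Omega)$ is compact, and conclude with Remark \ref{delta} and Theorem \ref{lemma2.4}.

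The difference lies in how compactness of $G$ is obtained.
\begin{itemize}
\item The paper computes $\nabla G(v)$, which brings in $\nabla\psi$, proves $\|G(v)\|_{H^1}\le C\|v\|_{H^2}$, and then invokes a compact embedding into $L^2$. In doing so it asserts that $G$ maps into $H_0^1(\Omega)$. That is not quite right: on $\partial\Omega$ one has $G(v)=2\lambda S(\psi)^{-3}\int_\Omega(1-\psi)^{-2}v\,\mathrm{d}x$, which is a generally nonzero constant. The slip is harmless, since $H^1(\Omega)\hookrightarrow L^2(\Omega)$ is also compact.
\item You instead compose the compact embedding $H^2\cap H_0^1\hookrightarrow L^2$ with an $L^\infty$ multiplier. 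You treat the nonlocal term separately as a bounded rank-one operator.
\end{itemize}

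Your route is shorter and needs no information on $\nabla\psi$, only $\sup_\Omega\psi<1$. It also makes visible the structural reason the nonlocal term is harmless: it has finite rank. The paper's route yields the extra, unneeded fact that $G$ is bounded into $H^1$.

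Your Step 2 justification is more careful than the paper. The paper only defines $A$ as the weak representative of $D^2E(\psi)$, whereas you identify $A$ as the Fr\'echet derivative of the $L^2$-gradient map $\mathcal{M}:V\to L^2$, via the $C^1$ Nemytskii map on $L^\infty$ and the embedding $H^2\hookrightarrow L^\infty$. This is exactly the identity $D\mathcal{M}(0)=L$ that the paper later uses without proof in Theorem \ref{theorem2.1}.
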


\begin{proof}
    The energy functional is $E(u)=\frac{1}{2}\int_{\Omega}|\nabla u|^2\,\mathrm{d}x+\frac{\lambda}{1+\int_{\Omega}\frac{1}{1-u}\,\mathrm{d}x}$.
    By calculation, we obtain:
    \begin{equation}
        DE(u)(v)=\int_{\Omega}\nabla u\nabla v\,\mathrm{d}x-\int_{\Omega}\frac{\lambda v}{(1+\int_{\Omega}\frac{1}{1-u}\,\mathrm{d}x)^2(1-u)^2}\,\mathrm{d}x,\label{2.10}
    \end{equation}
    \begin{align*}
        D^2E(u)(v,w)=&\int_{\Omega}\nabla w\nabla v\,\mathrm{d}x+2\int_{\Omega}\frac{\lambda v}{(1+\int_{\Omega}\frac{1}{1-u}\,\mathrm{d}x)^3(1-u)^2}\left(\int_{\Omega}\frac{w}{(1-u)^2}\,\mathrm{d}x\right)\,\mathrm{d}x\\
        &-2\int_{\Omega}\frac{\lambda v}{(1+\int_{\Omega}\frac{1}{1-u}\,\mathrm{d}x)^2}\frac{w}{(1-u)^3}\,\mathrm{d}x.
    \end{align*}

    For the equilibrium point $\psi$, the operator $A:H^2\cap H_0^1(\Omega)\to L^{2}(\Omega)$ satisfies
    $$\langle Av,w \rangle = D^2E(\psi)(v,w),$$
    and in the weak sense,
    $$Av=\frac{2\int_{\Omega}\frac{v}{(1-\psi)^2}\,\mathrm{d}x\cdot\lambda}{(1+\int_{\Omega}\frac{1}{1-\psi}\,\mathrm{d}x)^3(1-\psi)^2}-\frac{2\lambda v}{(1+\int_{\Omega}\frac{1}{1-\psi}\,\mathrm{d}x)^2(1-\psi)^3}-\Delta v.$$

    Let $$G(v)=\frac{2\int_{\Omega}\frac{v}{(1-\psi)^2}\,\mathrm{d}x\cdot\lambda}{(1+\int_{\Omega}\frac{1}{1-\psi}\,\mathrm{d}x)^3(1-\psi)^2}-\frac{2\lambda v}{(1+\int_{\Omega}\frac{1}{1-\psi}\,\mathrm{d}x)^2(1-\psi)^3}.$$
    We now prove that $G:H^2\cap H_0^1(\Omega)\to H_0^1(\Omega)$ is a bounded linear operator:

    For every $v\in H^2\cap H_0^1(\Omega)$,
    \begin{align*}
        \int_{\Omega}|G(v)(x)|^2\,\mathrm{d}x=&\int_{\Omega}\frac{4\lambda^2(\int_{\Omega}\frac{v}{(1-\psi)^2}\,\mathrm{d}x)^2}{(1+\int_{\Omega}\frac{1}{1-\psi}\,\mathrm{d}x)^6(1-\psi)^4}-\frac{8\lambda^2v\int_{\Omega}\frac{v}{(1-\psi)^2}\,\mathrm{d}x}{(1+\int_{\Omega}\frac{1}{1-\psi}\,\mathrm{d}x)^5(1-\psi)^5}\\
        &+\frac{4\lambda^2v^2}{(1+\int_{\Omega}\frac{1}{1-\psi}\,\mathrm{d}x)^4(1-\psi)^6}\,\mathrm{d}x.
    \end{align*}
    Since $\psi$ satisfies (\ref{1.5}), $\|\frac{1}{1-\psi}\|_{L^\infty}$ is bounded. Moreover, for $1\le N\le 3$, $H^2\cap H_0^1(\Omega)$ is compactly embedded in $L^\infty(\Omega)$, so
    $$\int_{\Omega}\frac{v}{(1-\psi)^2}\,\mathrm{d}x\le M\|v\|_{L^\infty}\le M\|v\|_{H^2}.$$
    Thus
    $$\int_{\Omega}|G(v)(x)|^2\,\mathrm{d}x\le \int_{\Omega}\frac{M\|v\|_{H^2}^2}{(1-\psi)^4}+\frac{M\|v\|_{H^2}|v|}{(1-\psi)^5}+\frac{M v^2}{(1-\psi)^6}\,\mathrm{d}x\le C\|v\|_{H^2}^2.$$
    Furthermore,
    $$\nabla G(v)=\frac{4\lambda\int_{\Omega}\frac{v}{(1-\psi)^2}\,\mathrm{d}x}{(1+\int_{\Omega}\frac{1}{1-\psi}\,\mathrm{d}x)^3}\frac{\nabla \psi}{(1-\psi)^3}-\frac{2\lambda}{(1+\int_{\Omega}\frac{1}{1-\psi}\,\mathrm{d}x)^2}\frac{(1-\psi)\nabla v+3v\nabla\psi}{(1-\psi)^4},$$
    so
    \begin{align*}
        &\quad\int_{\Omega}|\nabla G(v)(x)|^2\,\mathrm{d}x\\
        &=\int_{\Omega}\frac{16\lambda^2(\int_{\Omega}\frac{v}{(1-\psi)^2}\,\mathrm{d}x)^2}{(1+\int_{\Omega}\frac{1}{1-\psi}\,\mathrm{d}x)^6}\frac{|\nabla \psi|^2}{(1-\psi)^6}-\frac{16\lambda^2\int_{\Omega}\frac{v}{(1-\psi)^2}\,\mathrm{d}x}{(1+\int_{\Omega}\frac{1}{1-\psi}\,\mathrm{d}x)^5}\frac{(1-\psi)\nabla\psi\nabla v+3v|\nabla \psi|^2}{(1-\psi)^7}\\
        &\quad+\frac{4\lambda^2}{(1+\int_{\Omega}\frac{1}{1-\psi}\,\mathrm{d}x)^4}\frac{|\nabla v|^2(1-\psi)^2+6v(1-\psi)\nabla\psi\nabla v+9v^2|\nabla \psi|^2}{(1-\psi)^8}\,\mathrm{d}x\\
        &\le\int_{\Omega}M\|v\|_{H^2}^2\frac{|\nabla \psi|^2}{(1-\psi)^6}+M\|v\|_{H^2}\frac{(1-\psi)|\nabla\psi\nabla v|+3|v||\nabla \psi|^2}{(1-\psi)^7}\\
        &\quad+M\frac{|\nabla v|^2(1-\psi)^2+6|v|(1-\psi)|\nabla\psi\nabla v|+9v^2|\nabla \psi|^2}{(1-\psi)^8}\,\mathrm{d}x\\
        &\le M\|v\|_{H^2}^2\int_{\Omega}\frac{|\nabla \psi|^2}{(1-\psi)^6}\,\mathrm{d}x+M\|v\|_{H^2}\left(\int_{\Omega}\frac{|\nabla\psi\nabla v|}{(1-\psi)^6}\,\mathrm{d}x+\int_{\Omega}\frac{3|v||\nabla \psi|^2}{(1-\psi)^7}\,\mathrm{d}x\right)\\
        &\quad+M\left(\int_{\Omega}\frac{|\nabla v|^2}{(1-\psi)^6}\,\mathrm{d}x+\int_{\Omega}\frac{6|v\nabla\psi\nabla v|}{(1-\psi)^7}\,\mathrm{d}x+\int_{\Omega}\frac{9v^2}{(1-\psi)^8}\,\mathrm{d}x\right)\\
        &\le M\|v\|_{H^2}^2\|\nabla \psi\|_{L^2}^2+M\|v\|_{H^2}\left(\|\nabla \psi\|_{L^2}\|\nabla v\|_{L^2}+\|v\|_{L^\infty}\|\nabla\psi\|_{L^2}^2\right)\\
        &\quad+M\left(\|\nabla v\|_{L^2}^2+\|v\|_{L^\infty}\|\nabla \psi\|_{L^2}\|\nabla v\|_{L^2}+\|v\|_{L^2}^2\right)\\
        &\le C\|v\|_{H^2}^2.
    \end{align*}
    Therefore
    $$\|G(v)\|_{H^1}^2=\int_{\Omega}|G(v)(x)|^2\,\mathrm{d}x+\int_{\Omega}|\nabla G(v)(x)|^2\,\mathrm{d}x\le C\|v\|_{H^2}^2.$$
    On the other hand, the embedding operator $I:H_0^1(\Omega)\to L^{2}(\Omega)$, $I(v)=v$ is compact. Hence $G:H^2\cap H_0^1(\Omega)\to L^{2}(\Omega)$ is a compact operator.

    By Remark \ref{delta}, $-\Delta:H^2\cap H_0^1(\Omega)\to L^{2}(\Omega)$ is a semi-Fredholm operator. Combined with Theorem \ref{lemma2.4}, $A=G-\Delta$ is a semi-Fredholm operator.
\end{proof}

\begin{theorem}\label{theorem2.1}
    Let $\mathscr{E}$ be the set of equilibrium points of equation (\ref{1.1}):
    $$\mathscr{E}=\left\{\phi\mid -\Delta\phi=\frac{\lambda}{(1-\phi)^2(1+\int_{\Omega}\frac{1}{1-\phi}\,\mathrm{d}x)^2},\  \phi|_{\partial\Omega}=0\right\},$$
    and let $\psi\in\mathscr{E}$. Then there exist $\sigma>0$ depending on $\psi$ and $\theta\in (0,\frac{1}{2})$ such that for every $u\in H^2\cap H_0^1(\Omega)$ satisfying $\|u-\psi\|_{H^2}\le\sigma$:
    \begin{equation}
        \left\|\Delta u+\frac{\lambda}{(1-u)^2(1+\int_{\Omega}\frac{1}{1-u}\,\mathrm{d}x)^2}\right\|_{L^2}\ge |E(u)-E(\psi)|^{1-\theta}.\label{2.11}
    \end{equation}
\end{theorem}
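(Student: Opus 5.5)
We will prove the inequality by the standard Lyapunov--Schmidt reduction, as in Chill's and Huang's treatment of the Łojasiewicz--Simon inequality, cited as \cite{ref13}. The inputs are the analyticity of $E$ near $\psi$ (Section 5.2) and the semi-Fredholm property of $A=D^2E(\psi)$ (Proposition \ref{proposition2.3}).

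\textbf{Setup.} Write $W=H^2\cap H_0^1(\Omega)$ and $M(u)=-\Delta u-\frac{\lambda}{(1-u)^2(1+\int_\Omega\frac{1}{1-u}\,\mathrm{d}x)^2}$. By (\ref{2.10}), $DE(u)$ is represented in $L^2$ by $M(u)$, so the left side of (\ref{2.11}) is $\|M(u)\|_{L^2}$. The map $M:V\to L^2(\Omega)$ is analytic on a small $H^2$-neighbourhood $V$ of $\psi$. This follows by the same composition argument as in Section 5.2, using $H^2\hookrightarrow L^\infty$ for $N\le 3$, analyticity of $u\mapsto (1-u)^{-2}$ as a Nemytskii map into $L^\infty$, and the scalar analytic factor. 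Moreover $DM(\psi)=A$.

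\textbf{Reduction.} The operator $A=-\Delta+G$ is symmetric in $L^2$, with $G$ compact. Hence $K=\ker A$ is finite dimensional and consists of smooth functions by elliptic regularity. Let $P$ be the $L^2$-orthogonal projection onto $K$; it is bounded on both $W$ and $L^2$. The operator $\mathcal{L}=A+P:W\to L^2$ is injective and Fredholm of index $0$, being a compact perturbation of $-\Delta$. It is therefore an isomorphism. Define $\mathcal{N}(u)=M(u)+P(u-\psi)$. Then $\mathcal{N}$ is analytic with $D\mathcal{N}(\psi)=\mathcal{L}$. The analytic inverse function theorem gives neighbourhoods $U_1\ni\psi$ in $W$ and $U_2\ni 0$ in $L^2$, together with an analytic inverse $\Psi:U_2\to U_1$ satisfying $\|\Psi(f)-\Psi(g)\|_{H^2}\le C\|f-g\|_{L^2}$.

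Fix a basis $e_1,\dots,e_m$ of $K$ and set $\Gamma(\xi)=E(\Psi(\sum_i\xi_ie_i))$ for $\xi$ near $0$ in $\mathbb{R}^m$. This is a real-analytic function of finitely many variables. A chain-rule computation, using $P\,\mathcal{N}(\Psi(\cdot))=P(\cdot)$, shows that $\nabla\Gamma(\xi)$ is controlled by $\|M(\Psi(\sum\xi_ie_i))\|_{L^2}$, and that $\xi=0$ is a critical point. Theorem \ref{L-S} then gives $|\Gamma(\xi)-\Gamma(0)|^{1-\theta}\le C|\nabla\Gamma(\xi)|$ for $|\xi|$ small, with $\Gamma(0)=E(\psi)$.

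\textbf{Transfer to general $u$.} For $u$ with $\|u-\psi\|_{H^2}\le\sigma$, set $\xi$ to be the coordinates of $P(u-\psi)$ and $v=\Psi(P(u-\psi))$. Then:
\begin{enumerate}
\item[(i)] $\|u-v\|_{H^2}\le C\|\mathcal{N}(u)-P(u-\psi)\|_{L^2}=C\|M(u)\|_{L^2}$;
\item[(ii)] $\|M(v)\|_{L^2}\le\|M(u)\|_{L^2}+C\|u-v\|_{H^2}\le C\|M(u)\|_{L^2}$, since $M$ is locally Lipschitz;
\item[(iii)] a Taylor expansion of $E$ along the segment from $v$ to $u$ gives $|E(u)-E(v)|\le C\big(\|M(u)\|_{L^2}+\|M(v)\|_{L^2}\big)\|u-v\|_{H^2}\le C\|M(u)\|_{L^2}^2$.
\end{enumerate}
Combining these,
$$|E(u)-E(\psi)|^{1-\theta}\le|E(u)-E(v)|^{1-\theta}+|\Gamma(\xi)-\Gamma(0)|^{1-\theta}\le C\|M(u)\|_{L^2}^{2(1-\theta)}+C\|M(u)\|_{L^2}.$$
Since $2(1-\theta)\ge1$ and $\|M(u)\|_{L^2}$ is small for $u$ near $\psi$, the right side is at most $C\|M(u)\|_{L^2}$.

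\textbf{Removing $C$ and the range of $\theta$.} To remove the constant $C$, we lower $\theta$ slightly and shrink $\sigma$. This uses $|E(u)-E(\psi)|\le C'\|u-\psi\|_{H^2}\le C'\sigma$, which is small. Decreasing $\theta$ also places it in the open interval $(0,\tfrac12)$ as the statement requires.

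\textbf{Main obstacle.} The hardest step is the reduction. We must check that $\mathcal{L}$ is an isomorphism $W\to L^2$, which needs the symmetry of the nonlocal term in $A$, and then do the bookkeeping linking $\nabla\Gamma$ to $\|M\|_{L^2}$. We would follow Chill's abstract framework in \cite{ref13} and check its hypotheses rather than redoing it by hand.
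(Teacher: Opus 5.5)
Your proposal is correct and follows essentially the same route as the paper. Both arguments use a Lyapunov--Schmidt reduction through $\mathcal{N}=\mathcal{M}+\Pi$ and its local analytic inverse $\Psi$, the finite-dimensional Lojasiewicz inequality for $\Gamma(\xi)=E(\Psi(\sum_j\xi_j\phi_j)+\psi)$, the transfer estimates $\|\Psi(\Pi v)-v\|_{H^2}\le C\|\mathcal{M}(v)\|_{L^2}$ and $|E(u)-\Gamma(\xi)|\le C\|\mathcal{M}(v)\|_{L^2}^2$, and a final lowering of $\theta$ with shrinking of $\sigma$ to absorb the constant. Your justification that $A+P$ is an isomorphism, with injectivity from the symmetry of $A$ and surjectivity from Fredholm index zero, supplies a step the paper only asserts.
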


\begin{proof}
    Consider the linearization near the equilibrium point $\psi$:
    \begin{equation}
        \begin{cases}
            Lw\equiv -\Delta w+\frac{2\int_{\Omega}\frac{w}{(1-\psi)^2}\,\mathrm{d}x\cdot\lambda}{(1+\int_{\Omega}\frac{1}{1-\psi}\,\mathrm{d}x)^3(1-\psi)^2}-\frac{2\lambda w}{(1+\int_{\Omega}\frac{1}{1-\psi}\,\mathrm{d}x)^2(1-\psi)^3}=0,\ x\in\Omega \\
            w |_{\partial\Omega}=0. \label{2.12}
        \end{cases}
    \end{equation}
    By Proposition \ref{proposition2.3}, $L:H^2\cap H_0^1(\Omega)\to L^{2}(\Omega)$ is a semi-Fredholm operator, so its kernel $\ker(L)$ is finite-dimensional.

    Let $\dim\ker (L)=m$, and let $(\phi_1,\cdots,\phi_m)$ be an orthonormal basis of $\ker(L)$. Let $\Pi:H^2\cap H_0^1(\Omega) \to \ker(L)\subset L^{2}(\Omega)$ be the projection operator. Define the operator $\tilde{L}:H^2\cap H_0^1(\Omega)\to L^{2}(\Omega)$ by
    $$\tilde{L}w=\Pi w+Lw,$$
    which is bijective.

    Decompose $u$ as:
    $$u=v+\psi,\quad v\in H^2\cap H_0^1(\Omega).$$
    Define the operator $\mathcal{M}:H^2\cap H_0^1(\Omega)\to L^{2}(\Omega)$:
    \begin{equation}
        \mathcal{M}(v)=-\Delta u-\frac{\lambda}{(1-u)^2(1+\int_{\Omega}\frac{1}{1-u}\,\mathrm{d}x)^2},\quad \text{where}\ u=v+\psi,\label{2.13}
    \end{equation}
    so that $D\mathcal{M}(0)=L$.

    Let
    \begin{equation}
        \mathcal{N}(v)=\mathcal{M}(v)+\Pi v,\label{2.14}
    \end{equation}
    then $D\mathcal{N}(0)=\tilde{L}$.

    Since $\tilde{L}$ is bijective, by the local inverse mapping theorem, there exist a neighborhood $W_1(0)\subset H^2\cap H_0^1(\Omega)$ of $0$ and a neighborhood $W_2(0)\subset L^{2}(\Omega)$ of $0$, and an inverse mapping
    $$\Psi:W_2(0)\to W_1(0)$$
    such that
    \begin{equation}
        \mathcal{N}(\Psi(g))=g,\quad \forall g\in W_2(0),\label{2.15}
    \end{equation}
    \begin{equation}
        \Psi(\mathcal{N}(v))=v,\quad \forall v\in W_1(0),\label{2.16}
    \end{equation}
    with
    \begin{equation}
        \|\mathcal{N}(v_1)-\mathcal{N}(v_2)\|_{L^{2}}\le C\|v_1-v_2\|_{H^2},\quad \forall v_1,v_2\in W_1(0),\label{2.17}
    \end{equation}
    \begin{equation}
        \|\Psi(g_1)-\Psi(g_2)\|_{H^2}\le C\|g_1-g_2\|_{L^{2}},\quad \forall g_1,g_2\in W_2(0),\label{2.18}
    \end{equation}
    where $C>0$ is a constant.

    Since $\frac{1}{1-u}$ and $\frac{1}{1+\int_{\Omega}\frac{1}{1-u}\,\mathrm{d}x}$ are analytic in a neighborhood of $\psi\in H^2\cap H_0^1(\Omega)$, $\mathcal{M}$ is analytic in a neighborhood of $0\in H^2\cap H_0^1(\Omega)$. Thus $\mathcal{N}:W_1(0)\to W_2(0)$ and $\Psi:W_2(0)\to W_1(0)$ are both analytic.

    Let
    $$\xi=(\xi_1,\cdots,\xi_m)\in \mathbb{R}^m,\quad \Pi v=\sum_{j=1}^{m}\xi_j \phi_j.$$
    Define $\Gamma:\mathbb{R}^m\to \mathbb{R}$ by
    $$\Gamma(\xi)=E\left(\Psi\left(\sum_{j=1}^{m}\xi_j \phi_j\right)+\psi\right),\quad \text{for sufficiently small}\ \xi\ \text{such that}\ \sum_{j=1}^{m}\xi_j \phi_j\in W_2(0).$$
    By the analyticity of $E$ and $\Psi$, $\Gamma(\xi)$ is analytic in a neighborhood of $0\in \mathbb{R}^m$.

    Differentiating (\ref{2.15}) with respect to $g$ gives:
    $$D\mathcal{N}(v)\cdot D\Psi(g)=I,$$
    i.e., $D\mathcal{N}(v)\in \mathcal{L}(H^2\cap H_0^1,L^{2})$ and $D\Psi(g)\in \mathcal{L}(L^{2},H^2\cap H_0^1)$.

    Partial derivatives of $\Gamma$:
    \begin{equation}
        \frac{\partial\Gamma}{\partial\xi_j}=DE\left(\Psi\left(\sum_{j=1}^{m}\xi_j \phi_j\right)+\psi\right)\cdot D\Psi \left(\sum_{j=1}^{m}\xi_j \phi_j\right)\cdot \phi_j,\quad j=1,\cdots,m.\label{partial}
    \end{equation}
    where
    \begin{align*}
        DE(u)\cdot v&=\int_{\Omega}\nabla u\nabla v\,\mathrm{d}x-\int_{\Omega}\frac{\lambda v}{(1+\int_{\Omega}\frac{1}{1-u}\,\mathrm{d}x)^2(1-u)^2}\,\mathrm{d}x\\
        &=\int_{\Omega}\left(-\Delta u-\frac{\lambda}{(1+\int_{\Omega}\frac{1}{1-u}\,\mathrm{d}x)^2(1-u)^2}\right)v\,\mathrm{d}x.
    \end{align*}
    Since $\psi\in\mathscr{E}$, $\xi=0$ is a critical point of $\Gamma(\xi)$.

    By the Lojasiewicz Inequality, there exist small constants $\sigma>0$ and $\theta\in (0,\frac{1}{2})$ such that
    $$|\nabla\Gamma(\xi)|\ge |\Gamma(\xi)-\Gamma(0)|^{1-\theta},$$
    i.e.,
    \begin{equation}
        |\nabla\Gamma(\xi)|\ge |\Gamma(\xi)-E(\psi)|^{1-\theta}.\label{2.19}
    \end{equation}

    From (\ref{2.13}) and (\ref{partial}),
    \begin{align*}
        \left|\frac{\partial\Gamma}{\partial\xi_j}\right|&\le \left\|\mathcal{M}\left(\Psi\left(\sum_{j=1}^{m}\xi_j \phi_j\right)\right)\right\|_{L^{2}}\left\|D\Psi \left(\sum_{j=1}^{m}\xi_j \phi_j\right)\right\|_{\mathcal{L}(L^{2},H^2\cap H_0^1)}\|\phi_j\|_{L^{2}}\\
        &\le C\left\|\mathcal{M}\left(\Psi\left(\sum_{j=1}^{m}\xi_j \phi_j\right)\right)\right\|_{L^{2}},\quad j=1,\dots,m.
    \end{align*}
    Thus
    \begin{equation}
        \begin{aligned}
            |\nabla\Gamma(\xi)|&\le C\left\|\mathcal{M}\left(\Psi\left(\Pi v\right)\right)\right\|_{L^{2}}\\
            &=C\left\|\mathcal{M}\left(\Psi\left(\Pi v\right)\right)-\mathcal{M}(v)+\mathcal{M}(v)\right\|_{L^{2}}\\
            &\le C\left(\left\|\mathcal{M}\left(\Psi\left(\Pi v\right)\right)-\mathcal{M}(v)\right\|_{L^{2}}+\|\mathcal{M}(v)\|_{L^{2}}\right),\quad v\in W_1(0).
        \end{aligned}\label{2.20}
    \end{equation}
    From (\ref{2.14}) and (\ref{2.17}),
    \begin{equation}
        \begin{aligned}
            \|\mathcal{M}(v_1)-\mathcal{M}(v_2)\|_{L^{2}}&\le \|\Pi v_1-\Pi v_2\|_{L^{2}}+\|\mathcal{N}(v_1)-\mathcal{N}(v_2)\|_{L^{2}}\\
            &\le \|v_1-v_2\|_{L^{2}}+C\|v_1-v_2\|_{H^2}\\
            &\le (C+1)\|v_1-v_2\|_{H^2},\quad v_1,v_2\in W_1(0).
        \end{aligned}\label{2.21}
    \end{equation}
    Similarly,
    \begin{equation}
        \begin{aligned}
            \|\Psi(\Pi v)-v\|_{H^2}&=\|\Psi(\Pi v)-\Psi(\mathcal{N}v)\|_{H^2}\\
            &\le C\|\Pi v-\mathcal{N}v\|_{L^{2}}\\
            &=C\|\mathcal{M}(v)\|_{L^{2}}.
        \end{aligned}\label{2.22}
    \end{equation}
    Combining (\ref{2.20})-(\ref{2.22}):
    \begin{equation}
        \begin{aligned}
            |\nabla\Gamma(\xi)|&\le C\left((C+1)\|\Psi(\Pi v)-v\|_{H^2}+\|\mathcal{M}(v)\|_{L^{2}}\right)\\
            &\le C\left((C+1)C\|\mathcal{M}(v)\|_{L^{2}}+\|\mathcal{M}(v)\|_{L^{2}}\right)\\
            &\le C\|\mathcal{M}(v)\|_{L^{2}}.
        \end{aligned}\label{2.23}
    \end{equation}

    On the other hand, for $t\in [0,1]$ and $v\in W_1(0)$, we have $v+t(\Psi(\Pi v)-v)\in W_1(0)$. Thus for $u=v+\psi$, using (\ref{2.21}) and (\ref{2.22}):
    \begin{align*}
        |E(u)-\Gamma(\xi)|&=|E(u)-E(\Psi(\Pi v)+\psi)|\\
            &=\left|\int_{0}^{1}\frac{\mathrm{d}}{\mathrm{d}t}E(u+(1-t)(\Psi(\Pi v)-v))\,\mathrm{d}t\right|\\
            &=\left|\int_{0}^{1}DE(u+(1-t)(\Psi(\Pi v)-v))\cdot (\Psi(\Pi v)-v)\,\mathrm{d}t\right|\\
            &\le \max_{0\le t\le1}\left|DE(u+(1-t)(\Psi(\Pi v)-v))\cdot (\Psi(\Pi v)-v)\right|.
    \end{align*}
    Let $z=u+(1-t)(\Psi(\Pi v)-v)$. By (\ref{2.10}):
    \begin{align*}
        &\quad DE(u+(1-t)(\Psi(\Pi v)-v))\cdot (\Psi(\Pi v)-v)\\
        &=DE(z)\cdot (\Psi(\Pi v)-v)\\
            &=\int_{\Omega}\nabla z\nabla (\Psi(\Pi v)-v)\,\mathrm{d}x-\int_{\Omega}\frac{\lambda (\Psi(\Pi v)-v)}{(1+\int_{\Omega}\frac{1}{1-z}\,\mathrm{d}x)^2(1-z)^2}\,\mathrm{d}x\\
            &=-\int_{\Omega}\Delta z\cdot (\Psi(\Pi v)-v)-\frac{\lambda (\Psi(\Pi v)-v)}{(1+\int_{\Omega}\frac{1}{1-z}\,\mathrm{d}x)^2(1-z)^2}\,\mathrm{d}x\\
            &=-\int_{\Omega}(\Psi(\Pi v)-v)\left(\Delta z-\frac{\lambda}{(1+\int_{\Omega}\frac{1}{1-z}\,\mathrm{d}x)^2(1-z)^2}\right)\,\mathrm{d}x\\
            &=-\int_{\Omega}(\Psi(\Pi v)-v)(\mathcal{M}(z-\psi))\,\mathrm{d}x.
    \end{align*}
    Therefore
    \begin{equation}
        \begin{aligned}
            |E(u)-\Gamma(\xi)|&\le \max_{0\le t\le1}\|\mathcal{M}(v+(1-t)(\Psi(\Pi v)-v))\|_{L^{2}}\|\Psi(\Pi v)-v\|_{H^{2}}\\
            &= \max_{0\le t\le1}\left\|\mathcal{M}(v+(1-t)(\Psi(\Pi v)-v))-\mathcal{M}(v)+\mathcal{M}(v)\right\|_{L^{2}}\|\Psi(\Pi v)-v\|_{H^{2}}\\
            &\le \max_{0\le t\le1}\left((1-t)(C+1)\|(\Psi(\Pi v)-v)\|_{H^{2}}+\|\mathcal{M}(v)\|_{L^{2}}\right)\|\Psi(\Pi v)-v\|_{H^{2}}\\
            &\le \left(\|\mathcal{M}(v)\|_{L^{2}}+(C+1)C\|\mathcal{M}(v)\|_{L^{2}}\right)\cdot C\|\mathcal{M}(v)\|_{L^{2}}\\
            &\le C\|\mathcal{M}(v)\|_{L^{2}}^2.
        \end{aligned}\label{2.24}
    \end{equation}

    Substituting (\ref{2.23})-(\ref{2.24}) into (\ref{2.19}) and using $|a+b|^{1-\theta}\le 2|a|^{1-\theta}+|b|^{1-\theta}$:

    For $v\in W_1(0)$,
    \begin{align*}
        C\|\mathcal{M}(v)\|_{L^{2}}&\ge |\nabla\Gamma(\xi)|\\
        &\ge |\Gamma(\xi)-E(\psi)|^{1-\theta}\\
        &\ge \frac{1}{2}\left(|E(u)-E(\psi)|^{1-\theta}-|E(u)-\Gamma(\xi)|^{1-\theta}\right)\\
        &\ge \frac{1}{2}|E(u)-E(\psi)|^{1-\theta}-\frac{1}{2}C^{1-\theta}\|\mathcal{M}(v)\|_{L^{2}}^{2(1-\theta)}.
    \end{align*}
    Thus
    $$C\|\mathcal{M}(v)\|_{L^{2}}+\frac{1}{2}C^{1-\theta}\|\mathcal{M}(v)\|_{L^{2}}^{2(1-\theta)}\ge \frac{1}{2}|E(u)-E(\psi)|^{1-\theta}.$$

    Since $\theta\in (0,\frac{1}{2})$, $2(1-\theta)> 1$, and $\|\mathcal{M}(v)\|_{L^{2}}$ is bounded, we obtain:
    $$\|\mathcal{M}(v)\|_{L^{2}}\ge C|E(u)-E(\psi)|^{1-\theta},\quad v\in W_1(0).$$

    Let $\epsilon>0$ be a small constant. We can choose $\sigma$ sufficiently small so that
    $$C|E(u)-E(\psi)|^{-\epsilon}\ge 1,\quad \|u-\psi\|_{H^2}\le \sigma.$$
    Therefore
    $$\|\mathcal{M}(v)\|_{L^{2}}\ge |E(u)-E(\psi)|^{1-\theta+\epsilon}=|E(u)-E(\psi)|^{1-\theta'},$$
    where $\|v\|_{H^2}=\|u-\psi\|_{H^2}\le \sigma$.\\
    Hence
    $$\left\|\Delta u+\frac{\lambda}{(1-u)^2(1+\int_{\Omega}\frac{1}{1-u}\,\mathrm{d}x)^2}\right\|_{L^{2}}\ge |E(u)-E(\psi)|^{1-\theta},$$
    for $\|u-\psi\|_{H^2}\le \sigma,\ \theta\in (0,\frac{1}{2})$. The theorem is proved.
\end{proof}

\begin{remark}\label{remark2.1}
    If we modify (\ref{2.11}) to
    \begin{equation}
        \left\|\Delta u+\frac{\lambda}{(1-u)^2(1+\int_{\Omega}\frac{1}{1-u}\,\mathrm{d}x)^2}\right\|_{L^{2}}\ge C|E(u)-E(\psi)|^{1-\theta}\label{2.25}
    \end{equation}
    where $C>0$ is a constant, then $\theta$ can be taken as $\frac{1}{2}$.
\end{remark}

\subsection{Asymptotic behavior}

To prove Theorem \ref{theorem1.1}, we first introduce the following lemma:

\begin{lemma}[See Theorem 6.3.2 in \cite{ref8}]\label{lemma2.5}
    Let $(H,S(t),V)$ be a gradient system. Then for every $x\in H$, the $\omega$-limit set $\omega(x)$ is a connected compact invariant set consisting entirely of fixed points of $S(t)$.
\end{lemma}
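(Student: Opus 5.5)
The plan is to derive each of the four properties of $\omega(x)$ directly from the two defining conditions of a gradient system, together with continuity of $S(t)$ and of $V$. Throughout, fix $x\in H$ and let $t_0>0$ be such that $K:=\overline{\bigcup_{t\ge t_0}S(t)x}$ is compact.

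\emph{Nonempty, compact, connected.} For $s\ge t_0$ set $K_s:=\overline{\bigcup_{t\ge s}S(t)x}$.
\begin{itemize}
\item Each $K_s$ is a closed subset of the compact set $K$, hence compact. It is nonempty.
\item The family $K_s$ decreases in $s$.
\item Since $t\mapsto S(t)x$ is continuous, $\bigcup_{t\ge s}S(t)x$ is the continuous image of the connected set $[s,\infty)$. It is therefore connected, and so is its closure $K_s$.
\item Then $\omega(x)=\bigcap_{s\ge t_0}K_s$ is a decreasing intersection of nonempty compact connected sets. It is therefore nonempty and compact by the finite intersection property.
\item For connectedness, suppose $\omega(x)=A\cup B$ with $A,B$ disjoint, closed and nonempty. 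Separate them by disjoint open sets $U\supset A$, $W\supset B$. Compactness gives some $s$ with $K_s\subset U\cup W$. Then $K_s$ meets both $U$ and $W$, contradicting its connectedness.
\end{itemize}

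\emph{Invariance.} I use the sequential description $\omega(x)=\{y:\ S(t_n)x\to y \text{ for some } t_n\to\infty\}$, together with continuity of $S(t)$ as a map on $H$ (part of the nonlinear $C_0$-semigroup structure).
\begin{itemize}
\item For $y=\lim S(t_n)x$ and $t\ge 0$, we have $S(t)y=\lim S(t+t_n)x$. Hence $S(t)\omega(x)\subset\omega(x)$.
\item For the reverse inclusion, consider the points $S(t_n-t)x$ for large $n$. They lie in the compact set $K$, so a subsequence converges to some $z$, and $z\in\omega(x)$.
\item Continuity then gives $S(t)z=\lim S(t_{n_k})x=y$. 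Hence $S(t)\omega(x)=\omega(x)$.
\end{itemize}

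\emph{Fixed points.} Here the Lyapunov functional enters.
\begin{itemize}
\item $t\mapsto V(S(t)x)$ is nonincreasing and bounded below, so it has a limit $\ell$.
\item By continuity of $V$, every $y\in\omega(x)$ satisfies $V(y)=\lim V(S(t_n)x)=\ell$.
\item By invariance, $S(t)y\in\omega(x)$ for all $t\ge0$, so $V(S(t)y)=\ell=V(y)$ for all $t>0$.
\item Condition (2) in the definition of a gradient system now says that $y$ is a fixed point of $S(t)$.
\end{itemize}

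The main obstacle I expect is not any single estimate but making sure the ingredients used implicitly are really available. These are the continuity of $x\mapsto S(t)x$ on $H$, needed for invariance, and the continuity of $V$, needed to pass to the limit. In the application these hold because $E$ is continuous on $H^2\cap H_0^1(\Omega)$ and the solution depends continuously on the data, which the contraction argument of Section 3 provides.
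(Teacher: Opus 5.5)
Your proof is correct and is essentially the standard argument behind Theorem 6.3.2 of \cite{ref8}, which the paper cites without proof: the relatively compact orbit and continuity of the semigroup make $\omega(x)$ nonempty, compact, connected and invariant, and the fixed-point property then follows because $V$ is constant on the invariant set $\omega(x)$, so condition (2) applies. One caveat concerns only your closing remark on the application, not the abstract lemma: the contraction argument of Section 3 runs in $C([0,T];L^2(\Omega))$, so it gives continuous dependence only in $L^2$, not in $H^2\cap H_0^1(\Omega)$.
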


\begin{proof}[Proof of Theorem \ref{theorem1.1}]
    In Section 5.1, we constructed the gradient system $(H^2\cap H_0^1(\Omega),S(t),E)$. By Lemma \ref{lemma2.5}, for every $\lambda>0$ and $u_0\in H^2\cap H_0^1(\Omega)$, the $\omega$-limit set $\omega(u_0)$ consists entirely of equilibrium points, i.e., there exists $\psi\in \omega(u_0)$ satisfying equation (\ref{1.2}), and there exists a sequence $t_n\to \infty$ such that $\lim_{n \to +\infty} \|u(\cdot,t_n)-\psi\|_{H^2}=0$.

    We can show that $\psi\in C^{\infty}(\Omega)$. Indeed, since for $1\le N\le 3$, $H^2\cap H_0^1(\Omega)\hookrightarrow C^{0,\alpha}(\Omega)$ for $0<\alpha<1$, we have $\psi\in C^{0,\alpha}(\Omega)$. Thus there exists $\delta\in (0,1)$ such that $\|\psi\|_{L^{\infty}}\le 1-\delta$. Therefore, for every $x,\ y\in \Omega$,
    $$\left|\frac{1}{1-\psi(x)}-\frac{1}{1-\psi(y)}\right|=\left|\frac{\psi(x)-\psi(y)}{(1-\psi(x))(1-\psi(y))}\right|\le C|\psi(x)-\psi(y)|\le C|x-y|^{\alpha},$$
    so $\frac{1}{1-\psi}\in C^{0,\alpha}(\Omega)$, and hence $f(\psi)=\frac{\lambda}{(1-\psi)^2(1+\int_{\Omega}\frac{1}{1-\psi}\,\mathrm{d}x)^2}\in C^{0,\alpha}(\Omega)$. By Schauder estimates, $\psi\in C^{2,\alpha}(\Omega)$. Iterating this process, we obtain $\psi\in C^{\infty}(\Omega)$.

    Next, we prove that $\omega(u_0)$ consists of a unique equilibrium point.

    From (\ref{2.2}), $E(u(t))$ is monotonically decreasing in $t$ and $E(u(t))\ge 0$, so
    $$\lim_{t\to\infty}E(u(t))=E_{\infty}.$$
    Moreover, for every $u,\ v\in H^2\cap H_0^1(\Omega)$ with $\|u\|_{L^{\infty}},\ \|v\|_{L^{\infty}}\le 1-\delta$,
    \begin{align*}
        |E(u)-E(v)|&=\left|\frac{1}{2}\int_{\Omega}|\nabla u|^2-|\nabla v|^2\,\mathrm{d}x+\frac{\lambda}{1+\int_{\Omega}\frac{1}{1-u}\,\mathrm{d}x}-\frac{\lambda}{1+\int_{\Omega}\frac{1}{1-v}\,\mathrm{d}x}\right|\\
        &= \left|\frac{1}{2}\int_{\Omega}|\nabla (u+v)||\nabla (u-v)|\,\mathrm{d}x+\frac{\lambda\int_{\Omega}\frac{v-u}{(1-u)(1-v)}\,\mathrm{d}x}{(1+\int_{\Omega}\frac{1}{1-u}\,\mathrm{d}x)(1+\int_{\Omega}\frac{1}{1-v}\,\mathrm{d}x)}\right|\\
        &\le \|u+v\|_{H^2}\|u-v\|_{H^2}+C\|u-v\|_{H^2},
    \end{align*}
    hence
    $$\lim_{n\to\infty}|E(u(t_n))-E(\psi)|\le \lim_{n\to\infty}(\|u(t_n)+\psi\|_{H^2}\|u(t_n)-\psi\|_{H^2}+C\|u(t_n)-\psi\|_{H^2})=0.$$
    Therefore, $$E(\psi)=E_{\infty}.$$
    Similar to (\ref{2.3}), we also have:
    $$E(\psi)+\int_{t}^{\infty}\|u_t\|_{L^2}^{2}\,\mathrm{d}\tau=E(u(t)).$$
    If there exists $t_0>0$ such that $E(u(t_0))=E(\psi)$, then $\forall\ t\ge t_0$, $u_t=0$ and $u=\psi$, proving (\ref{1.6}).

    Now consider the case where for all $t\ge 0$, $E(u(t))>E(\psi)$.

    By Theorem \ref{theorem2.1}, for $\psi\in\omega(u_0)$, there exist $\sigma>0$ and $\theta\in (0,\frac{1}{2})$ depending on $\psi$ such that when $\|u-\psi\|_{H^2}\le\sigma$:
    \begin{equation}
        \left\|\Delta u+\frac{\lambda}{(1-u)^2(1+\int_{\Omega}\frac{1}{1-u}\,\mathrm{d}x)^2}\right\|_{L^{2}}\ge |E(u)-E(\psi)|^{1-\theta}.\label{2.26}
    \end{equation}
    Note that (\ref{2.2}) is equivalent to
    \begin{equation}
        \frac{\mathrm{d}}{\mathrm{d}t}(E(u(t))-E(\psi))+\|u_t\|_{L^2}\left\|-\Delta u-\frac{\lambda}{(1-u)^2(1+\int_{\Omega}\frac{1}{1-u}\,\mathrm{d}x)^2}\right\|_{L^2}=0,\label{2.27}
    \end{equation}
    and
    \begin{equation}
        \frac{\mathrm{d}}{\mathrm{d}t}(E(u(t))-E(\psi))^{\theta}=\theta(E(u(t))-E(\psi))^{\theta-1} \frac{\mathrm{d}}{\mathrm{d}t}(E(u(t))-E(\psi)).\label{2.28}
    \end{equation}
    Substituting (\ref{2.27}) into (\ref{2.28}) yields:
    \begin{equation}
        \begin{aligned}
            &\quad\ \ \frac{\mathrm{d}}{\mathrm{d}t}(E(u(t))-E(\psi))^{\theta}\\
            &=\ -\theta(E(u(t))-E(\psi))^{\theta-1}\|u_t\|_{L^2}\left\|-\Delta u-\frac{\lambda}{(1-u)^2(1+\int_{\Omega}\frac{1}{1-u}\,\mathrm{d}x)^2}\right\|_{L^2}.
        \end{aligned}\label{2.29}
    \end{equation}
    If there exists a sufficiently large $T>0$ such that $\forall\ t>T$, $\|u-\psi\|_{H^2}<\sigma$, then by (\ref{2.26}):
    \begin{align*}
        &\quad\ \ -\frac{1}{\theta}\frac{\mathrm{d}}{\mathrm{d}t}(E(u(t))-E(\psi))^{\theta}\\
        &=\ (E(u(t))-E(\psi))^{\theta-1}\|u_t\|_{L^2}\left\|-\Delta u-\frac{\lambda}{(1-u)^2(1+\int_{\Omega}\frac{1}{1-u}\,\mathrm{d}x)^2}\right\|_{L^2}\\
        &\ge\ \|u_t\|_{L^2}.
    \end{align*}
    Therefore,
    $$\int_{T}^{+\infty}\|u_t\|_{L^2}\,\mathrm{d}\tau <+\infty.$$
    Moreover, since
    $$\|u(t)-u(s)\|_{L^2}=\left\|\int_{s}^{t}u_t\,\mathrm{d}\tau\right\|_{L^2}\le \int_{s}^{t}\|u_t\|_{L^2}\,\mathrm{d}\tau,$$
    we have
    $$\|u(t)-\psi\|_{L^2}\to 0,\quad t\to +\infty.$$
    By the relative compactness of $\bigcup_{t\ge T}u(t)$ in $H^2\cap H_0^1(\Omega)$, (\ref{1.6}) is proved.

    Now we prove the existence of $T$.

    Since in $H^2\cap H_0^1(\Omega)$, $u(\cdot,t_n)\to \psi$ and $E(u(t_n))\to E(\psi)$, for every $\epsilon>0$ with $\epsilon<\sigma$, there exists $N\in \mathbb{N}^{*}$ such that when $n\ge N$:
    \begin{equation}
        \|u(\cdot,t_n)-\psi\|_{H^2}<\frac{\epsilon}{2},\quad \frac{1}{\theta}(E(u(t_n))-E(\psi))^{\theta}<\frac{\epsilon}{2}.\label{2.30}
    \end{equation}
    For $n\ge N$, let
    $$\overline{t_n}=\sup\{t\ge t_n\ \mid\ \|u(\cdot,s)-\psi\|_{H^2}<\sigma,\quad \forall s\in[t_n,t]\}.$$
    If there exists $n_0\ge N$ such that $\overline{t_{n_0}}=+\infty$, then take $T=t_{n_0}$, proving the existence of $T$.\\
    If $\forall\ n\ge N$, $\overline{t_n}<+\infty$,
    then by Theorem \ref{theorem2.1}, for $t\in [t_n,\overline{t_n}]$,
    \begin{equation}
        \begin{aligned}
            &\quad\ \ -\frac{\mathrm{d}}{\mathrm{d}t}(E(u(t))-E(\psi))^{\theta}\\
            &=\ -\theta\frac{\,\mathrm{d}E(u(t))}{\,\mathrm{d}t}(E(u)-E(\psi))^{\theta-1}\\
            &=\ \theta \|u_t\|_{L^2}^2(E(u)-E(\psi))^{\theta-1}\\
            &=\ \theta \|u_t\|_{L^2}\left\|-\Delta u-\frac{\lambda}{(1-u)^2(1+\int_{\Omega}\frac{1}{1-u}\,\mathrm{d}x)^2}\right\|_{L^{2}}(E(u)-E(\psi))^{\theta-1}\\
            &\ge\ \theta\|u_t\|_{L^2}.
        \end{aligned}\label{2.31}
    \end{equation}
    Integrating (\ref{2.31}) with respect to $t$ gives:
    \begin{equation}
        \int_{t_n}^{\overline{t_n}}\|u_t\|_{L^2}\,\mathrm{d}\tau\le \frac{1}{\theta}(E(u(t_n))-E(\psi))^{\theta}.\label{2.32}
    \end{equation}
    Combining (\ref{2.30})-(\ref{2.32}), $\forall\ n\ge N$:
    \begin{align*}
        \|u(\overline{t_n})-\psi\|_{L^2}&\le \|u(\overline{t_n})-u(t_n)\|_{L^2}+\|u(t_n)-\psi\|_{L^2}\\
        &\le \int_{t_n}^{\overline{t_n}}\|u_t\|_{L^2}\,\mathrm{d}\tau+\|u(t_n)-\psi\|_{L^2}\\
        &\le \frac{1}{\theta}(E(u(t_n))-E(\psi))^{\theta}+\|u(t_n)-\psi\|_{H^2}\\
        &\le \epsilon.
    \end{align*}
    That is, as $n\to +\infty$, $u(\overline{t_n})\to \psi$ in $L^2$.

    By the relative compactness of $\bigcup_{t\ge t_0}u(t)$ in $H^2\cap H_0^1(\Omega)$, there exists a subsequence $u(\overline{t_n})$ (still denoted as $u(\overline{t_n})$) such that $u(\overline{t_n})\to \psi$ in $H^2\cap H_0^1(\Omega)$.

    Therefore, there exists $N'\ge N$ such that when $n\ge N'$, $\|u(\overline{t_n})-\psi\|_{H^2}<\frac{\sigma}{2}$, which contradicts the definition of $\overline{t_n}$.

    Theorem \ref{theorem1.1} is proved.
\end{proof}

\subsection{Convergence Rate of Solutions}

\begin{proof}[Proof of Theorem \ref{theorem1.2}]
First, we prove that the solution $u$ of equation (\ref{1.1}) satisfies in $L^2(\Omega)$:
\begin{itemize}
    \item When $\theta=\frac{1}{2}$: $\|u(\cdot,t)-\psi\|_{L^2}\le C_0e^{-C_1t}$ as $t\to +\infty$.
    \item When $\theta\in (0,\frac{1}{2})$: $\|u(\cdot,t)-\psi\|_{L^2}\le C(1+t)^{-\frac{\theta}{1-2\theta}}$ as $t\to +\infty$.
\end{itemize}
  Indeed, by Theorem \ref{theorem2.1} and Remark \ref{remark2.1}, when $t$ is sufficiently large:
  \begin{equation}
    \|u_t\|_{L^2}= \left\|\Delta u+\frac{\lambda}{(1-u)^2(1+\int_{\Omega}\frac{1}{1-u}\,\mathrm{d}x)^2}\right\|_{L^{2}}\ge C|E(u)-E(\psi)|^{1-\theta},\quad \theta\in\left (0,\frac{1}{2}\right ].\label{2.35}
  \end{equation}
  Let $y(t)=(E(u)-E(\psi))^{\theta}$. From (\ref{2.29}) and (\ref{2.35}):
  \begin{equation}
      \frac{\,\mathrm{d}y}{\,\mathrm{d}t}+C\|u_t\|_{L^2}\le 0.\label{2.36}
  \end{equation}
  Integrating (\ref{2.36}):
  $$\int_{t}^{+\infty}\|u_t\|_{L^2}\,\mathrm{d}\tau\le \frac{1}{C}y(t),$$
  thus
  \begin{equation}
      \|u(\cdot,t)-\psi\|_{L^2}\le\left\|\int_{t}^{+\infty}u_t\,\mathrm{d}\tau\right\|_{L^2}\le\int_{t}^{+\infty}\|u_t\|_{L^2}\,\mathrm{d}\tau\le \frac{1}{C}y(t).\label{2.37}
  \end{equation}
  Combining (\ref{2.35}) and (\ref{2.36}) yields the differential inequality:
  \begin{equation}
      \frac{\,\mathrm{d}y}{\,\mathrm{d}t}+Cy^{\frac{1-\theta}{\theta}}\le 0.\label{2.38}
  \end{equation}
  \noindent
  When $\theta=\frac{1}{2}$, exponential decay follows directly from (\ref{2.37}) and (\ref{2.38}):
  $$\|u(\cdot,t)-\psi\|_{L^2}\le C_0e^{-C_1t}.$$
  \noindent
  When $\theta\in (0,\frac{1}{2})$, consider the equation:
  \begin{equation}
      \begin{cases}
          \frac{\,\mathrm{d}z}{\,\mathrm{d}t}+Cz^{\frac{1-\theta}{\theta}}=0 \\
          z(0)=y(0)>0 \label{2.39}
      \end{cases}
  \end{equation}
  The solution to (\ref{2.39}) is:
  $$z(t)=\left(-C\frac{2\theta-1}{\theta}t+y(0)^{\frac{2\theta-1}{\theta}}\right)^{\frac{\theta}{2\theta-1}}.$$
  Let $w(t)=z(t)-y(t)$. Subtracting (\ref{2.39}) from (\ref{2.38}):
  $$\frac{\,\mathrm{d}w}{\,\mathrm{d}t}+C(z^{\frac{1-\theta}{\theta}}-y^{\frac{1-\theta}{\theta}})\ge 0.$$
  Applying the mean value theorem:
  \begin{equation}
      \begin{cases}
          \frac{\,\mathrm{d}w}{\,\mathrm{d}t}+C\frac{1-\theta}{\theta}\xi^{\frac{1-2\theta}{\theta}}w\ge 0 \\
          w(0)=0 \label{2.40}
      \end{cases}
  \end{equation}
  where $\xi$ lies between $y$ and $z$.\\
  Multiplying (\ref{2.40}) by $e^{\int_{0}^{t}C\frac{1-\theta}{\theta}\xi(\tau)^{\frac{1-2\theta}{\theta}}\,\mathrm{d}\tau}$:
  $$\frac{\mathrm{d}}{\mathrm{d}t}\left(w(t)e^{\int_{0}^{t}C\frac{1-\theta}{\theta}\xi(\tau)^{\frac{1-2\theta}{\theta}}\,\mathrm{d}\tau}\right)\ge 0,$$
  and since $w(0)=0$:
  $$w(t)e^{\int_{0}^{t}C\frac{1-\theta}{\theta}\xi(\tau)^{\frac{1-2\theta}{\theta}}\,\mathrm{d}\tau}\ge 0,$$
  thus
  $$w(t)\ge 0,$$
  and therefore
  $$\|u(\cdot,t)-\psi\|_{L^2}\le \frac{1}{C}y(t)\le \frac{1}{C}z(t)=C(1+t)^{-\frac{\theta}{1-2\theta}}.$$

  Next, we prove that in $H^2\cap H_0^1(\Omega)$, (\ref{2.33}) and (\ref{2.34}) hold for $\theta=\frac{1}{2}$ and $\theta\in (0,\frac{1}{2})$ respectively.
  Let $v=u-\psi$. Then $v$ satisfies:
  \begin{equation}
    \begin{cases}
        v_{t}-\Delta v=\frac{\lambda}{(1-u)^2(1+\int_{\Omega}\frac{1}{1-u}\,\mathrm{d}x)^2}-\frac{\lambda}{(1-\psi)^2(1+\int_{\Omega}\frac{1}{1-\psi}\,\mathrm{d}x)^2},&x\in\Omega,t>0\\
        v=0,&x\in\partial\Omega,t>0\\
        v(x,0)=u_0(x)-\psi(x),&x\in\Omega
    \end{cases}\label{4.1}
\end{equation}
  Multiplying (\ref{4.1}) by $v$ and integrating over $\Omega$:
\begin{align*}
    &\quad\ \ \frac{1}{2}\frac{\mathrm{d}}{\mathrm{d}t}\|v\|_{L^2}^2+\|\nabla v\|_{L^2}^2\\
    &\ =\int_{\Omega}\left(\frac{\lambda}{(1-u)^2(1+\int_{\Omega}\frac{1}{1-u}\,\mathrm{d}x)^2}-\frac{\lambda}{(1-\psi)^2(1+\int_{\Omega}\frac{1}{1-\psi}\,\mathrm{d}x)^2}\right)v\,\mathrm{d}x,
\end{align*}
where
\begin{align*}
    &\quad\frac{\lambda}{(1-u)^2(1+\int_{\Omega}\frac{1}{1-u}\,\mathrm{d}x)^2}-\frac{\lambda}{(1-\psi)^2(1+\int_{\Omega}\frac{1}{1-\psi}\,\mathrm{d}x)^2}\\
    &\ =\lambda\left[\frac{1}{(1-u)^2(1+\int_{\Omega}\frac{1}{1-u}\,\mathrm{d}x)^2}-\frac{1}{(1-u)^2(1+\int_{\Omega}\frac{1}{1-\psi}\,\mathrm{d}x)^2}\right.\\
    &\quad\ \left.+\frac{1}{(1-u)^2(1+\int_{\Omega}\frac{1}{1-\psi}\,\mathrm{d}x)^2}-\frac{1}{(1-\psi)^2(1+\int_{\Omega}\frac{1}{1-\psi}\,\mathrm{d}x)^2}\right]\\
    &\ =\lambda\left[\frac{1}{(1-u)^2}\frac{(1+\int_{\Omega}\frac{1}{1-\psi}\,\mathrm{d}x)^2-(1+\int_{\Omega}\frac{1}{1-u}\,\mathrm{d}x)^2}{(1+\int_{\Omega}\frac{1}{1-u}\,\mathrm{d}x)^2(1+\int_{\Omega}\frac{1}{1-\psi}\,\mathrm{d}x)^2}\right.\\
    &\quad\ \left.+\frac{1}{(1+\int_{\Omega}\frac{1}{1-\psi}\,\mathrm{d}x)^2}\frac{(1-\psi)^2-(1-u)^2}{(1-u)^2(1-\psi)^2}\right]\\
    &\ =\lambda\left[\frac{1}{(1-u)^2}\frac{(2+\int_{\Omega}\frac{1}{1-u}\,\mathrm{d}x+\int_{\Omega}\frac{1}{1-\psi}\,\mathrm{d}x)(\int_{\Omega}\frac{-v}{(1-u)(1-\psi)}\,\mathrm{d}x)}{(1+\int_{\Omega}\frac{1}{1-u}\,\mathrm{d}x)^2(1+\int_{\Omega}\frac{1}{1-\psi}\,\mathrm{d}x)^2}\right.\\
    &\quad\ \left.+\frac{1}{(1+\int_{\Omega}\frac{1}{1-\psi}\,\mathrm{d}x)^2}\frac{(2-u-\psi)v}{(1-u)^2(1-\psi)^2}\right].
\end{align*}
Since $\psi\in C^{\infty}(\Omega)$ is a steady-state solution and $\|u\|_{L^{\infty}}\le 1-\delta$, we have $\|\frac{1}{1-\psi}\|_{L^{\infty}}\le M$ and $\|\frac{1}{1-u}\|_{L^{\infty}}\le M$ for all $t\ge 0$.\\
\noindent
Thus
\begin{align*}
    &\quad\frac{1}{2}\frac{\mathrm{d}}{\mathrm{d}t}\|v\|_{L^2}^2+\|\nabla v\|_{L^2}^2\\
    &\ =\lambda\left(\int_{\Omega}\frac{v}{(1-u)^2}\frac{(2+\int_{\Omega}\frac{1}{1-u}\,\mathrm{d}x+\int_{\Omega}\frac{1}{1-\psi}\,\mathrm{d}x)(\int_{\Omega}\frac{-v}{(1-u)(1-\psi)}\,\mathrm{d}x)}{(1+\int_{\Omega}\frac{1}{1-u}\,\mathrm{d}x)^2(1+\int_{\Omega}\frac{1}{1-\psi}\,\mathrm{d}x)^2}\,\mathrm{d}x\right.\\
    &\quad\ \left.+\int_{\Omega}\frac{1}{(1+\int_{\Omega}\frac{1}{1-\psi}\,\mathrm{d}x)^2}\frac{(2-u-\psi)v^2}{(1-u)^2(1-\psi)^2}\,\mathrm{d}x\right)\\
    &\ \le C\left(\int_{\Omega}\frac{|v|}{(1-u)^2}\,\mathrm{d}x\int_{\Omega}\frac{|-v|}{(1-u)(1-\psi)}\,\mathrm{d}x+\int_{\Omega}\frac{(2-u-\psi)v^2}{(1-u)^2(1-\psi)^2}\,\mathrm{d}x\right)\\
    &\ \le C\|v\|_{L^2}^{2},
\end{align*}
yielding
\begin{equation}
    \frac{\mathrm{d}}{\mathrm{d}t}\|v\|_{L^2}^2+2\|\nabla v\|_{L^2}^2\le C\|v\|_{L^2}^{2}.\label{4.2}
\end{equation}
Multiplying (\ref{4.1}) by $v_t$ and integrating over $\Omega$:
\begin{align*}
    &\quad\frac{1}{2}\frac{\mathrm{d}}{\mathrm{d}t}\|\nabla v\|_{L^2}^2+\|v_t\|_{L^2}^2\\
    &\ =\lambda\left(\int_{\Omega}\frac{v_t}{(1-u)^2}\frac{(2+\int_{\Omega}\frac{1}{1-u}\,\mathrm{d}x+\int_{\Omega}\frac{1}{1-\psi}\,\mathrm{d}x)(\int_{\Omega}\frac{-v}{(1-u)(1-\psi)}\,\mathrm{d}x)}{(1+\int_{\Omega}\frac{1}{1-u}\,\mathrm{d}x)^2(1+\int_{\Omega}\frac{1}{1-\psi}\,\mathrm{d}x)^2}\,\mathrm{d}x\right.\\
    &\quad\ \left.+\int_{\Omega}\frac{1}{(1+\int_{\Omega}\frac{1}{1-\psi}\,\mathrm{d}x)^2}\frac{(2-u-\psi)v_t v}{(1-u)^2(1-\psi)^2}\,\mathrm{d}x\right)\\
    &\ \le C\left(\int_{\Omega}\frac{|v_t|}{(1-u)^2}\,\mathrm{d}x\int_{\Omega}\frac{|-v|}{(1-u)(1-\psi)}\,\mathrm{d}x+\int_{\Omega}\frac{(2-u-\psi)v_t v}{(1-u)^2(1-\psi)^2}\,\mathrm{d}x\right)\\
    &\ \le C\|v\|_{L^2}\|v_t\|_{L^2}\\
    &\ \le \frac{(C\|v\|_{L^2})^2}{2}+\frac{\|v_t\|_{L^2}^2}{2}.
\end{align*}
Yielding
\begin{equation}
    \frac{\mathrm{d}}{\mathrm{d}t}\|\nabla v\|_{L^2}^2+\|v_t\|_{L^2}^2\le C\|v\|_{L^2}^{2}.\label{4.3}
\end{equation}
Differentiating (\ref{4.1}) with respect to $t$:
\begin{align*}
  &\quad v_{tt}-\Delta v_t\\
  &\ =\frac{2\lambda u_t}{(1-u)^3(1+\int_{\Omega}\frac{1}{1-u}\,\mathrm{d}x)^2}-\frac{2\lambda\int_{\Omega}\frac{u_t}{(1-u)^2}\,\mathrm{d}x}{(1-u)^2(1+\int_{\Omega}\frac{1}{1-u}\,\mathrm{d}x)^3}-\frac{2\lambda \psi_t}{(1-\psi)^3(1+\int_{\Omega}\frac{1}{1-\psi}\,\mathrm{d}x)^2}\\
  &\quad\ +\frac{2\lambda\int_{\Omega}\frac{\psi_t}{(1-\psi)^2}\,\mathrm{d}x}{(1-\psi)^2(1+\int_{\Omega}\frac{1}{1-\psi}\,\mathrm{d}x)^3}.
\end{align*}
Since $\psi$ is a steady-state solution, $\psi_t=0$ and $v_t=u_t$, so
\begin{equation}
    v_{tt}-\Delta v_t=\frac{2\lambda v_t}{(1-u)^3(1+\int_{\Omega}\frac{1}{1-u}\,\mathrm{d}x)^2}-\frac{2\lambda\int_{\Omega}\frac{v_t}{(1-u)^2}\,\mathrm{d}x}{(1-u)^2(1+\int_{\Omega}\frac{1}{1-u}\,\mathrm{d}x)^3}.\label{4.4}
\end{equation}
Multiplying (\ref{4.4}) by $v_t$ and integrating over $\Omega$:
\begin{align*}
    &\quad \frac{1}{2}\frac{\mathrm{d}}{\mathrm{d}t}\|v_t\|_{L^2}^2+\|\nabla v_t\|_{L^2}^2\\
    &\ =\frac{2\lambda}{(1+\int_{\Omega}\frac{1}{1-u}\,\mathrm{d}x)^2}\int_{\Omega}\frac{v_{t}^{2}}{(1-u)^3}\,\mathrm{d}x-\frac{2\lambda(\int_{\Omega}\frac{v_t}{(1-u)^2}\,\mathrm{d}x)^2}{(1+\int_{\Omega}\frac{1}{1-u}\,\mathrm{d}x)^3}\\
    &\le \frac{2\lambda}{(1+\int_{\Omega}\frac{1}{1-u}\,\mathrm{d}x)^2}\int_{\Omega}\frac{v_{t}^{2}}{(1-u)^3}\,\mathrm{d}x\\
    &\le C_2\|v_t\|_{L^2}^{2},
\end{align*}
yielding
\begin{equation}
    \frac{1}{4C_2}\frac{\mathrm{d}}{\mathrm{d}t}\|v_t\|_{L^2}^2\le \frac{1}{2}\|v_t\|_{L^2}^{2}.\label{4.5}
\end{equation}
Adding (\ref{4.2}), (\ref{4.3}) and (\ref{4.5}):
$$\frac{\mathrm{d}}{\mathrm{d}t}\left(\|v\|_{L^2}^2+\|\nabla v\|_{L^2}^2+\frac{1}{4C_2}\|v_t\|_{L^2}^2\right)+2\|\nabla v\|_{L^2}^2+\frac{1}{2}\|v_t\|_{L^2}^2\le C\|v\|_{L^2}^{2},$$
thus
$$\frac{\mathrm{d}}{\mathrm{d}t}\left(\|v\|_{L^2}^2+\|\nabla v\|_{L^2}^2+\frac{1}{4C_2}\|v_t\|_{L^2}^2\right)+\|v\|_{L^2}^{2}+2\|\nabla v\|_{L^2}^2+\frac{1}{2}\|v_t\|_{L^2}^2\le (C+1)\|v\|_{L^2}^{2}.$$
Note there exists $\alpha>0$ such that
$$\|v\|_{L^2}^{2}+2\|\nabla v\|_{L^2}^2+\frac{1}{2}\|v_t\|_{L^2}^2\ge \alpha\left(\|v\|_{L^2}^2+\|\nabla v\|_{L^2}^2+\frac{1}{4C_2}\|v_t\|_{L^2}^2\right),$$
therefore
$$\frac{\mathrm{d}}{\mathrm{d}t}\widetilde{y}(t)+\alpha\widetilde{y}(t)\le C\|v\|_{L^2}^{2},$$
where $\widetilde{y}(t)=\|v\|_{L^2}^2+\|\nabla v\|_{L^2}^2+\frac{1}{4C_2}\|v_t\|_{L^2}^2$, yielding
\begin{equation}
    \frac{\,\mathrm{d}\widetilde{y}}{\,\mathrm{d}t}+\alpha\widetilde{y}\le C\|v\|_{L^2}^{2}.\label{4.6}
\end{equation}
When $\theta\in (0,\frac{1}{2})$, we have already shown $\|v\|_{L^2}\le C(1+t)^{-\frac{\theta}{1-2\theta}}$, so
$$\frac{\,\mathrm{d}\widetilde{y}}{\,\mathrm{d}t}+\alpha\widetilde{y}\le C(1+t)^{-\frac{2\theta}{1-2\theta}}.$$
Thus
\begin{align*}
    \widetilde{y}(t)&\le Ce^{-\alpha t}+Ce^{-\alpha t}\int_{0}^{t}(1+\tau)^{-\frac{2\theta}{1-2\theta}}e^{\alpha \tau}\,\mathrm{d}\tau\\
    &\le Ce^{-\alpha t}+Ce^{-\alpha t}\left[\int_{0}^{\frac{t}{2}}(1+\tau)^{-\frac{2\theta}{1-2\theta}}\,\mathrm{d}\tau\ e^{\frac{\alpha t}{2}}+(1+\frac{t}{2})^{-\frac{2\theta}{1-2\theta}}\int_{\frac{t}{2}}^{t}e^{\alpha\tau}\,\mathrm{d}\tau\right]\\
    &\le Ce^{-\alpha t}+Ce^{-\frac{\alpha t}{2}}\int_{0}^{\frac{t}{2}}(1+\tau)^{-\frac{2\theta}{1-2\theta}}\,\mathrm{d}\tau +Ce^{-\alpha t}(1+\frac{t}{2})^{-\frac{2\theta}{1-2\theta}}\left[\frac{1}{\alpha}(e^{\alpha t}-e^{\frac{\alpha t}{2}})\right]\\
    &\le C(1+t)^{-\frac{2\theta}{1-2\theta}}.
\end{align*}
Hence
$$\|\nabla v\|_{L^2}\le C(1+t)^{-\frac{\theta}{1-2\theta}},\quad \|v_t\|_{L^2}\le C(1+t)^{-\frac{\theta}{1-2\theta}},$$
and therefore
\begin{align*}
    \|v\|_{H^2}&\le C\|\Delta v\|_{L^2}\\
    &\le C\left(\|v_t\|_{L^2}+\left\|\frac{\lambda}{(1-u)^2(1+\int_{\Omega}\frac{1}{1-u}\,\mathrm{d}x)^2}-\frac{\lambda}{(1-\psi)^2(1+\int_{\Omega}\frac{1}{1-\psi}\,\mathrm{d}x)^2}\right\|_{L^2}\right)\\
    &\le C\|v_t\|_{L^2}+C\left\|\frac{1}{(1-u)^2}\int_{\Omega}\frac{-v}{(1-u)(1-\psi)}\,\mathrm{d}x\right\|_{L^2}+C\left\|\frac{(2-u-\psi)v}{(1-u)^2(1-\psi)^2}\right\|_{L^2}\\
    &\le C\|v_t\|_{L^2}+C\|v\|_{L^2}\\
    &\le C(1+t)^{-\frac{\theta}{1-2\theta}}.
\end{align*}
When $\theta=\frac{1}{2}$, we have $\|v\|_{L^2}\le C_0 e^{-C_1 t}$, so
$$\frac{\,\mathrm{d}\widetilde{y}}{\,\mathrm{d}t}+\alpha\widetilde{y}\le Ce^{-2C_1 t}.$$
Thus
\begin{align*}
    \widetilde{y}(t)&\le Ce^{-\alpha t}+Ce^{-\alpha t}\int_{0}^{t}e^{(\alpha-2C_1)\tau}\,\mathrm{d}\tau\\
    &= Ce^{-\alpha t}+Ce^{-\alpha t}\frac{1}{\alpha-2C_1}\left(e^{(\alpha-2C_1)t}-1\right)\\
    &\le Ce^{-\alpha t}+Ce^{-2C_1 t}+Ce^{-\alpha t}\\
    &\le Ce^{-\min\{\alpha,2C_1\}t}.
\end{align*}
Hence
$$\|\nabla v\|_{L^2}\le Ce^{-\min\{\frac{\alpha}{2},C_1\}t},\quad \|v_t\|_{L^2}\le Ce^{-\min\{\frac{\alpha}{2},C_1\}t},$$
and similarly
$$\|v\|_{H^2}\le C\|\Delta v\|_{L^2}\le C\|v_t\|_{L^2}+C\|v\|_{L^2}\le Ce^{-\min\{\frac{\alpha}{2},C_1\}t}.$$
The theorem is proved.
\end{proof}
	
\section{Numerical experiments}

\qquad In the reference \cite{ref5}, N. I. Kavallaris et al. plotted solutions of the equation in one dimension and radially symmetric solutions on a circular domain in two dimensions. In this section, besides reproducing these plots, we will also visualize general solutions on both circular and square domains in 2D. We will observe the asymptotic behavior of solutions, focusing on their evolution with respect to $\lambda$, and investigate whether solutions exhibit dichotomy with respect to the voltage $\lambda$.

\subsection{One-Dimensional Case}
\qquad We first study the numerical solution of the nonlocal parabolic MEMS equation in one-dimensional space.

Consider the equation:
\begin{equation}
    \begin{cases}
        u_{t}=u_{xx}+\frac{\lambda}{(1-u)^2(1+\int_{0}^{1}\frac{1}{1-u}\,\mathrm{d}x)^2},&x\in (0,1),t>0\\
        u(0,t)=0,\quad u(1,t)=0,&t>0\\
        u(x,0)=0,&x\in(0,1)
    \end{cases}\label{3.1}
\end{equation}

Reference \cite{ref5} reports a critical voltage $\lambda^{*}\approx 8.533$. We compare solutions for $\lambda=8.53$ and $\lambda=8.54$.

\begin{figure}[H]
    \centering
    \begin{minipage}[b]{0.45\textwidth}
      \includegraphics[width=\textwidth]{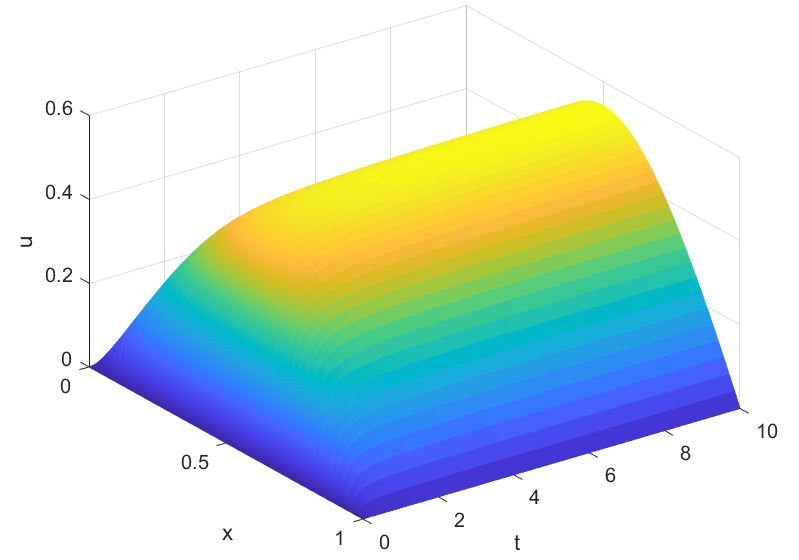}
      \caption{Solution of equation (\ref{3.1}) for $\lambda=8.53$ in $t\in[0,10]$}
      \label{fig:image1}
    \end{minipage}
    \hfill
    \begin{minipage}[b]{0.45\textwidth}
      \includegraphics[width=\textwidth]{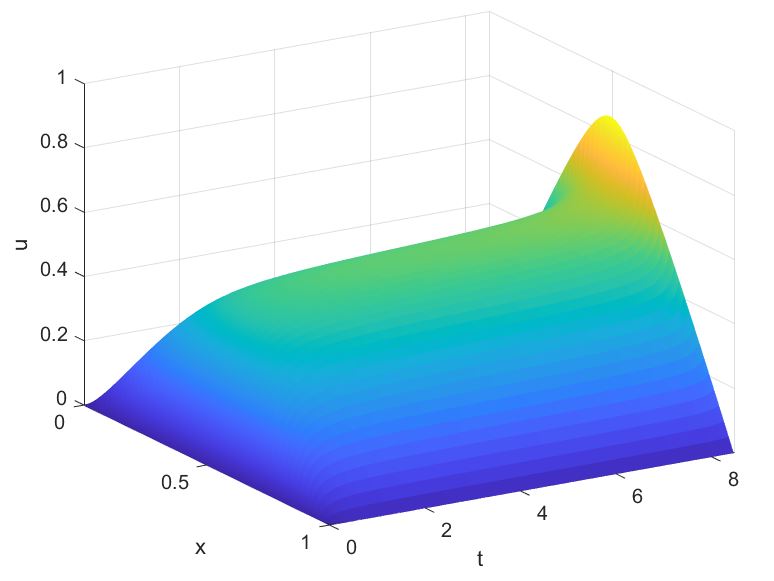}
      \caption{Solution of equation (\ref{3.1}) for $\lambda=8.54$ in $t\in[0,8.5]$}
      \label{fig:image2}
    \end{minipage}
  \end{figure}

Figure \ref{fig:image1} and \ref{fig:image2} show that when $\lambda=8.53$, the solution converges monotonically as $t\to +\infty$, while for $\lambda=8.54$, the solution quenches in finite time.

The maximum solution value occurs near $x=0.5$. Fixing $x=0.5$, we plot the solution value at this point over time for different $\lambda$:

\begin{figure}[H]
    \centering
    \includegraphics[width=10cm,height=6cm]{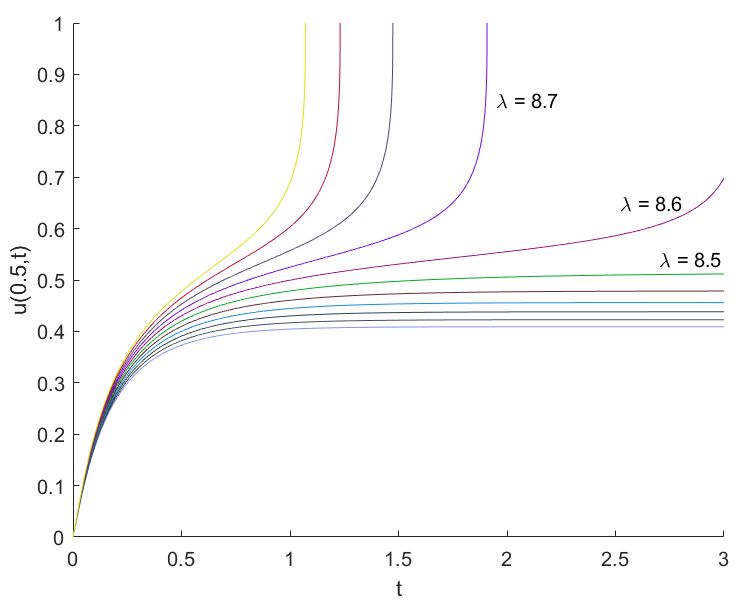}
    \caption{Solution values at $x=0.5$ for $\lambda=8$ to $9$ in equation (\ref{3.1})}\label{fig:image3}
\end{figure}

Figure \ref{fig:image3} reveals dichotomy with respect to $\lambda$ at $x=0.5$. The critical voltage $\lambda^{*}$ lies between 8.5 and 8.6, consistent with reference \cite{ref1}.

Fixing $t=10$, we plot solutions of equation (\ref{3.1}) for different $\lambda(<\lambda^{*})$:

\begin{figure}[H]
    \centering
    \includegraphics[width=10cm,height=6cm]{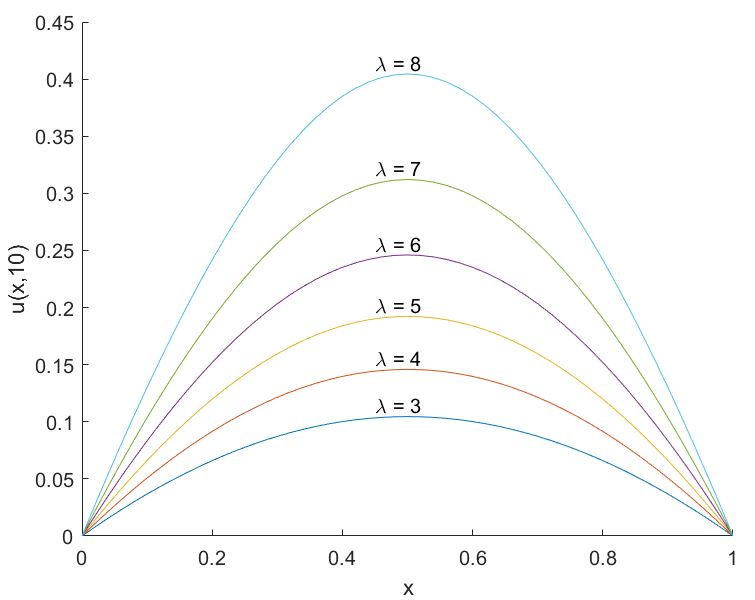}
    \caption{Solutions of equation (\ref{3.1}) at $t=10$ for $\lambda=3$ to $8$}\label{fig:image12}
\end{figure}

For $\lambda<\lambda^{*}$, the solution $u(\cdot,10)$ increases monotonically with $\lambda$.

\subsection{Two-Dimensional Case}

\subsubsection{Circular Domain}
\qquad Let $\Omega=B(0,1)$. First, we study radially symmetric solutions of equation (\ref{1.1}) in 2D, which simplifies:
\begin{equation}
    \begin{cases}
        u_{t}=u_{rr}+\frac{1}{r}u_r+\frac{\lambda}{(1-u)^2(1+2\pi\int_{0}^{1}\frac{r}{1-u}\,\mathrm{d}r)^2},\quad 0<r<1,\ t>0\\
        u_r(0,t)=0,\quad u(1,t)=0,\\
        u(r,0)=0.
    \end{cases}\label{3.3}
\end{equation}
Solutions for $\lambda=22$ and $\lambda=22.5$ are plotted below:

\begin{figure}[H]
    \centering
    \begin{minipage}[b]{0.45\textwidth}
      \includegraphics[width=\textwidth]{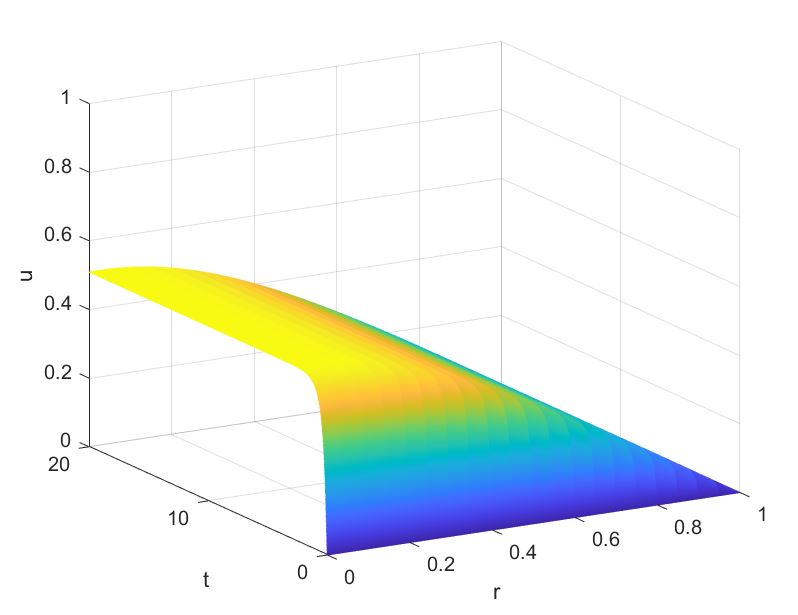}
      \caption{Solution of equation (\ref{3.3}) for $\lambda=22$ in $r\in [0,1],\ t\in [0,20]$}\label{fig:image13}
    \end{minipage}
    \hfill
    \begin{minipage}[b]{0.45\textwidth}
      \includegraphics[width=\textwidth]{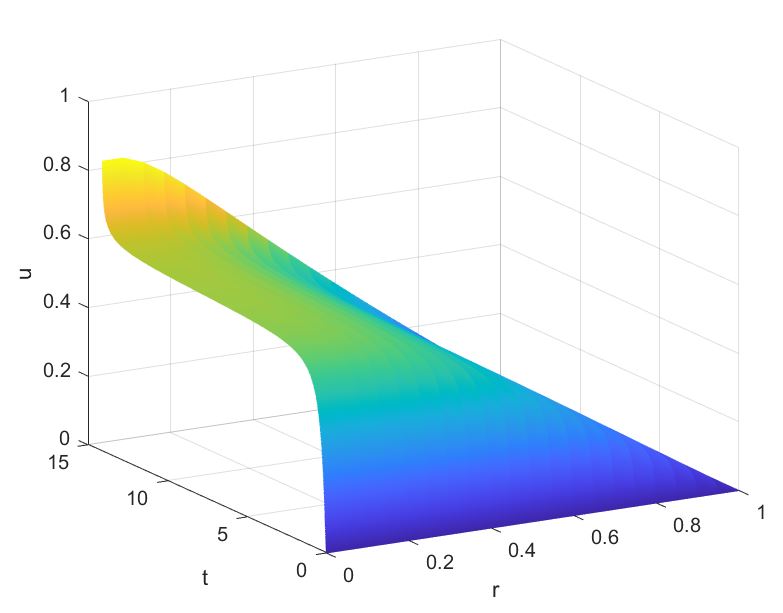}
      \caption{Solution of equation (\ref{3.3}) for $\lambda=22.5$ in $r\in [0,1],\ t\in [0,14]$}\label{fig:image8}
    \end{minipage}
  \end{figure}

Figure \ref{fig:image13} shows convergence as $t\to +\infty$ for $\lambda=22$, while Figure \ref{fig:image8} shows finite-time quenching for $\lambda=22.5$.

The maximum occurs at $r=0$. Fixing $r=0$, we plot solution values at this point:

\begin{figure}[H]
    \centering
    \includegraphics[width=10cm,height=6cm]{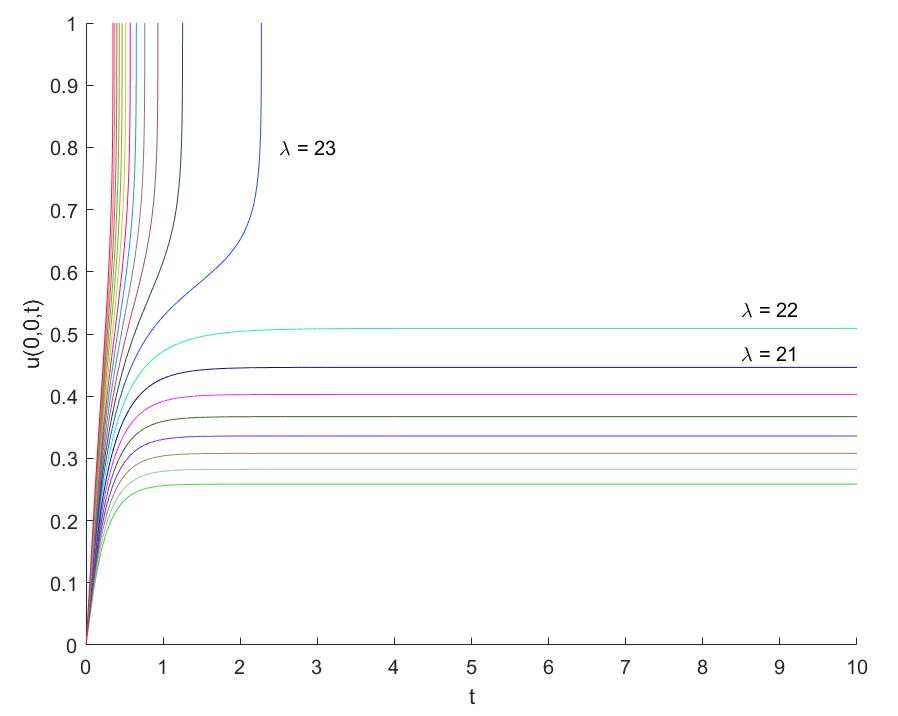}
    \caption{Solution values at $r=0$ for $\lambda=15$ to $34$ in equation (\ref{3.3})}\label{fig:image10}
\end{figure}

Figure \ref{fig:image10} shows dichotomy with respect to $\lambda$ at $r=0$, suggesting $\lambda^{*}\in(22,23)$.

Fixing $t=10$, we plot solutions for $\lambda(<\lambda^{*})$:

\begin{figure}[H]
    \centering
    \includegraphics[width=10cm,height=6cm]{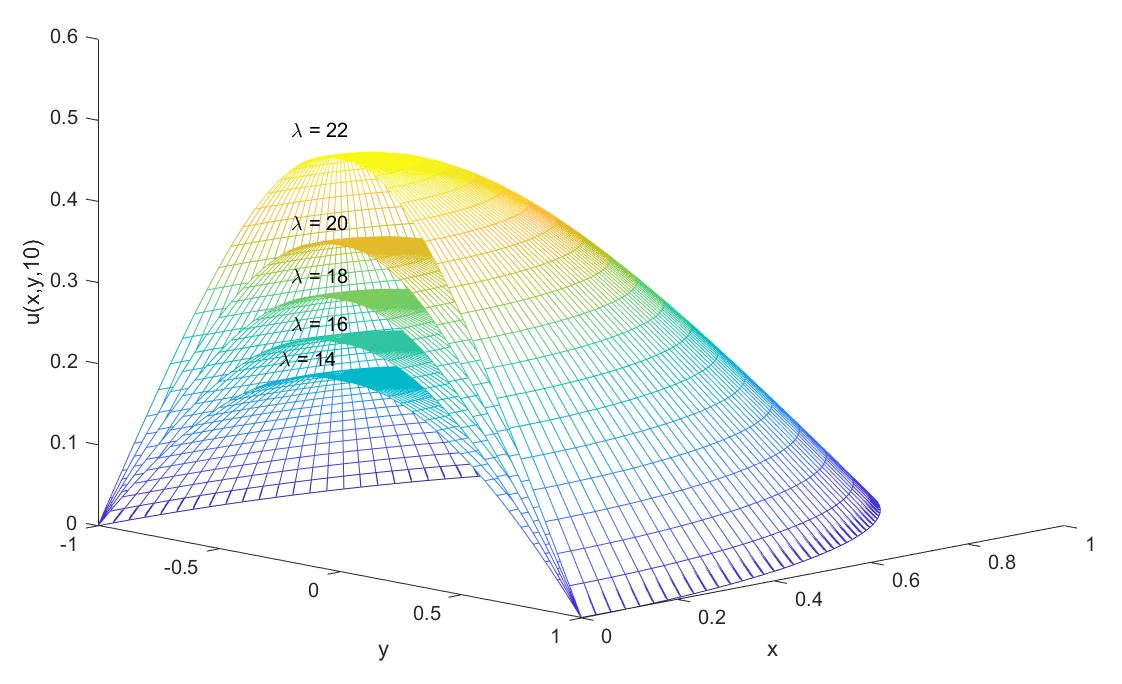}
    \caption{Solutions of equation (\ref{3.3}) at $t=10$ for $\lambda=14$ to $22$}\label{fig:image14}
\end{figure}

For $\lambda<\lambda^{*}$, $u(\cdot,10)$ increases monotonically with $\lambda$.

Next, we consider non-radially symmetric solutions of equation (\ref{1.1}):
\begin{equation}
    \begin{cases}
        u_{t}=\Delta u + \frac{\lambda}{(1-u)^2(1+\int_{B(0,1)}\frac{1}{1-u}\,\mathrm{d}x\,\mathrm{d}y)^2},&(x,y)\in B(0,1),t>0\\
        u=0,&(x,y)\in\partial B(0,1),t>0\\
        u(x,y,0)=u_0(x,y),&(x,y)\in B(0,1)
    \end{cases}\label{3.4}
\end{equation}
We choose a non-radially symmetric initial value in $H^2\cap H_0^1(\Omega)$:
$$u_0(x,y)=100\ (1-x^2-y^2)^3\ x^2\ y^2.$$
Solutions for $\lambda=20$ are shown below:

\begin{figure}[H]
    \centering
    \includegraphics[width=15cm,height=5cm]{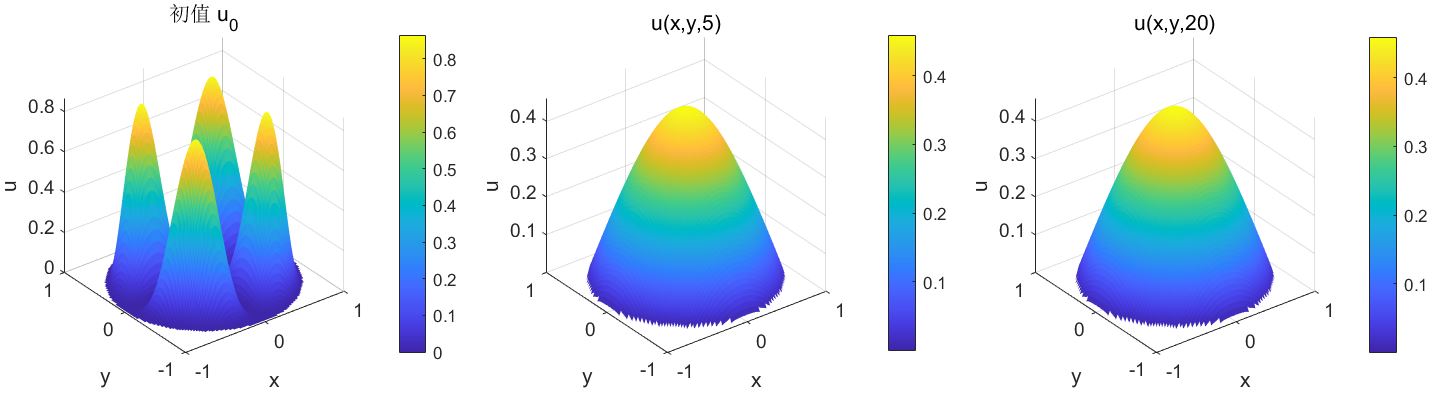}
    \caption{Initial value $u_0$ and solutions at $t=5,20$ for $\lambda=20$ in equation (\ref{3.4})}\label{fig:image18}
\end{figure}

Figure \ref{fig:image18} confirms global existence and convergence as $t\to\infty$ for $\lambda=20$. Solutions for different $\lambda$:

\begin{figure}[H]
    \centering
    \includegraphics[width=15cm,height=5cm]{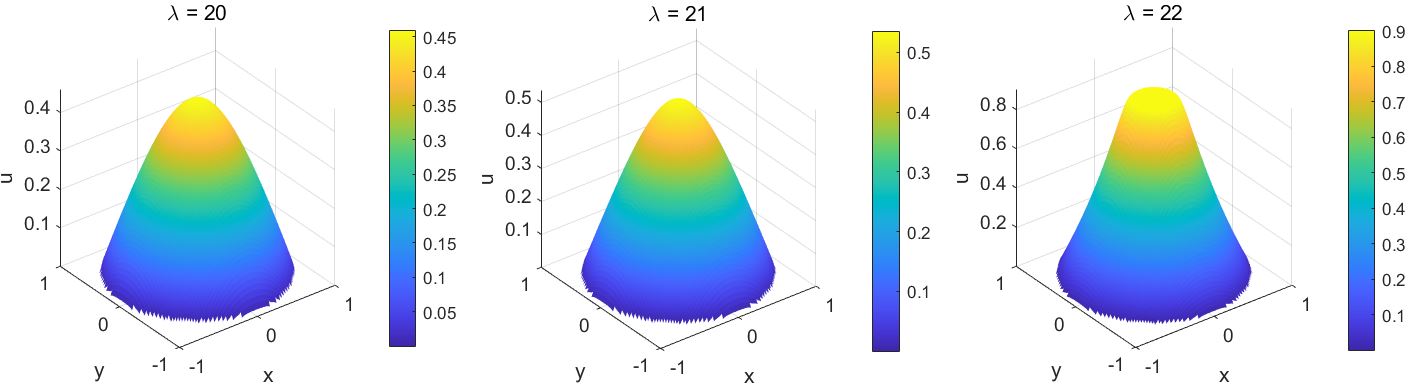}
    \caption{Solutions $u(x,y,5)$ for $\lambda=20$ to $22$ in equation (\ref{3.4})}\label{fig:image16}
\end{figure}

\begin{figure}[H]
    \centering
    \includegraphics[width=10cm,height=6cm]{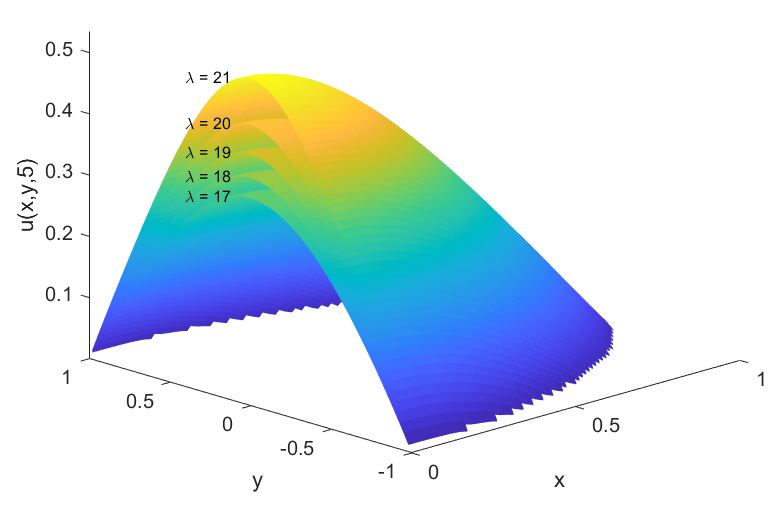}
    \caption{Comparison of $u(x,y,5)$ for $\lambda=17$ to $21$}\label{fig:image17}
\end{figure}

Figure \ref{fig:image16} and \ref{fig:image17} show that $u(\cdot,5)$ increases with $\lambda$, and quenching occurs when $\lambda$ exceeds a critical value.

\subsubsection{Square Domain}
\qquad Let $\Omega=[0,1]\times [0,1]$. We study solutions of equation (\ref{1.1}):
\begin{equation}
    \begin{cases}
        u_{t}=\Delta u + \frac{\lambda}{(1-u)^2(1+\iint\limits_{\Omega}\frac{1}{1-u}\,\mathrm{d}x\,\mathrm{d}y)^2},&(x,y)\in\Omega,t>0\\
        u=0,&(x,y)\in\partial\Omega,t>0\\
        u(x,y,0)=0,&(x,y)\in\Omega
    \end{cases}\label{3.2}
\end{equation}

Solutions for $\lambda=10$ and $\lambda=16$:

\begin{figure}[H]
    \centering
    \includegraphics[width=13cm,height=5cm]{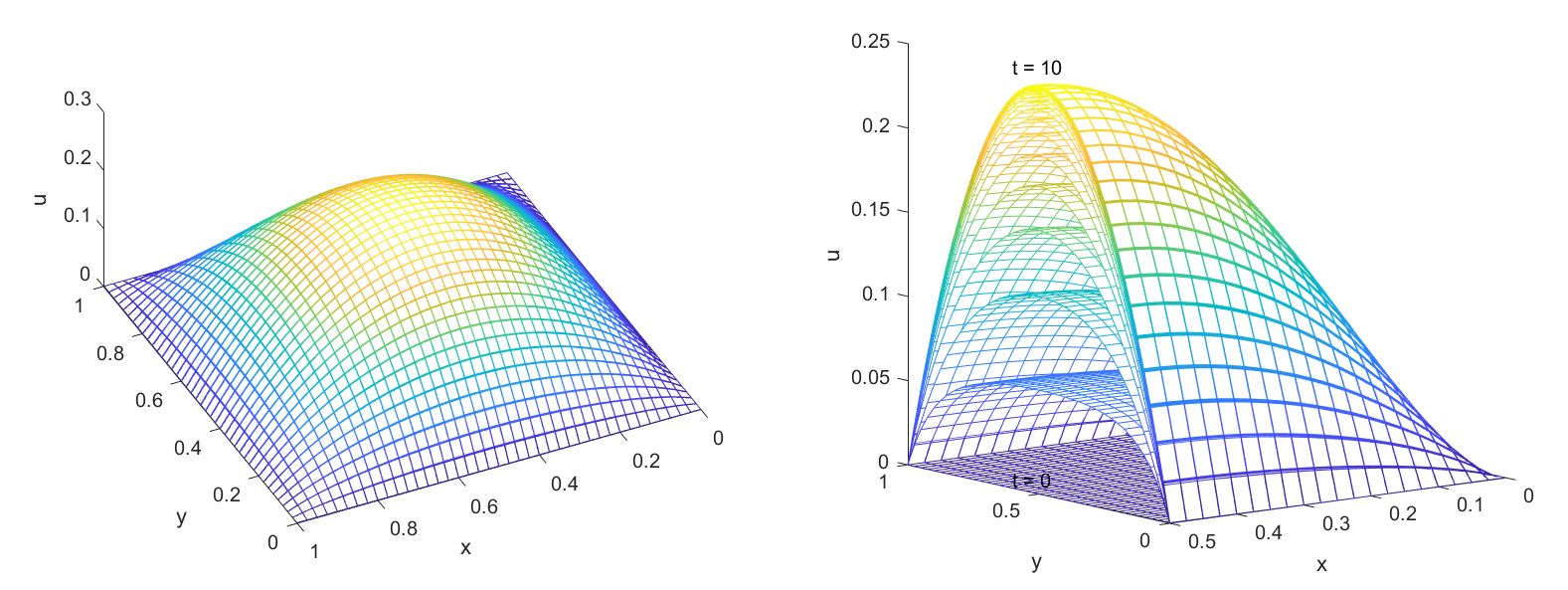}
    \caption{Solution of equation (\ref{3.2}) for $\lambda=10$ in $t\in [0,10]$}\label{fig:image4}
\end{figure}

\begin{figure}[H]
    \centering
    \includegraphics[width=10cm,height=6cm]{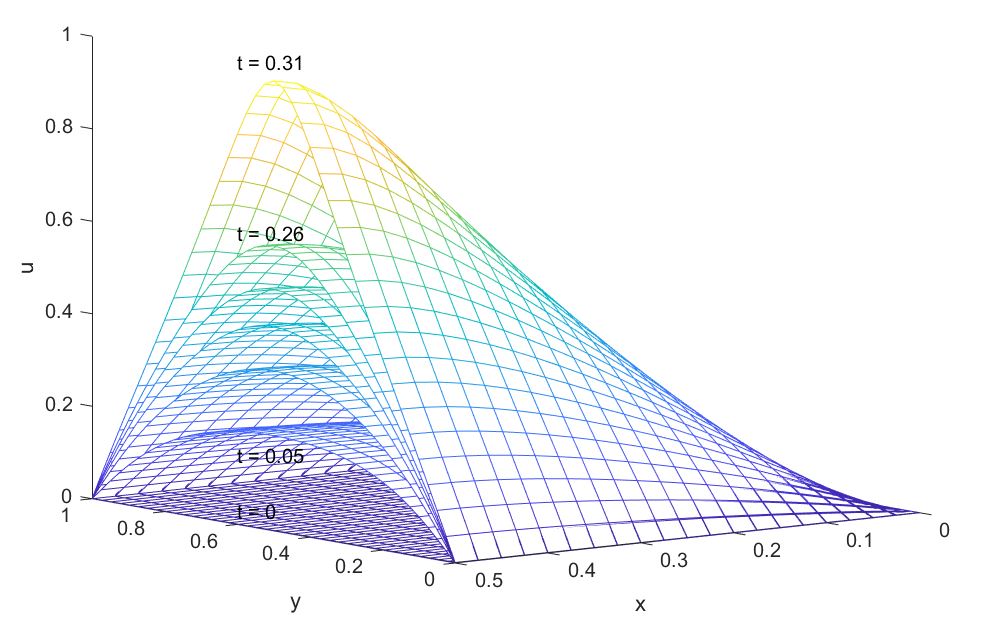}
    \caption{Solution of equation (\ref{3.2}) for $\lambda=16$ in $t\in [0,0.31]$}\label{fig:image5}
\end{figure}

Figure \ref{fig:image4} shows convergence for $\lambda=10$, while Figure \ref{fig:image5} shows quenching for $\lambda=16$. Asymptotic behavior for different $\lambda$:

\begin{figure}[H]
    \centering
    \includegraphics[width=10cm,height=6cm]{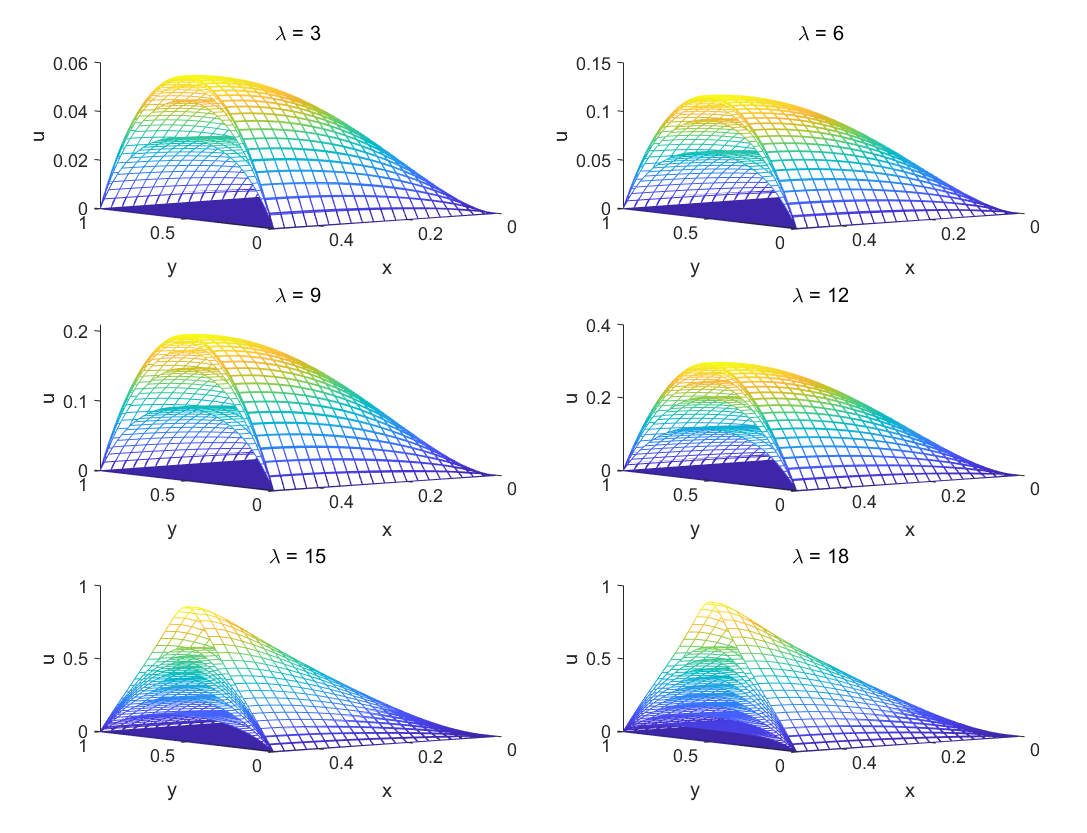}
    \caption{Solution evolution for different $\lambda$ in equation (\ref{3.2})}\label{fig:image7}
\end{figure}

Figure \ref{fig:image7} confirms convergence for small $\lambda$ and finite-time quenching for large $\lambda$.

The maximum occurs near $(0.5,0.5)$. Fixing this point, we plot solution values:

\begin{figure}[H]
    \centering
    \includegraphics[width=10cm,height=6cm]{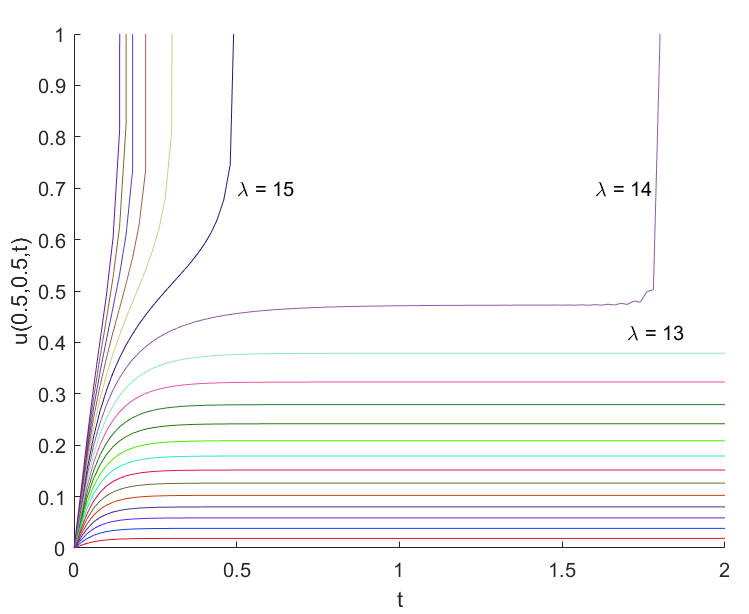}
    \caption{Solution values at $(0.5,0.5)$ for $\lambda=1$ to $20$ in equation (\ref{3.2})}\label{fig:image6}
\end{figure}

Figure \ref{fig:image6} shows dichotomy with respect to $\lambda$ at $(0.5,0.5)$, suggesting $\lambda^{*}\in(13,14)$.

Fixing $t=1$, we plot solutions for $\lambda(<\lambda^{*})$:

\begin{figure}[H]
    \centering
    \includegraphics[width=10cm,height=6cm]{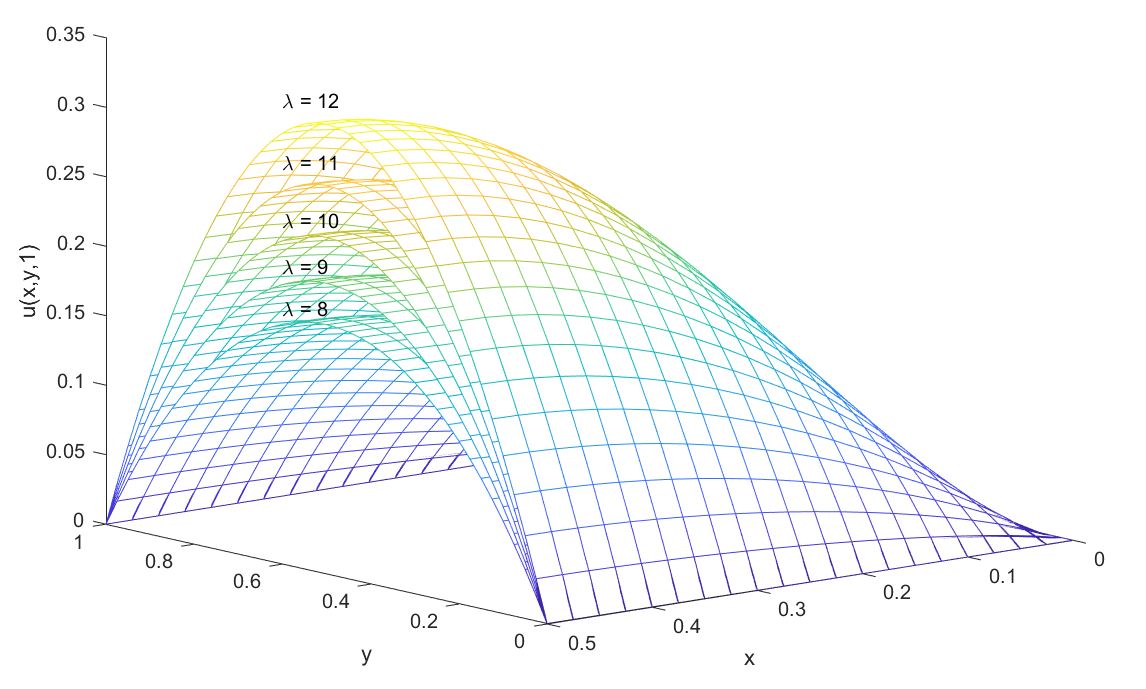}
    \caption{Solutions of equation (\ref{3.2}) at $t=1$ for $\lambda=8$ to $12$}\label{fig:image15}
\end{figure}

For $\lambda<\lambda^{*}$, $u(\cdot,1)$ increases monotonically with $\lambda$.

\subsection{Conjectures}

Based on these numerical experiments, we propose:

\begin{conjecture}\label{conjecture3.1}
    Let $\Omega\subset \mathbb{R}^N(1\le N\le 3)$ be a bounded domain with smooth boundary. For some $u_0\in H^2\cap H_0^1(\Omega)$, there exists $\lambda^{*}>0$ such that:
    \begin{itemize}
        \item For $\lambda<\lambda^{*}$, equation (\ref{1.1}) has a unique global solution $u_{\lambda}$ converging to a steady state as $t\to \infty$.
        \item For $\lambda>\lambda^{*}$, the solution quenches in finite time and the steady-state equation (\ref{1.4}) has no solution.
    \end{itemize}
\end{conjecture}

\begin{conjecture}
    For $\lambda<\lambda^{*}$, let $u_{\lambda}$ be defined as in Conjecture \ref{conjecture3.1}. Then $\forall\ t<\infty$, the mapping $\lambda\longmapsto u_{\lambda}(\cdot,t)$ is monotonically increasing in $H^2\cap H_0^1(\Omega)$.
\end{conjecture}

\end{document}